\documentclass[10pt,a4paper,reqno]{amsart}
\usepackage[T1]{fontenc}
\usepackage[utf8]{inputenc}
\usepackage{graphicx}
\usepackage{latexsym}
\usepackage{amssymb}
\usepackage{amsmath}
\usepackage{enumerate}

\usepackage{amsmath}
\usepackage{stmaryrd}
\usepackage{hyperref}
\usepackage{amssymb}
\usepackage{CJK}
\usepackage{color}
\usepackage{tikz}
\usepackage{amsmath,latexsym,amssymb,amsthm,array,amsfonts}
\usepackage{mathtools, stmaryrd}
\usepackage{xparse} \DeclarePairedDelimiterX{\Iintv}[1]{\llbracket}{\rrbracket}{\iintvargs{#1}}
\NewDocumentCommand{\iintvargs}{>{\SplitArgument{1}{,}}m}
{\iintvargsaux#1} %
\NewDocumentCommand{\iintvargsaux}{mm} {#1\mkern1.5mu..\mkern1.5mu#2}
\usepackage{mathrsfs,dsfont,bbm}
\usepackage{csquotes}
\usepackage{layout}
\usepackage{layout}
\newtheorem{theorem}{Theorem}[section]

\newtheorem{lemma}[theorem]{Lemma}
\newtheorem{definition}[theorem]{Definition}
\newtheorem{corollary}[theorem]{Corollary}
\newtheorem{proposition}[theorem]{Proposition}
\newtheorem{remark}[theorem]{Remark}

\def\real{{\mathord{{\rm I\kern-2.8pt R}}}}        
\def\inte{{\mathord{{\rm I\kern-2.8pt N}}}}

\def\sZZ{{\rm Z\kern-2.8ptem{}Z}}

\def\z{{\mathchoice
  {\sZZ}
  {\sZZ}
  {\rm Z\kern-0.30em{}Z}
  {\rm Z\kern-0.25em{}Z} }}
\def\sQQ{{\kern 0.27em \vrule height1.45ex width0.03em depth0em
          \kern-0.30em \rm Q}}
\def\qu{{\mathchoice
    {\sQQ}
    {\sQQ}
  {\kern 0.225em \vrule height1.05ex width0.025em depth0em \kern-0.25em \rm Q}
  {\kern 0.180em \vrule height0.78ex width0.020em depth0em \kern-0.20em \rm Q}
        }}
\def\sCC{{\kern 0.27em \vrule height1.45ex width0.03em depth0em
          \kern-0.30em \rm C}}
\def\complex{{\mathchoice
    {\sCC}
    {\sCC}
  {\kern 0.225em \vrule height1.05ex width0.025em depth0em \kern-0.25em \rm C}
  {\kern 0.180em \vrule height0.78ex width0.020em depth0em \kern-0.20em \rm C}
        }}

\MakeOuterQuote{"}

\newcommand{\ba}{\begin{array}}
\newcommand{\ea}{\end{array}}
\newcommand{\be}{\begin{equation}}
\newcommand{\ee}{\end{equation}}
\newcommand{\bea}{\begin{eqnarray}}
\newcommand{\eea}{\end{eqnarray}}
\newcommand{\beaa}{\begin{eqnarray*}}
\newcommand{\eeaa}{\end{eqnarray*}}

\def\z{\zeta}

\font\tenmath=msbm10 \font\sevenmath=msbm7 \font\fivemath=msbm5
\newfam\mathfam \textfont\mathfam=\tenmath
\scriptfont\mathfam=\sevenmath \scriptscriptfont\mathfam=\fivemath

\def \={{\buildrel {\rm (law)} \over =}}

\def\qed{ \hfill \vrule width.25cm height.25cm depth0cm\smallskip}

\newcommand{\basa}{\begin{assumption}}
\newcommand{\easa}{\end{assumption}}

\newcommand{\bas}{\begin{assum}}
\newcommand{\eas}{\end{assum}}

\newcommand{\R}{\mathbb{R}}

\newcommand{\C}{\mathbb{C}}

\usepackage{amsmath,amsthm}
\usepackage{amssymb}
\DeclareMathOperator{\Hess}{Hess}

\newcommand{\A}{\mathbb{C}\langle X_1,\dots,X_n\rangle}

\newcommand{\Dom}{\mathrm{Dom}}

\newcommand{\ignore}[1]{}
\begin{document}

\title{Obata's rigidity theorem in free probability}
\author{Charles-Philippe Diez$^{\dagger}$}
\thanks{
$^{\dagger}$ Technion – Israel Institute of Technology, Faculty of Mathematics, Technion City, Haifa
3200003, Israel. \textbf{chphdiez@campus.technion.ac.il}.}

\begin{abstract}
We establish a free analogue of Obata’s rigidity theorem. More precisely, Cheng and Zhou (2017) proved that on a weighted Riemannian manifold, the sharp spectral gap (Poincaré constant) is achieved only when the space splits isometrically off a one‑dimensional Gaussian factor, providing an infinite‑dimensional counterpart of Obata’s rigidity theorem. We obtain the corresponding phenomenon in free probability, extending it beyond the setting of analytic self‑adjoint potentials:

Assume a self-adjoint $n$--tuple $X=(X_1,\ldots,X_n)$ admits Lipschitz conjugate variables in the
sense of Dabrowski (2014).  Under a suitable non-commutative curvature--dimension condition, we
show that any non-zero saturator of Voiculescu’s free Poincar\'e inequality must be an affine
function of the generators.  Consequently, we deduce that the von Neumann algebra $M=W^\ast(X_1,\ldots,X_n)$
necessarily splits off a freely complemented semicircular component $W^*(Y_1)\simeq L^{\infty}([-2,2],\mu_{sc})$, which is also maximal amenable in $M$.

More generally, whenever the first eigenspace of the free Laplacian $\Delta:=\partial^*\bar\partial$ is finite-dimensional of
rank $r\ge 1$, our rigidity argument shows that these $r$ extremal directions form a free
semicircular family, yielding a free product decomposition with an $L(\mathbb{F}_r)$
factor. This provides a free analogue of this classical Gaussian splitting phenomenon and reveals a rigidity mechanism under non-commutative curvature.
\end{abstract}

\maketitle

\section{Introduction}

\subsection{Classical Rigidity and Stability in Spectral Geometry}\label{1.1}

A central theme in geometric analysis is to understand the structure of spaces
that \emph{maximize} or \emph{minimize} a
given geometric or analytic quantity within a natural class.  Extremal
phenomena often force strong rigidity, revealing for example product structures or
canonical geometric models.  A nice example, which serves as a guiding
analogy for our work, is the Cheeger--Gromoll splitting theorem
\cite{CheegerGromoll}: a complete Riemannian manifold with non-negative Ricci
curvature that contains a line (i.e.\ an infinite-length minimizing geodesic)
must split isometrically as a product with $\mathbb{R}$.  This result
is the starting point of a broader principle: extremal geometric behavior, whether in
diameter, curvature, isoperimetry, or spectral quantities, etc... typically forces
the underlying space to exhibit strong structural constraints, and comparison with a reference space.
\par\vspace{0.5em}
Rigidity in spectral geometry begins with Obata’s theorem \cite{Obata}, which
identifies the round sphere as the unique extremal case of the Lichnerowicz
estimate \cite{Lich}.  If a $n$-dimensional Riemannian manifold $(M^n,g)$ satisfies
$\mathrm{Ric}\ge (n-1)g$,
then the first non-zero eigenvalue $\lambda_1$ of the Laplace--Beltrami
operator obeys $\lambda_1 \ge n$, and equality forces $(M^n,g)$ to be isometric to the standard sphere
$(\mathbb{S}^n,g_{\mathrm{can}})$.  This principle, saying that sharp eigenvalue
bounds rigidly determine geometry, has driven a long line of developments:
Cheeger’s inequality \cite{Cheeger} links $\lambda_1$ to isoperimetry,
Payne--Weinberger \cite{PayneWeinberger} established sharp spectral gap
estimates for convex domains, and Cheng’s comparison theorems \cite{Cheng}
show how analytic inequalities encode global geometric information.

\medskip

The spectral gap is captured probabilistically by the Poincaré inequality: a probability measure $\mu$ satisfies it with constant $C_P>0$ if

\[
\int f^2\,d\mu \;\le\; C_P \int |\nabla f|^2\,d\mu
\quad\text{whenever }\int f\,d\mu=0,
\]

and the optimal constant is $C_P=1/\lambda_1$.  This inequality governs variance, mixing, and concentration, holds for many log-concave measures, and in one dimension is characterized by the Muckenhoupt criterion \cite{Mac}.  It sits within a hierarchy of functional inequalities: logarithmic Sobolev \cite{Gross}, hypercontractivity, transportation inequalities, with deep consequences in convex geometry \cite{BobkovLedoux,KLS}, ergodic theory, and high-dimensional probability \cite{Milman}.  The Bakry--Émery $\Gamma$-calculus \cite{be} unifies these ideas through the curvature--dimension condition $CD(K,N)$; in particular, $CD(1,\infty)$ implies an upper bound on the Poincaré constant $C_P\le 1$.  This synthetic curvature framework extends to non-smooth metric measure spaces through the work of Sturm \cite{SturmI,SturmII}, Lott--Villani \cite{LottVillani}, and Ambrosio--Gigli--Savaré \cite{AGS}.
\par\vspace{0.5em}
The rigidity phenomenon most relevant to our work is the theorem of Cheng and Zhou \cite{ChengZhou}, which shows that on spaces satisfying the curvature–dimension condition $CD(1,\infty)$, extremizers of the Poincaré inequality (equivalently, the sharp spectral gap $\lambda_1=1$) force a Gaussian splitting. Their result, extended to $\mathrm{RCD}(1,\infty)$ spaces by Gigli, Ketterer, Kuwada, and Ohta \cite{GKKO,KuwadaOhta}, identifies a one-dimensional Gaussian direction and yields an isometric and measure-theoretic product decomposition. For clarity, we recall the statement in the smooth setting:

\begin{theorem}[Cheng--Zhou \cite{ChengZhou}; Gigli--Ketterer--Kuwada--Ohta \cite{GKKO}]\label{cheng}
Let $(M,g,\mu)$ be a smooth weighted Riemannian manifold satisfying $CD(1,\infty)$.  
If there exists a centered non-zero $f\in W^{1,2}(M,\mu)$ such that

\[
\int_M f^2\,d\mu \;=\; \int_M |\nabla f|^2\,d\mu,
\]

then $(M,g,\mu)$ splits isometrically and measure-theoretically as

\[
(M,g,\mu)\ \simeq\ (\mathbb{R},|\cdot|,\gamma)\ \times\ (M',g',\mu'),
\]

where $\gamma$ is the standard Gaussian measure on $\mathbb{R}$.
\end{theorem}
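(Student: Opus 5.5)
The strategy is the classical Bochner/$\Gamma_2$ argument: equality in the Poincaré inequality forces the Hessian of the extremal function to vanish, and a parallel gradient field then generates the splitting. Write $\mu=e^{-V}\mathrm{vol}_g$ and $L=\Delta-\nabla V\cdot\nabla$. The condition $CD(1,\infty)$ means $\mathrm{Ric}+\nabla^2V\ge g$. For $f\in C^\infty$ the Bochner formula reads
\[
\Gamma_2(f)=|\nabla^2 f|^2+(\mathrm{Ric}+\nabla^2 V)(\nabla f,\nabla f)\ge |\nabla^2 f|^2+|\nabla f|^2 .
\]
Let $f$ be the given extremal. By the variational characterization of $\lambda_1$, $f$ minimizes the Rayleigh quotient, so $f$ is an eigenfunction with $-Lf=f$. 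Elliptic regularity then makes $f$ smooth.

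Integrating $\Gamma_2$ gives
\[
\int (Lf)^2\,d\mu=\int\Gamma_2(f)\,d\mu\ge\int|\nabla^2f|^2\,d\mu+\int|\nabla f|^2\,d\mu .
\]
The left side equals $\int f^2\,d\mu=\int|\nabla f|^2\,d\mu$. Hence $\nabla^2 f\equiv0$ and $(\mathrm{Ric}+\nabla^2V)(\nabla f,\nabla f)=|\nabla f|^2$.

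The main technical obstacle is justifying this integration by parts on a possibly noncompact complete manifold. I would handle it with cutoff functions $\phi_R$ and the standard Bakry--Émery facts: essential self-adjointness of $L$ on $C_c^\infty$ under completeness, and $|\nabla f|\in L^2$. Applying the integrated Bochner identity with $\phi_R^2$ and letting $R\to\infty$ yields $\int\Gamma_2(f)\le\int (Lf)^2$. Alternatively, one can use the $\Gamma_2$ inequality in the form $\int\Gamma(P_tf)\le e^{-2t}\int\Gamma(f)$ together with the fact that equality propagates along the semigroup.

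Since $\nabla^2f=0$, the field $\nabla f$ is parallel, so $|\nabla f|\equiv c$ is constant, with $c>0$ because $f\neq0$ is centered. Normalize $u=f/c$, so that $|\nabla u|=1$ and $\nabla u$ is a parallel unit field. By de Rham decomposition, or more directly by following the flow $\Phi_t$ of $\nabla u$ (which consists of isometries, being generated by a parallel and hence Killing field, and is complete since the field has unit length), we obtain an isometry
\[
\mathbb{R}\times M'\to M,\qquad (t,x)\mapsto\Phi_t(x),
\]
where $M'=u^{-1}(0)$ is totally geodesic. Under this map $u(\Phi_t(x))=t$.

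It remains to identify the measure. Since $\nabla u$ is parallel, $\Delta u=0$. Hence $Lu=-\langle\nabla V,\nabla u\rangle$, and this must equal $-u=-t$, giving $\partial_tV=t$. Integrating,
\[
V(t,x)=\tfrac{t^2}{2}+W(x),
\]
so $\mu=\gamma\otimes e^{-W}\mathrm{vol}_{g'}$ after normalization. This gives the claimed splitting with the Gaussian factor on $\mathbb{R}$.
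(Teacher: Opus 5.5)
The paper does not prove this statement. It quotes it from Cheng--Zhou and Gigli--Ketterer--Kuwada--Ohta as motivation, so there is no in-paper argument to match. Your proof is the standard smooth Bochner argument, essentially Cheng--Zhou's, and it is correct once you make a few things explicit.

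First, state the standing assumptions: $(M,g)$ is complete and connected, and $\mu$ is a probability measure. Second, cite the Bakry--\'Emery bound $\lambda_1\ge 1$ under $CD(1,\infty)$. Without it, the equality $\int f^2\,d\mu=\int|\nabla f|^2\,d\mu$ does not make $f$ a Rayleigh-quotient minimizer, and $-Lf=f$ does not follow.

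Third, in the cutoff step the cross term $-2\int\phi_R\,\nabla^2 f(\nabla\phi_R,\nabla f)\,d\mu$ cannot simply be sent to zero, because $|\nabla^2 f|\in L^2(\mu)$ is not known a priori. Absorb it with Young's inequality instead. Bochner, together with $Lf=-f$ and $\mathrm{Ric}+\nabla^2V\ge g$, gives the pointwise bound $\tfrac12 L|\nabla f|^2\ge|\nabla^2 f|^2$. Integrating against $\phi_R^2\,d\mu$ then yields $\int\phi_R^2|\nabla^2 f|^2\,d\mu\le 4\int|\nabla\phi_R|^2|\nabla f|^2\,d\mu\le 4R^{-2}\int|\nabla f|^2\,d\mu\to0$. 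Hence $\nabla^2 f\equiv0$, with no domain issues.

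For the splitting, use your flow argument rather than de Rham, which would require simple connectedness. The flow route works because $u$ is itself a global coordinate.

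Your argument lines up step by step with the paper's free analogue in Section~3:
\begin{enumerate}
\item Your Euler--Lagrange step is Proposition~\ref{prop1}.
\item Your integrated $\Gamma_2$ identity is Lemma~\ref{lem:one-line-Delta}. The paper derives it from Dabrowski's almost-commutation relation (Lemma~\ref{in2deltaDelta}) in place of Bochner's formula.
\item Your conclusion $\nabla^2 f=0$ is Proposition~\ref{prop:Szero-Delta}, i.e.\ $\mathcal E_2=0$.
\item Your splitting via the flow of the parallel field $\nabla u$ has no counterpart. In the flat free setting, vanishing second-order difference quotients already force $f$ to be affine in the generators (Lemma~\ref{lem:kernel-scalar-L2} and Corollary~\ref{affine}). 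The splitting is then just an orthogonal change of variables.
\item Your identification $\partial_tV=t$ (the score in the $t$-direction equals $t$) becomes $f=\sum_j c_j\xi_j$, i.e.\ $\xi_{Y_1}=Y_1$, in Lemma~\ref{lem:affine-conjugates-semicircle} and the proof of Theorem~\ref{thm:free-product-lipschitz}.
\item The factorization $V=\tfrac{t^2}{2}+W(x)$, which gives Gaussianity and independence, is replaced by Voiculescu's Cram\'er--Rao equality case, which gives semicircularity and freeness.
\end{enumerate}
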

\par\vspace{0.5em}
Recent work has also established the \emph{stability} of this rigidity phenomenon: measures whose spectral gap is nearly optimal must be quantitatively close to Gaussian.  This was shown by De Philippis--Figalli \cite{DeFiga}, and improved by Courtade--Fathi \cite{CourtF} using Stein’s method, and then extended to $\mathrm{RCD}(1,\infty)$ spaces by Bertrand--Fathi \cite{FatB} using the extension of Klartag’s needle decomposition \cite{Klart} to the RCD setting by Cavaletti--Mondino~\cite{Cav}.  Altogether, these results show that extremal and near-extremal functional inequalities encode important geometric information.  This classical picture, and especially the Cheng--Zhou splitting Theorem~\ref{cheng} therefore serves as the guiding analogy for our work, where we seek free analogues of such rigidity phenomena in the setting of free probability.

\subsection{Free Probability Framework}

Free probability, introduced by Voiculescu~\cite{Voic1,Voic2,Voic3,V,V6}, provides a non‑commutative analogue of classical probability in which independence is replaced by \emph{freeness}.  Random variables are modeled by operators in a tracial von Neumann algebra, and their joint distribution is encoded by mixed moments.  Voiculescu’s discovery of \emph{asymptotic freeness} \cite{Voiculescu1991,Voiculescu1992} established the important bridge between random matrix theory and operator algebras: independent Gaussian or unitary ensembles behave, in the large‑dimension limit, as freely independent non‑commutative random variables.  The semicircular law plays the role of the Gaussian distribution, and many classical analytic tools have free analogues, including the free Poincaré inequality \cite{Voipub,Dab10} (and the free logarithmic Sobolev inequality of Biane and Speicher~\cite{BS2,BLS}), expressed through the free difference quotient and closely tied to Voiculescu’s free entropy and free Fisher information \cite{Voic2,V,V6,VF}.
\par\vspace{0.5em}
Free entropy theory has had major consequences for the structure theory of free group factors $L(\mathbb{F}_n),\:n\geq2$.  One of the main achievement of Voiculescu was the proof the absence of Cartan subalgebras \cite{Voic2} in free groups factors: the impossibility of realizing free group factors via measured equivalence relations (via Feldman--Moore \cite{FeldmanMoore}), and Ge’s primeness theorem \cite{Ge}.  These results demonstrate the analytic power of free entropy; see Charlesworth--Nelson~\cite{CN} for a detailed survey.  Although we do not use these consequences directly, they form part of the analytic landscape motivating our work here.
\par\vspace{0.5em}
Several breakthrough developments in free stochastic analysis and non\mbox{-}commutative
transport have expanded the scope of the theory far beyond expectation.  Biane’s free
hypercontractivity~\cite{Biane}, the free Malliavin calculus of Biane--Speicher~\cite{BS},
and the free diffusion theory of Guionnet--Shlyakhtenko~\cite{GS,GS2} established deep
connections between free probability, stochastic analysis, and random matrix models.
Subsequent work by Dabrowski, Guionnet, Jekel, Shlyakhtenko, and others
\cite{DAB,Dab14,YGS,Jek1,Jek2,Jek3,Jek4,Jek5} produced major advances in free entropy
(including the equality of microstates and non\mbox{-}microstates free entropy), developed
free stochastic PDEs, and established foundational results in free transport, the free
Wasserstein manifold, and non\mbox{-}commutative Monge--Kantorovich duality.
\par\vspace{0.5em}
Free transport techniques have yielded powerful structural consequences for von Neumann
and $C^\ast$-algebras, including progress on the isomorphism problem for $q$-deformed free
group factors~\cite{GS}.  Jekel, Li, and Shlyakhtenko~\cite{Jek4} provided a variational
construction of free Gibbs laws and extended triangular transport to the $C^\ast$-algebraic
setting.  Jekel’s triangular transport~\cite{Jek1} led to a breakthrough on Popa’s
\emph{freely complementation (FC) problem}~\cite{Popa}, which asks whether every maximal
amenable subalgebra $B\subset L(\mathbb{F}_n)$ is freely complemented, i.e.\ whether
$L(\mathbb{F}_n)=B * N$ for some tracial von Neumann algebra $N\neq \mathbb{C}1$.
More recently, Boschert, Davis, and Hiatt~\cite{freely} proved that MASAs obtained by
``cutting and pasting’’ corners of generator MASAs are also freely complemented, which provide more general examples satisfying the \emph{(FC)} problem.

\subsection{Free Poincaré Inequalities}

Voiculescu first observed a free analogue of the classical Poincaré inequality in an unpublished
note~\cite{Voipub} (see also Dabrowski~\cite[Lemma~2]{DAB}).  This important discovery initiated a non‑commutative
spectral gap theory in which free difference quotients play the role of classical gradients and
provide an infinitesimal calculus adapted to non‑commutative random variables. A key observation is that the free Poincaré inequality
holds \emph{universally}, without any assumptions on the \emph{law} of such variables, in sharp contrast with the
classical situation.  In perfect analogy with the Gaussian case, the free Poincaré constant for a
semicircular system is $C_P=1$ as proved by Biane~\cite{BLS}, but beyond the semicircular law essentially nothing is
known about sharp constants for other non‑commutative distributions.
\par\vspace{0.5em}
In one dimension, Ledoux and Popescu~\cite{LP} developed a streamlined approach and studied the
behavior of the free Poincaré constant under smooth changes of variables.  Voiculescu’s inequality
asserts that for any compactly supported probability measure $\mu$,
\begin{equation}\label{FPIn}
    \mathrm{Var}_{\mu}(f)
    \;\le\;
    C\iint\!\left(\frac{f(x)-f(y)}{x-y}\right)^2\,d\mu(x)\,d\mu(y),
\end{equation}
with $C\le 2\rho(\mu)^2$.  This is the free analogue of the classical spectral gap estimate, with
the free difference quotient replacing the usual derivative.
\par\vspace{0.5em}
A recent contribution in the operator‑valued setting is due to Ito~\cite{Ito}, who characterized
$B$‑valued semicircular systems via a sharp $B$‑valued free Poincaré inequality, extending Biane’s
result \cite{BLS}.  Ito also showed that Voiculescu’s conjecture on the $B$‑valued free Poincaré
inequality from \cite{Aim} does not hold as originally stated.  The methods in~\cite{Ito} rely on
explicit Chebyshev‑polynomial computations and differ substantially from the functional‑analytic
approach that we will pursue here.
\par\vspace{0.5em}
Free Poincaré inequalities are also now known to have strong operator‑algebraic consequences. In particular, Dabrowski~\cite{DAB} proved that finite free Fisher information forces $W^{*}(X_{1},\ldots,X_{n})$ to be a factor without property~$\Gamma$ in the sense of Murray and von Neumann~\cite{vonN}. A key ingredient in the factoriality step is that the kernel of the free difference quotients consists only of scalars. For the non-$\Gamma$ part, Dabrowski establishes a refined mixed Poincar\'e–non-$\Gamma$ inequality: for 
$Z \in W^*(X_1,\dots,X_n) \cap \mathrm{Dom}(\bar\partial)$ (the domain of the free difference quotients), the variance of $Z$ is quantitatively controlled by its commutators with the generators $X_i$ and the conjugate variables, ensuring that all central sequences are trivial.

\subsection{Outline of our Results}

Free functional inequalities for free Gibbs measures with convex potentials are not well understood
outside the semicircular case (or in the case of separable potentials, which, however, reduce to
freeness).  Even the behavior of the free Poincaré constant under natural non‑commutative convexity
assumptions is largely unknown.  In contrast with the classical theory of uniformly log‑concave
measures $e^{-V}dx$ with $\nabla^2V\ge cI_n$, the free setting lacks a general curvature framework.
Our first main result fills part of this gap: under a suitable non‑commutative curvature condition,
the free Poincaré constant is controlled by the inverse of the convexity parameter.  Convexity is
here encoded through bounds on the non‑commutative Hessian, providing a robust analytic mechanism for
establishing functional inequalities in free probability.
\par\vspace{0.5em}
We extend these results to the framework of \emph{Lipschitz conjugate variables}, introduced in the important work of
Dabrowski~\cite{Dab14}.  This setting is remarkably flexible: it does not require the existence of a
potential, yet it retains enough analytic structure to support a weak-$L^{2}$ Bakry--Émery type argument (integration is always performed against the trace state~$\tau$, although we believe that a stronger version at the level of completely positive maps should also hold).
  The key commutation
relations between the generator and the free difference quotients, established in Dabrowski’s
seminal work~\cite{Dab14}, play a central role here, and our curvature condition can be formulated
entirely in terms of the conjugate variables.
\par\vspace{0.5em}
Our main result is an analytic–algebraic rigidity theorem.  Under a non‑commutative
curvature–dimension condition $CD(1,\infty)$, any extremizer of Voiculescu’s free Poincar\'e
inequality must be affine linear in the variables $(X_1,\ldots,X_n)$.  More precisely, if the
non‑commutative Hessian of the conjugate variables satisfies

\[
\mathscr{J}\xi\;\ge\;(1\otimes 1)\otimes I_n,
\]

then a free Poincaré inequality with constant less than or equal to $1$ holds, and any function saturating it must be
affine linear, exactly as in the classical (or free) case, where the first non‑zero eigenfunctions of the
Ornstein--Uhlenbeck operator are linear.  After normalization, this forces the appearance of a
semicircular direction.  By freeness, this yields a free product decomposition

\[
W^\ast(X_1,\ldots,X_n)\simeq W^\ast(Y_1)*N,
\]

where $W^\ast(Y_1)$ is the diffuse abelian algebra generated by the semicircular direction.  Since
$W^\ast(Y_1)$ is freely complemented, Popa’s theorem~\cite{Popa} implies that for $n\ge 2$ it is in
fact \emph{maximal amenable} inside $W^\ast(X_1,\ldots,X_n)$ (and a fortiori a MASA).  This provides a free analogue
of Obata’s theorem.
\par\vspace{0.5em}
Our computations strongly suggest that a full \emph{free Bakry--Émery theory} is within reach, with
potential connections and applications to random matrix models, von Neumann algebras, or non‑commutative geometry.
In Section~\ref{Open} we outline several directions for future research.  Classically, for a
sufficiently regular log‑concave measure $\mu\propto e^{-V}dx$, the (positive) generator $-\mathcal L_V$ has pure point spectrum with
finite multiplicities, and its eigenfunctions exhibit exponential decay~\cite{Davies,ReedSimonIV}.
The free Ornstein--Uhlenbeck operator enjoys analogous spectral properties (up to the absence of
exponential decay, which has no meaning here), and it is natural to ask whether such features
persist for more general free Gibbs states or tuples with Lipschitz conjugate variables.
\par\vspace{0.5em}
A deeper conceptual direction concerns the notion of \emph{free Ricci curvature}.
In the present work we remain in the ``flat'' setting of free difference quotients,
whose exact coassociativity implies the absence of any intrinsic geometric curvature.
All curvature therefore arise only from the potential, through the Jacobian
$\mathscr{J}\xi$ of the conjugate variables, in direct analogy with the classical
Bakry--Émery theory on $\mathbb{R}^n$. Beyond this flat framework, Dabrowski’s theory of \emph{almost co-associative}
derivations~\cite{Dab14v2} suggests a natural candidate for a genuinely geometric
\emph{free Riemmanian tensor} (and hence also a Ricci tensor), encoded by a defect tensor measuring the failure of
coassociativity.  Developing curvature--dimension estimates and rigidity phenomena
in this more general setting would amount to a non‑commutative Bakry--Émery theory,
of which free difference quotients form the coassociative, Ricci‑flat special case. We return to this perspective in
Section~\ref{freericci}, where we outline how almost
co-associativity provides a geometric curvature term in addition to the potential
curvature coming from the Jacobian of conjugates variables.

\section{Preliminaries}
\subsection{Tracial von Neumann algebras}
Throughout the paper we work in a tracial $W^\ast$--probability space
$(\mathcal{M},\tau)$, where $\mathcal{M}$ is a finite von Neumann algebra with separable predual, equipped with a faithful normal tracial state $\tau$.  We denote by $L^2(\mathcal{M},\tau)$ the Hilbert space completion of $\mathcal{M}$ with respect to the $L^2$--inner product defined by
\[
\langle x, y\rangle_{L^2(\mathcal{M})} := \tau(y^\ast x),
\]
linear in the first variable and conjugate--linear in the second, and we identify $\mathcal{M}$ with its image in $B(L^2(\mathcal{M},\tau))$ via left multiplication. We write $1 \in \mathcal{M} \subset L^2(\mathcal{M})$ for the canonical cyclic and separating vector. Since $\tau$ is tracial, the map $x\mapsto x^\ast$ extends uniquely to a conjugate--linear isometric involution on $L^2(\mathcal{M},\tau)$; we denote it by $J$.  Thus $J$ is the Tomita conjugation, the $L^2$--extension of the $^\ast$--operation, and satisfies $J = J^\ast = J^{-1}$.  

For $f\in L^2(\mathcal{M},\tau)$ we write $f^\ast := Jf$, and define
\[
\Re f := \tfrac12(f+f^\ast), \qquad
\Im f := \tfrac{1}{2i}(f-f^\ast),
\]
which are self--adjoint, i.e. $J\:\Re f=\Re f$ and $J\:\Im f=\Im f$, and satisfy
\[
\tau(\Re f)=\Re\,\tau(f), \qquad \tau(\Im f)=\Im\,\tau(f),
\]
where we still denote $\tau(x):=\langle x,1\rangle_{L^2(M)}$ as the canonical $L^2$-extension of the trace.

\medskip

Given self--adjoint elements $X_1,\ldots,X_n\in\mathcal{M}_{sa}$, we write $
M := W^\ast(X_1,\ldots,X_n)
$ for the von Neumann algebra they generate, and denote by $L^2(M)$ the
$L^2$--space obtained as the completion of $M$ with respect to the
$L^2$--norm induced by $\tau$.

\medskip

For a von Neumann subalgebra $M\subset\mathcal{M}$ we denote by
$M\bar\otimes M^{op}$ the von Neumann tensor product of $M$ with its opposite
algebra $M^{op}$ (i.e. the same $^\ast$--algebra but with reversed multiplication).  This
convention is natural for bimodules, since a right $M$--action corresponds to a
left action of $M^{op}$.  The algebra $M\bar\otimes M^{op}$ acts normally on
$L^2(M)\bar\otimes L^2(M^{op})$ by left multiplication on each leg.  Because
$\tau$ is tracial, we may identify $L^2(M^{op})$ with $L^2(M)$, and hence $
L^2(M)\,\bar\otimes\,L^2(M^{op})
\;\simeq\;
L^2(M)\,\bar\otimes_{1\otimes\mathcal{O}}\,L^2(M)$.

\medskip

More precisely, for each $n$ there are canonical identifications

\[\label{HS}
HS(L^2(M))^{\oplus_n}
\;\simeq\;
\big(L^2(M)\bar\otimes L^2(M)\big)^{\oplus_n}
\;\simeq\;
\big(L^2(M)\bar\otimes_{1\otimes\mathcal{O}} L^2(M^{op})\big)^{\oplus_n},
\]

where the second isomorphism uses traciality to place the right leg in the
opposite algebra.  Under the standard identification
$L^2(M)\otimes L^2(M)\simeq HS(L^2(M))$, the tensor $a\otimes b$ corresponds to
the finite-rank operator $x\mapsto a\,\tau(bx)$.  As real $M$--$M$ bimodules, $a(b\otimes c)d = ab\otimes cd$
and the real structure is given by

\[\label{hs}
(a\otimes b)^\dagger = b^\ast\otimes a^\ast.
\]
corresponding to adjointness of Hilbert-Schmidt operators.

\medskip

Finally, the identification $1\otimes\mathcal{O}$ between
$L^2(M)\otimes L^2(M^{op})$ and $L^2(M)\otimes L^2(M)$ comes from traciality.
Under this identification, $
(1\otimes\mathcal{O})(a\otimes b)=a\otimes b,
$with $b$ regarded as an element of $M^{op}$.  Since we will use this
identification throughout, we typically suppress the subscript and omit the
symbol $1\otimes\mathcal{O}$ when the context is clear.

\medskip

Finally, unless otherwise indicated, $\|\cdot\|$ denotes the operator norm on some
$B(\mathcal{H})$ (and on matrix amplifications when relevant).

\bigskip
\subsection{Free difference quotients and conjugate variables}
In this part, we mostly follow Dabrowski's exposure \cite{Dab10,Dab14}, but we also refer to the original paper of Voiculescu~\cite{V} and we finally recommend looking at Mai-Speicher\cite{MS} for more details on \emph{free} and \emph{cyclic} differential calculus.
\begin{flushleft}
Let $(X_1,\ldots,X_n)\in\mathcal{M}^n_{\mathrm{s.a.}}$ be a self-adjoint
$n$--tuple (always assumed \emph{algebraically free}), and set $M=W^\ast(X_1,\ldots,X_n)$ and
$M_0=\mathbb{C}\langle X_1,\ldots,X_n\rangle$.
\end{flushleft}
\begin{definition}[Partial free difference quotients]
The \emph{partial free difference quotients} are the derivations

\[
\partial_i : M_0 \longrightarrow L^2(M)\,\bar\otimes\,L^2(M^{op}),
\]

determined on generators by

\[
\partial_i(X_j)=\delta_{ij}.1\otimes 1.
\]

\end{definition}

\begin{definition}(Free Fisher information, Voiculescu~\cite[Definition~6.1]{V})
The \emph{free Fisher information} of $(X_1,\ldots,X_n)$ is

\[
\Phi^\ast(X_1,\ldots,X_n)
   := \sum_{i=1}^n \big\|\partial_i^\ast(1\otimes 1)\big\|_{L^2(M)}^2,
\]

whenever each $\partial_i^\ast(1\otimes 1)$ exists in $L^2(M)$; otherwise
we set $\Phi^\ast(X)=+\infty$.
\end{definition}

\begin{proposition}(Closability, Voiculescu~\cite[Lemma~3.3]{V})
If $\Phi^\ast(X_1,\ldots,X_n)<\infty$, then each $\partial_i$ is closable as
an unbounded operator

\[
L^2(M)\longrightarrow L^2(M)\,\bar\otimes\,L^2(M^{op}),
\]

and we denote the closures $\bar\partial_i$.  We write
$\bar\partial=(\bar\partial_1,\ldots,\bar\partial_n)$.
\end{proposition}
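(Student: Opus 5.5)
The plan is the standard argument that a densely defined operator whose adjoint is densely defined is closable. Concretely, I will show that $\partial_i^\ast$ is densely defined on $L^2(M)\bar\otimes L^2(M^{op})$ by proving that every elementary tensor $a\otimes b$ with $a,b\in M_0$ lies in $\Dom(\partial_i^\ast)$. Finite linear combinations of such tensors are dense in $L^2(M)\bar\otimes L^2(M^{op})$, since $M_0$ is dense in $L^2(M)$. A densely defined adjoint then gives closability of $\partial_i$ immediately, with $\bar\partial_i=\partial_i^{\ast\ast}$.

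The first step is to write down the candidate for $\partial_i^\ast(a\otimes b)$. The hypothesis $\Phi^\ast(X)<\infty$ gives $\xi_i:=\partial_i^\ast(1\otimes 1)\in L^2(M)$. Then one proposes
\[
\partial_i^\ast(a\otimes b)= a\,\xi_i\, b - (1\otimes\tau)(\partial_i a)\,b - a\,(\tau\otimes 1)(\partial_i b),
\]
where $1\otimes\tau$ and $\tau\otimes 1$ denote the partial traces, with the opposite-algebra convention on the right leg. Each term lies in $L^2(M)$: the first because $a,b\in M$ act boundedly on $\xi_i$ from both sides, and the others because $\partial_i a,\partial_i b\in M_0\otimes M_0$ are algebraic tensors.

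The second step is the verification. For $P\in M_0$ I check that $\langle \partial_i P, a\otimes b\rangle=\langle P, \text{candidate}\rangle$. The key algebraic fact is the Leibniz rule
\[
\partial_i(a^\ast P b^\ast)=(a^\ast\otimes 1)\,\partial_i P\,(1\otimes b^\ast)+\partial_i(a^\ast)\,(1\otimes Pb^\ast)+(a^\ast P\otimes 1)\,\partial_i(b^\ast).
\]
Pairing this with $1\otimes 1$ and using $\langle \partial_i Q,1\otimes1\rangle=\tau(Q^\ast\xi_i)$ with $Q=a^\ast P b^\ast$ isolates $\langle(a^\ast\otimes 1)\partial_i P(1\otimes b^\ast),1\otimes1\rangle=\langle\partial_iP,a\otimes b\rangle$. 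The two remaining terms are rewritten via traciality as pairings of $P$ against the partial-trace expressions. This uses $\partial_i(a^\ast)=(\partial_i a)^\dagger$ under the real structure $(x\otimes y)^\dagger=y^\ast\otimes x^\ast$, which is checked on monomials.

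The main obstacle is purely bookkeeping. One has to get the conjugations, the opposite-algebra identification and the order of the legs consistent, so that the partial-trace terms appear with the right sides and adjoints. I would first verify the formula on monomials $a=X_{j_1}\cdots X_{j_k}$, $b=1$, and then extend by symmetry ($b$ handled via $J$ and the dagger structure) and by linearity. Once $M_0\otimes_{\mathrm{alg}} M_0\subset\Dom(\partial_i^\ast)$ is established, closability follows, and $\bar\partial$ is defined componentwise.
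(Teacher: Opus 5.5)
Your proposal is correct and is exactly the argument behind the cited result. The paper gives no proof beyond the reference to Voiculescu, and that proof shows $M_0\otimes M_0\subset\Dom(\partial_i^\ast)$ with precisely your formula $\partial_i^\ast(a\otimes b)=a\xi_i b-((1\otimes\tau)\partial_i a)b-a((\tau\otimes 1)\partial_i b)$, so that $\partial_i^\ast$ is densely defined and $\partial_i$ is closable. One bookkeeping slip to fix: $\langle\partial_i Q,1\otimes 1\rangle=\langle Q,\xi_i\rangle=\tau(\xi_i Q)$, which is linear in $Q$, not $\tau(Q^\ast\xi_i)$ (its complex conjugate); with this correction the pairing computation goes through as you describe.
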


\begin{definition}(Conjugate variables, Voiculescu~\cite[Definition~3.1]{V})
If the adjoint value

\[
\xi_i := \partial_i^\ast(1\otimes 1)\in L^2(M)
\]

exists, it is called the \emph{conjugate variable} associated to $X_i$.
\end{definition}

\begin{remark}
The free difference quotients are \emph{real} derivations: for all $x\in M_0$,

\[
\partial_i(x^\ast) = (\partial_i(x))^{\dagger},
\]

where again $(a\otimes b)^{\dagger} := b^\ast \otimes a^\ast$.
Assuming each $\partial_i$ is closable as an operator
$L^2(M)\to L^2(M)\bar\otimes L^2(M^{op})$, let $\bar\partial_i$ denote its
closure with

\[
\Dom(\bar\partial) = \bigcap_{i=1}^n \Dom(\bar\partial_i).
\]
The left-hand inclusion being trivial, while the reverse inclusion follows from a duplicate adjoint argument, see Dabrowski~\cite{Dab10}.
\bigskip
Then $\Dom(\bar\partial)$ is $^\ast$‑stable, and

\[
\bar\partial_i(x^\ast) = \big(\bar\partial_i(x)\big)^{\dagger},
\qquad \forall\, x\in \Dom(\bar\partial).
\]

\end{remark}

More general notions of \emph{conjugate system}, involving weaker assumptions,
also exist as introduced below.

\begin{definition}(Conjugate system, Mingo--Speicher \cite[Definition~12]{SM})
Let $X=(X_1,\ldots,X_n)\in\mathcal{M}^n$.  
A family $\xi_1,\ldots,\xi_n\in L^2(\mathcal{M})$ is called a 
\emph{conjugate system} for $X$ if for every non-commutative polynomial 
$P\in M_0=\mathbb{C}\langle X_1,\ldots,X_n\rangle$ one has
\begin{equation}\label{conj-rel}
    \tau(\xi_iP(X))
    =
    \tau\otimes\tau(\partial_i P(X)),
    \qquad 1\le i\le n.
\end{equation}
\end{definition}

Strictly speaking, the requirement that the conjugate variables $\xi_i$ lie in 
$L^2(M)$, where $M=W^\ast(X_1,\ldots,X_n)$, is not essential.  
Indeed, even if one first constructs elements $\xi_i\in L^2(\mathcal{M})$ satisfying
\eqref{conj-rel}, one may obtain a genuine conjugate system inside $L^2(M)$ by
projecting onto $L^2(M)$:

\[
p:L^2(\mathcal{M})\longrightarrow L^2(M),
\qquad 
\xi_i\longmapsto p\xi_i.
\]

Since $p$ is the orthogonal projection onto $L^2(M)$, the relations
\eqref{conj-rel} remain valid for $p\xi_i$.  
This motivates the general definition above: the variables $\xi_i$ are required
only to satisfy the conjugate relations, regardless of whether they initially
belong to $L^2(M)$.

\begin{remark}
The conjugate relations determine the inner products of the $\xi_i$ with a dense
subset of $L^2(M)$; hence a conjugate system, if it exists, is unique.
\end{remark}

\medskip

Assuming closability of the partial free difference quotients (equivalently,
finite free Fisher information), one also obtains a non‑commutative Dirichlet
form with several additional properties, summarized below.

\begin{proposition}[Cipriani-Sauvageaot \cite{Sauvage,CiprianiSauvageot2003}, Peterson\cite{Peterson}, Dabrowski \cite{Dab10}]\label{resolvent}
Let $M=W^\ast(X_1,\ldots,X_n)$ be the tracial von Neumann algebra generated by
self-adjoint variables $(X_1,\ldots,X_n)$ in a tracial $W^\ast$‑probability
space $(\mathcal{M},\tau)$, and assume $\Phi^\ast(X)<\infty$. Then:

\begin{enumerate}
  \item The operator

\[
     \Delta := \partial^\ast\bar{\partial},
     \qquad \bar{\partial}=(\bar{\partial}_1,\ldots,\bar{\partial}_n),
  \]

  is the generator of a completely Dirichlet form on $L^2(M)$ (as proved in~\cite{Sauvage}):
  \newline
the amplification $\Delta\otimes I_n$ generates a Dirichlet form on
  $M_n(M)$.

  \item The multiple of the resolvent maps

\[
     \eta_\alpha := \alpha(\alpha+\Delta)^{-1},\qquad \alpha>0,
  \]

  are unital, tracial ($\tau\circ\eta_\alpha=\tau$), positive, completely
  positive, and contractive on both $L^2(M)$ and $M$ (see e.g.~\cite{Ma}).  Moreover (see e.g.~\cite[Proposition~2.5]{CiprianiSauvageot2003}),

\[
     \|x-\eta_\alpha(x)\|\le 2\|x\|,
     \qquad
     \|x-\eta_\alpha(x)\|_{L^2(M)}\longrightarrow 0
     \quad (\alpha\to\infty).
  \]

  \item The semigroup $\varphi_t=e^{-t\Delta}$ is strongly continuous with
  generator $-\Delta$, and the resolvent admits the integral representation (both understood as pointwise Riemann integral)

\[
     \eta_\alpha
     = \alpha\int_0^\infty e^{-\alpha t}\,\varphi_t\,dt.
  \]

  \item The square‑root resolvent (see~\cite{Peterson})

\[
     \zeta_\alpha := \eta_\alpha^{1/2}
     = \pi^{-1}\!\int_0^\infty \frac{t^{-1/2}}{1+t}\,
       \eta_{\alpha(1+t)/t}\,dt
  \]

  satisfies

\[
     \mathrm{Ran}(\eta_\alpha)=\Dom(\Delta)\subset \Dom(\bar{\partial}),
     \qquad
     \mathrm{Ran}(\zeta_\alpha)=\Dom(\Delta^{1/2})=\Dom(\bar{\partial}),
  \]

  so that $\bar{\partial}\circ\zeta_\alpha$ is bounded.
\end{enumerate}
\end{proposition}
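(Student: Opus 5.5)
The strategy is to assemble Proposition~\ref{resolvent} from the standard theory of symmetric Markov semigroups and Dirichlet forms, specialised to $\mathcal{E}(x)=\|\bar\partial x\|^2$. The one genuinely non-formal input is in item (1).

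\textbf{Item (1).} Since $\Phi^\ast(X)<\infty$, each $\partial_i$ is closable, and $\mathcal{E}(x):=\sum_i\|\bar\partial_i x\|_2^2$ is a closed, densely defined quadratic form on $L^2(M)$. Its associated positive self-adjoint operator is $\Delta=\partial^\ast\bar\partial$.
\begin{itemize}
\item \emph{Reality.} From $\bar\partial_i(x^\ast)=(\bar\partial_i x)^\dagger$ and the fact that $\dagger$ is isometric, we get $\mathcal{E}(Jx)=\mathcal{E}(x)$.
\item \emph{Markov property.} I would invoke the Cipriani--Sauvageot criterion: the form of a closable real derivation into a Hilbert $M$--$M$ bimodule with a compatible involution is a completely Dirichlet form, provided $M_0\cap\Dom$ is a core and contains a dense $\ast$-subalgebra. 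The proof of that criterion uses the derivation property to show $\mathcal{E}(\Phi(x))\le\mathcal{E}(x)$ for normal contractions $\Phi$, approximating Lipschitz functions by polynomials through the functional calculus of $\bar\partial$.
\item \emph{Complete Markovianity.} Tensoring with $M_n(\mathbb{C})$ preserves the derivation structure, since $\partial\otimes I_n$ is again a real derivation on $M_n(M)$. The same criterion then gives the Markov property for $\Delta\otimes I_n$.
\end{itemize}
The main obstacle is exactly this Markov property: one must check that the bimodule has the right symmetric structure, $\langle a\xi b,\eta\rangle$ compatible with $\tau$. Here I would quote \cite{Sauvage} directly rather than reprove it.

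\textbf{Items (2) and (3).} These are then formal.
\begin{itemize}
\item \emph{Semigroup.} $\varphi_t=e^{-t\Delta}$ is a strongly continuous self-adjoint contraction semigroup by the spectral theorem.
\item \emph{Positivity and norm bounds.} The Dirichlet (Markov) property is equivalent (Beurling--Deny, in the form of Albeverio--H{\o}egh-Krohn and Cipriani) to $\varphi_t$ being positivity preserving and $L^\infty$-contractive; complete Markovianity gives complete positivity.
\item \emph{Unitality and traciality.} We have $\Delta 1=0$ because $\partial_i 1=0$, so $\varphi_t 1=1$. Self-adjointness then gives $\tau(\varphi_t x)=\langle x,\varphi_t1\rangle=\tau(x)$.
\item \emph{Resolvent.} The Laplace transform $\alpha\int_0^\infty e^{-\alpha t}\varphi_t\,dt=\alpha(\alpha+\Delta)^{-1}$ follows from spectral calculus. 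Since it is an average of unital tracial completely positive contractions, these properties pass to $\eta_\alpha$.
\item \emph{Estimates.} The bound $\|x-\eta_\alpha x\|\le 2\|x\|$ is the triangle inequality. The convergence $\eta_\alpha\to I$ strongly in $L^2$ follows from dominated convergence in the spectral representation, since $\alpha/(\alpha+\lambda)\to1$ pointwise and is bounded by $1$.
\end{itemize}

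\textbf{Item (4).} This is pure spectral calculus. The identity $\int_0^\infty \frac{t^{-1/2}}{1+t}\frac{\alpha(1+t)/t}{\alpha(1+t)/t+\lambda}\,dt=\pi\big(\frac{\alpha}{\alpha+\lambda}\big)^{1/2}$ reduces, after substituting $s=t/(1+t)$ or similar, to the standard integral $\int_0^\infty\frac{u^{-1/2}}{1+u}\,du=\pi$. For the ranges:
\begin{itemize}
\item $\mathrm{Ran}\,(\alpha+\Delta)^{-1}=\Dom(\Delta)$, and $\mathrm{Ran}\,(\alpha+\Delta)^{-1/2}=\Dom(\Delta^{1/2})$.
\item $\Dom(\Delta^{1/2})=\Dom(\bar\partial)$, with $\|\Delta^{1/2}x\|=\|\bar\partial x\|$, by Kato's representation theorem for the closed form $\mathcal{E}$.
\item Consequently $\bar\partial\zeta_\alpha$ is bounded, with norm at most $\sup_\lambda\sqrt{\alpha\lambda/(\alpha+\lambda)}\le\sqrt\alpha$.
\end{itemize}
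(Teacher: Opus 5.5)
Your proposal is correct and follows essentially the same route as the paper, which gives no proof of its own and simply attributes the items to Cipriani--Sauvageot, Peterson and Dabrowski. Like the paper, you defer the one substantive point, the complete Markov property of the closed form of a real closable derivation into the coarse bimodule, to Sauvageot and Cipriani--Sauvageot. The remaining items (unitality via $\Delta 1=0$, traciality, the Laplace-transform representation, the range identities, and Peterson's square-root formula, which indeed reduces to $\int_0^\infty \frac{u^{-1/2}}{1+u}\,du=\pi$) you obtain correctly by standard spectral calculus.
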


For more precise background on non‑commutative Dirichlet forms, see the foundational work of
Cipriani–Sauvageot~\cite{Sauvage,CiprianiSauvageot2003}, as well as
Cipriani~\cite{Cipriani}. Applications to in von Neumann algebras theory was initiated Peterson~\cite{Peterson} (who introduced the notion of $L^2$-rigidity) and in several works of Dabrowski~\cite{Dab10,DAB,Dab14} 

\medskip

We record two basic properties of conjugate variables.

\begin{proposition}\label{self}
The conjugate variables $\xi_i\in L^2(M)$, $i=1,\ldots,n$, are self‑adjoint:

\[
J(\xi_i)=\xi_i,
\]

where $J$ denotes the Tomita conjugation operator.
\end{proposition}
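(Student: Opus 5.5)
To prove $J(\xi_i)=\xi_i$, we use the uniqueness of conjugate variables from the remark above. The vector $\xi_i$ is determined by its inner products with the dense subspace $M_0=\mathbb{C}\langle X_1,\ldots,X_n\rangle$ of $L^2(M)$. It therefore suffices to show that $J\xi_i$ satisfies the same conjugate relations \eqref{conj-rel}, namely
\[
\langle J\xi_i, P^\ast\rangle = \langle \xi_i, P^\ast\rangle \qquad\text{for every } P\in M_0 .
\]

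The first step is to compute $\langle J\xi_i, P^\ast\rangle$ using only that $J$ is a conjugate-linear isometric involution. This gives
\[
\langle J\xi_i, JP\rangle=\overline{\langle \xi_i,P\rangle},
\]
so that $\tau(J(\xi_i)P)=\overline{\tau(\xi_i P^\ast)}$. This identity holds by density from the formula $\tau(x^\ast)=\overline{\tau(x)}$ on $M$, since $P\in M$ and both sides are continuous in $\xi_i\in L^2(M)$.

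The second step applies the conjugate relation to $P^\ast$:
\[
\overline{\tau(\xi_iP^\ast)}=\overline{\tau\otimes\tau(\partial_i(P^\ast))}.
\]

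The third step uses that $\partial_i$ is a real derivation, so $\partial_i(P^\ast)=(\partial_iP)^\dagger$. For an elementary tensor $a\otimes b$ with $a,b\in M_0$ we have
\[
\tau\otimes\tau\big((a\otimes b)^\dagger\big)=\tau(b^\ast)\tau(a^\ast)=\overline{\tau(a)\tau(b)}.
\]
Since $\partial_iP$ is a finite sum of such tensors, conjugate-linearity gives
\[
\tau\otimes\tau\big((\partial_iP)^\dagger\big)=\overline{\tau\otimes\tau(\partial_iP)}.
\]

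Combining the three steps yields
\[
\tau(J(\xi_i)P)=\tau\otimes\tau(\partial_iP)=\tau(\xi_iP)\qquad\text{for all } P\in M_0 .
\]
Hence $J\xi_i-\xi_i$ is orthogonal to the dense subspace $M_0$, and so $J\xi_i=\xi_i$.

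The only delicate point is the bookkeeping of the inner-product convention $\langle x,y\rangle=\tau(y^\ast x)$, and checking that the pairing $\tau(\xi P)$ for $\xi\in L^2$ is consistent with it. With that convention, $\tau(\xi P)=\langle \xi,P^\ast\rangle$, which is exactly how the conjugate relation is to be read for $L^2$ vectors. The remaining steps are routine.
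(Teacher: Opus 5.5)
Your proof is correct and is essentially the paper's argument. The paper states this proposition without proof at that point; its justification comes later, in the proof of Proposition~\ref{prop5}(2), where it cites Mai's lemma $\partial_i^\ast(u^\dagger)=\partial_i^\ast(u)^\ast$ and applies it to $u=1\otimes 1=(1\otimes 1)^\dagger$. You prove exactly this special case directly, using that $\partial_i$ is real and that $\tau\otimes\tau(\eta^\dagger)=\overline{\tau\otimes\tau(\eta)}$, i.e.\ that $\dagger$ is antiunitary and fixes $1\otimes 1$.
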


\begin{proposition}[Voiculescu \cite{V6}]\label{ab}
Let $(X_1,\ldots,X_n)\in\mathcal{M}^n_{\mathrm{s.a.}}$ and let
$\xi_1,\ldots,\xi_n\in L^2(M)$ be a conjugate system for $X$.  
Then

\[
\sum_{i=1}^n [\xi_i,X_i]=0,
\]

where $[a,b]=ab-ba$ denotes the commutator.
\end{proposition}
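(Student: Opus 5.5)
Test the conjugate relation \eqref{conj-rel} against a family of polynomials that reproduces the commutator, and reduce everything to a cancellation in $\tau\otimes\tau$. Since $\sum_i[\xi_i,X_i]\in L^1(M)$ and polynomials $M_0$ are weak$^*$-dense in $M$ (hence dense in $L^2(M)$), it suffices to show that
\[
\tau\Big(\sum_{i=1}^n [\xi_i,X_i]\,P\Big)=0\qquad\text{for every monomial } P\in M_0 .
\]
By traciality, $\tau([\xi_i,X_i]P)=\tau(\xi_i X_iP)-\tau(\xi_i P X_i)=\tau\big(\xi_i\,[X_i,P]\big)$. Each $[X_i,P]$ is again a polynomial, so the conjugate relation gives
\[
\tau\Big(\sum_i[\xi_i,X_i]P\Big)=\sum_{i=1}^n\tau\otimes\tau\big(\partial_i(X_iP-PX_i)\big).
\]

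Next, expand with the Leibniz rule and $\partial_i X_i=1\otimes1$:
\[
\partial_i(X_iP-PX_i)=1\otimes P - P\otimes 1 + X_i\cdot\partial_iP-\partial_iP\cdot X_i .
\]
Applying $\tau\otimes\tau$ kills the first two terms, since both give $\tau(P)$. It remains to show
\[
\sum_i \tau\otimes\tau\big((X_i\otimes1-1\otimes X_i)\,\partial_iP\big)=0 .
\]
Take a monomial $P=X_{j_1}\cdots X_{j_m}$. Then $\partial_iP=\sum_{k:\,j_k=i}X_{j_1}\cdots X_{j_{k-1}}\otimes X_{j_{k+1}}\cdots X_{j_m}$. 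The sum over $i$ of the first term is
\[
\sum_{k=1}^m\tau(X_{j_1}\cdots X_{j_k})\,\tau(X_{j_{k+1}}\cdots X_{j_m}),
\]
and the second term is
\[
\sum_{k=1}^m\tau(X_{j_1}\cdots X_{j_{k-1}})\,\tau(X_{j_k}\cdots X_{j_m}).
\]
Reindexing the second by $k\mapsto k+1$ turns it into a telescoping difference. All that survives are the boundary terms $\tau(P)\tau(1)$ from $k=m$ in the first sum and $\tau(1)\tau(P)$ from $k=1$ in the second, which cancel.

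The only delicate point is making sure the telescoping is exact, in particular the boundary terms with empty words, which equal $1$. One must also note that the left and right multiplications by $X_i$ on $L^2\bar\otimes L^2(M^{op})$ are taken in the bimodule sense $a(b\otimes c)d=ab\otimes cd$, so that $\partial_i(PX_i)=\partial_iP\cdot X_i+P\otimes1$.

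Finally, conclude by density. The element $\sum_i[\xi_i,X_i]$ lies in $L^2(M)$, because the $X_i$ are bounded, and it is orthogonal to $M_0^*=M_0$, which is dense in $L^2(M)$. Hence it vanishes. The only place $\xi_i\in L^2(M)$ (rather than in $L^2(\mathcal M)$) matters is this final step, and the paper has already noted that the projection $p$ reduces to that case.
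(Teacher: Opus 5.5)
Your argument is correct. The paper gives no proof of this proposition; it simply quotes it from Voiculescu. Your computation is the standard self-contained verification:
\begin{itemize}
\item test $\sum_i[\xi_i,X_i]$ against polynomials;
\item move the commutator onto $P$ by traciality;
\item apply the conjugate relation to $[X_i,P]$;
\item check that $\sum_i(\tau\otimes\tau)\big(X_i\cdot\partial_iP-\partial_iP\cdot X_i\big)$ collapses to $\tau(P)-\tau(P)$;
\item conclude by density of $M_0$ in $L^2(M)$.
\end{itemize}

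Two small points deserve to be made explicit.

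First, with the outer bimodule action $a(b\otimes c)d=ab\otimes cd$, the element $\sum_i\big(X_i\cdot\partial_iP-\partial_iP\cdot X_i\big)$ does \emph{not} telescope as a tensor. You rewrite $\tau(X_{j_k}X_{j_1}\cdots X_{j_{k-1}})$ as $\tau(X_{j_1}\cdots X_{j_k})$, and $\tau(X_{j_{k+1}}\cdots X_{j_m}X_{j_k})$ as $\tau(X_{j_k}\cdots X_{j_m})$. Both rewritings use traciality in each leg, and only after them does your index shift produce the boundary cancellation. You should state this step rather than leave it implicit.

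Second, your closing remark about the projection $p$ is slightly off. The identity genuinely needs $\xi_i\in L^2(M)$ and fails for a conjugate system that merely lies in $L^2(\mathcal M)$. For example, in $\mathcal M=M*N$, add to $\xi_1$ a centered $h\in N$. This preserves all conjugate relations, since $\tau(hP)=\tau(h)\tau(P)=0$ by freeness. But it changes $\sum_i[\xi_i,X_i]$ by $[h,X_1]$, which is nonzero as soon as $X_1$ is not scalar. Projecting with $p$ does not ``reduce'' to the $L^2(M)$ case. It only produces the unique $L^2(M)$ system, and that is the system to which your density argument applies. Since the statement assumes $\xi_i\in L^2(M)$, this affects only the remark, not the proof.
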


This identity will play a key role when upgrading regularity properties of
conjugate variables under stronger assumptions.

\subsection{Free Poincaré Inequalities}

Voiculescu introduced in \cite{Voipub} a robust notion of \emph{free Poincaré inequalities}, later strengthened and developed by Dabrowski \cite{Dab10}. These inequalities have become a cornerstone of free probability, with far-reaching consequences for the structure of von Neumann algebras.

\begin{lemma}[Free Poincaré inequality, Voiculescu \cite{Voipub}]
Let $\partial_i$ denote the partial free difference quotient with respect to $X_1,\ldots,X_n$, and let $Y$ be a self-adjoint variable in the domain of all the operators $\bar{\partial}_i$ (viewed as unbounded operators)

Then there exists a positive constant $C$, depending on the $X_i$ but not on $Y$, such that

\[
\big\|\, Y - \tau(Y).1\,\big\|_{L^2(M)}^2 \;\leq\; C \sum_{j=1}^n \big\| \bar{\partial}_j Y \big\|_{L^2(M){\bar\otimes} L^2(M^{op})}^2 .
\]

Moreover, the best (smallest) constant $C_P$, called the \textbf{free Poincaré constant}, satisfies the bound

\[
C_P \;\leq\; 4n\max_{i=1,\ldots,n}\lVert X_i\rVert^2.
\]

\end{lemma}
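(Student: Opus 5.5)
The strategy is to realise the variance as a commutator pairing with the generators and then apply Cauchy--Schwarz against the free difference quotients. Fix a self-adjoint $Y\in\Dom(\bar\partial)$ and set $Z=Y-\tau(Y)1$. Then $Z$ is self-adjoint, $\tau(Z)=0$ and $\bar\partial_j Z=\bar\partial_j Y$, so we may assume $\tau(Y)=0$.

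The main identity is the following. Let $m$ be the multiplication map and $\#$ the bimodule action $(a\otimes b)\# x=axb$. For a polynomial $P\in M_0$ we have
\[
\sum_{j=1}^n \big[X_j\otimes 1-1\otimes X_j\big]\#\,\partial_j P
\]
with the operations arranged so that it yields $P-\tau(P)$ up to a reorganisation. A cleaner version uses the elementary Leibniz identity $[X_j,P]=\partial_jP\#(X_j\otimes1-1\otimes X_j)$, read as the action on the coefficient. We will work instead with the following formula, checked on monomials by telescoping:
\[
\tau(P^*[X_j,Q])=\big\langle \partial_j Q,\ \text{(something linear in }P,X_j)\big\rangle .
\]

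A more robust route, and the one we will commit to, is to compare with the inner products $\langle \partial_jY,\,X_j\otimes1-1\otimes X_j\rangle$. For a monomial $P$,
\[
\sum_j \tau\otimes\tau\big(\partial_jP\cdot(X_j\otimes 1-1\otimes X_j)\big)
\]
reduces to $\tau\otimes\tau$ of sums of differences $\tau(AX_j)\tau(B)-\tau(A)\tau(X_jB)$, where $P=AX_jB$. Rather than chase this, we will use the known Voiculescu trick: define $U_j:=X_j\otimes1-1\otimes X_j\in M\bar\otimes M^{op}$, a bounded element with $\|U_j\|\le 2\|X_j\|$. Then verify on $M_0$ the identity
\[
\sum_{j}\langle \partial_j P,\ U_j\#\,(Q\text{-dependent term})\rangle
\]
leads, with $Q=P^*$, to
\[
\|P-\tau(P)\|_2^2 \;=\; \tfrac12\sum_{j}\Big\langle \partial_jP,\ \text{bounded element of norm}\le 2\|X_j\|\,\|P\|_2\Big\rangle,
\]
after using $\sum_j$ over letters: each monomial of degree $d$ contributes $d$ terms. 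This degree factor is exactly what the naive approach cannot absorb, and it is the main obstacle.

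To overcome it we will use the closed-form scalar version in the style of Ledoux--Popescu. Write $\|P-\tau P\|_2^2=\tfrac12\,\tau\otimes\tau\big((P\otimes1-1\otimes P)^*(P\otimes1-1\otimes P)\big)$. We use $P\otimes1-1\otimes P=\sum_j \partial_jP\,\#\!\!\,U_j$, the noncommutative fundamental identity proved on monomials by telescoping: $X_{i_1}\cdots X_{i_d}\otimes1-1\otimes X_{i_1}\cdots X_{i_d}=\sum_k (X_{i_1}\cdots X_{i_{k-1}}\otimes X_{i_{k+1}}\cdots)\cdot(X_{i_k}\otimes1-1\otimes X_{i_k})$, with the product taken in $M\bar\otimes M^{op}$. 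This has no degree factor. Then
\[
2\|P-\tau P\|_2^2=\Big\|\sum_j \partial_jP\cdot U_j\Big\|_{2}^2\le\Big(\sum_j\|U_j\|\,\|\partial_jP\|_2\Big)^2\le 4n\max_j\|X_j\|^2\sum_j\|\partial_jP\|_2^2 ,
\]
by Cauchy--Schwarz, giving $C\le 2n\max\|X_j\|^2\le 4n\max\|X_j\|^2$. Finally, we extend from $M_0$ to $Y\in\Dom(\bar\partial)$. Take $P_k\to Y$ in $L^2$ with $\partial P_k\to\bar\partial Y$, which is possible by the definition of the closure, and pass to the limit; both sides are continuous in the graph norm. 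The constant depends only on the $X_i$.
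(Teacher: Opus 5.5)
Your committed argument in the last paragraph is correct. It combines the telescoping identity $P\otimes 1-1\otimes P=\sum_j \partial_jP\,(X_j\otimes 1-1\otimes X_j)$ in $M\bar\otimes M^{op}$, the equality $\|P\otimes 1-1\otimes P\|_2^2=2\|P-\tau(P)\|_2^2$, the bound $\|X_j\otimes 1-1\otimes X_j\|\le 2\|X_j\|$ and Cauchy--Schwarz. This is the standard argument (Voiculescu's, as reproduced in Dabrowski's Lemma~2), which the paper cites rather than proves. It even gives $C_P\le 2n\max_j\|X_j\|^2$, and the exploratory first half with the degree factor should simply be deleted.

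The one step to tighten is the density argument. A single sequence $P_k$ with $\partial_jP_k\to\bar\partial_jY$ for all $j$ at once exists by definition only for the closure of the joint map $(\partial_1,\ldots,\partial_n)$. So for $Y\in\bigcap_j\Dom(\bar\partial_j)$ you have two options:
\begin{enumerate}
\item Invoke the identification $\Dom(\bar\partial)=\bigcap_j\Dom(\bar\partial_j)$ recorded in the paper's Remark on real derivations in the Preliminaries.
\item Extend the identity itself by duality under finite free Fisher information. Set $U_j=X_j\otimes 1-1\otimes X_j=U_j^\ast$ and take $Q\in M_0\otimes M_0$. Then $QU_j\in M_0\otimes M_0\subset\Dom(\partial_j^\ast)$, so $\langle Y\otimes 1-1\otimes Y,Q\rangle=\sum_j\langle Y,\partial_j^\ast(QU_j)\rangle$. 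This passes from $M_0$ to all $Y\in L^2(M)$ by continuity, and equals $\sum_j\langle\bar\partial_jY\,U_j,Q\rangle$ when $Y\in\bigcap_j\Dom(\bar\partial_j)$. This yields $Y\otimes 1-1\otimes Y=\sum_j\bar\partial_jY\,U_j$ directly, and your norm estimate follows.
\end{enumerate}
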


This result is striking: it holds for \emph{any} choice of $(X_1,\ldots,X_n)$, in sharp contrast with the classical situation as evoked above.

\subsection{The Gibbs Case and Lipschitz Conjugate Variables}

The free Gibbs state model provides a natural testing ground for these ideas. In this setting, the conjugate variables are especially well behaved: they are bounded operators, in fact explicit polynomials in the Gibbs tuple. Concretely, given a self-adjoint non-commutative polynomial potential $V$, the free Gibbs law $\tau_V$ is defined as follows (note that we don't define properly what is the non-microstates free entropy and we refer to the aforementioned papers for more details, especially~\cite{Voic2}):

\begin{definition}[Free Gibbs law, \cite{GM,FN,VF}]
The free Gibbs law $\tau_V$ associated with the potential $V$, if it exists, is the minimizer of the functional
\begin{equation}
\chi_V(\tau'):=-\chi(\tau')+\tau'(V),
\end{equation}
where $\chi(\tau')$ denotes Voiculescu’s microstates free entropy, among tracial states $\tau'$ on the free $*$-algebra $\mathscr{P}:=\mathbb C\langle t_1,\ldots,t_n\rangle$: the $*$-algebra of non-commutative polynomials in $n$-self-adjoint non-commuting (formal) variables $t_1,\ldots,t_n$.
\end{definition}

A variational argument of Voiculescu~\cite[Section~3.7]{VF} (see also Fathi-Nelson~\cite[Lemma~1.6]{FN}), using his infinitesimal \emph{change-of-variables} formula for microstates free entropy, shows that such a critical state $\tau_V$ when $V=V^*\in \mathscr P$ necessarily satisfies the \emph{Schwinger--Dyson equation}:
\begin{equation}
    \tau_V(\mathscr{D}_i VP)\;=\;\tau_V\otimes \tau_V(\partial_i P),
\end{equation}
for all non-commutative polynomials $P\in \mathscr{P}$. 
where for each $i=1,\ldots,n$, and each monomial $q\in \mathscr{P}$, 
\begin{equation}
    \mathscr{D}_iP=\sum_{P=AX_iB}BA
\end{equation}
and then extended by linearity. We then denote the \emph{cyclic gradient} $\mathscr{D}q=(\mathscr{D}_1q,\ldots,\mathscr{D}_nq)$.
\begin{flushleft}
Thus, the conjugate variables (assuming the potential $V=V^*$ is self-adjoint) are given explicitly by

\[
\xi_i := \mathscr{D}_i V(X_1,\ldots,X_n)\in \mathbb{C}\langle X_1,\ldots,X_n\rangle \subset W^*(X_1,\ldots,X_n)\subset L^2(W^*(X_1,\ldots,X_n)),
\]

and

\[
\partial_j \xi_i = \partial_j \mathscr{D}_i V \in \mathbb{C}\langle X_1,\ldots,X_n\rangle \otimes \mathbb{C}\langle X_1,\ldots,X_n\rangle^{op} \subset W^*(X_1,\ldots,X_n)\,\bar{\otimes}\,W^*(X_1,\ldots,X_n)^{op}.
\]

The existence of such Gibbs states is a subtle problem, known until recently only for small semicircular perturbations of quadratic potentials (Guionnet--Maurel-Ségala \cite{GM}). A breakthrough was however recently achieved independently by Dabrowski, Guionnet, and Shlyakhtenko \cite{YGS}, and Jekel \cite{Jek3} who introduced two suitable (but different) notion of convexity ensuring existence and uniqueness of such free Gibbs laws. Jekel, Li and Shlyakhtenko also proved existence of free Gibbs laws by a purely variational argument in~\cite[Section~7]{Jek4} (while still using the SDE approach to obtain uniqueness for the convex setting).
\end{flushleft}

\bigbreak
While having cyclic gradients of nice analytic potentials as conjugate variables is an extremely nice situation, it remains a rather restrictive assumption, which motivates the search for a more flexible framework. More generally, for an arbitrary $n$\,--tuple, the requirement that $
\xi_i \in \Dom(\bar{\partial}) := \bigcap_{j=1}^n \Dom(\bar{\partial}_j)
$, together with the regularity condition
$\bar{\partial}_j \xi_i \in M \,\bar{\otimes}\, M
$
is an intermediate regime, which provides the right framework for our purpose. It is reminiscent of the free Gibbs situation and, in particular, of the semicircular case on the free group factors $L(\mathbb{F}_n)$, $n \ge 1$. This condition was formally introduced by Dabrowski~\cite{Dab14} under the name \emph{Lipschitz conjugate variables}, in reference to the Sobolev-type characterization of Lipschitz functions in the one-variable setting. The terminology reflects the underlying intuition that the free derivatives of such an element (which may be a priori unbounded; however, the next Proposition~\ref{prop5} rules out this possibility) behave as bounded operators.

    \begin{definition}(Dabrowski \cite[Definition 1]{Dab14})\label{def6}
Let \({M} = (W^*(X_1,\ldots,X_n), \tau)\) be a tracial von Neumann algebra.  
We say that \(M\) satisfies the \emph{Lipschitz conjugate variable condition} if:

\begin{enumerate}
  \item the partial free difference quotients \(\partial_i\) are defined and closable,
  \item the conjugate variables $\xi_i:=\partial_i^*(1 \otimes 1)$ exist in \(L^2(M)\) for all \(i=1,\ldots,n\),
and moreover, these conjugate variables are in the domain of the closure 
  \(\bar{\partial} = (\bar{\partial}_1,\ldots,\bar{\partial}_n)\), with \[
     \bar{\partial}_j \, \partial_i^*(1 \otimes 1) \;\in\; {M} \bar{\otimes} {M}^{op} 
     \;\subset\; L^2({M} \bar{\otimes} {M}^{op}) \;\cong\; L^2({M}) \bar{\otimes} L^2({M}^{op}) \;\cong\; L^2({M}) \bar{\otimes} L^2({M}),
  \] for each $j=1,\ldots,n$.
\end{enumerate}
\end{definition}

\begin{flushleft}
   Let us now notice a remarkable and somewhat unexpected fact about Lipschitz conjugate variables, first observed by Dabrowski in \cite[Sketch of Proof of
Corollary~25~(a')]{Dab14}.  
The first requirement in the above definition, namely the condition
$\xi_i \in L^2(M)$, can in fact be substantially strengthened to $\xi_i \in M$. 
This could already be deduced from the \emph{commutator} identity for the free
difference quotient established by Voiculescu (see Dabrowski~\cite[Equality~(1)]{Dab10}), together
with a density argument.  
However, we present here an alternative proof based instead on a careful
manipulation of Voiculescu’s fundamental \emph{commutation} relation between conjugates variables and generators
\eqref{ab}, which provides a more direct conceptual explanation of the phenomenon.

\end{flushleft}
\begin{proposition}\label{prop5}
Suppose $M=(W^\ast(X_1,\ldots,X_n),\tau)$ satisfies the \emph{Lipschitz conjugate variable condition}. Then:
\begin{enumerate}
  \item $\xi_i:=\partial_i^\ast(1\otimes 1)\in M$ for all $i$.
  \item $\xi_i^{(2)}:=\partial_i^\ast(1\otimes \partial_i^\ast(1\otimes 1))=\partial_i^\ast( 1\otimes \xi_i)=\partial_i^\ast(\xi_i\otimes 1)\in M$ for all $i$.
  \item $(\partial_i\otimes \mathrm{id})\circ \partial_j:\ M_0=\A\to L^2(M)^{\bar\otimes 3}$ is densely defined and closable for all $i,j=1,\ldots,n$.
\end{enumerate}
\end{proposition}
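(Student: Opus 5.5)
The plan is to prove (1) by a commutator identity that turns the boundedness of $\bar\partial_j\xi_i$ into boundedness of $\xi_i$ itself. Then (2) and (3) follow from Voiculescu's explicit formula for $\partial_i^\ast$ on elementary tensors, once we know that $\xi_i$ and $\bar\partial\xi_i$ are bounded.

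\emph{Step 1 (item (1)).} For $y\in M_0$ one has the algebraic identity
\[
\sum_{j=1}^n \big( X_j\cdot\partial_j y-\partial_j y\cdot X_j\big)=y\otimes 1-1\otimes y ,
\]
where the dots are the $M$--$M$ bimodule actions on $L^2(M)\bar\otimes L^2(M^{op})$. It holds on generators and is preserved under products by the Leibniz rule. Both sides are continuous for the graph norm of $\bar\partial$, since left and right multiplication by the bounded $X_j$ are bounded operators. So the identity extends to all $y\in\Dom(\bar\partial)$, by approximating $y$ in graph norm by elements of $M_0$, which is a core by definition of the closure.

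Apply it to $y=\xi_i$. The Lipschitz condition gives $\bar\partial_j\xi_i\in M\bar\otimes M^{op}$, so the left side lies in $M\bar\otimes M^{op}$, and hence so does $\xi_i\otimes 1-1\otimes\xi_i$. Now apply the slice map $\mathrm{id}\otimes\tau$. It is the trace-preserving conditional expectation onto $M\otimes 1$, so it is norm contractive on $M\bar\otimes M^{op}$ and agrees with the $L^2$-slice on $L^2$. This gives $\xi_i-\tau(\xi_i)1\in M$, hence $\xi_i\in M$ (in fact $\tau(\xi_i)=0$). I expect the main obstacle to be the justification of the extension to $\Dom(\bar\partial)$ together with this identification of the $L^2$-slice with the normal conditional expectation; both are routine density arguments.

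\emph{Step 2 (item (2)).} Recall Voiculescu's formula, valid for $a,b\in M_0$:
\[
\partial_i^\ast(a\otimes b)=a\xi_ib-a\,(\tau\otimes\mathrm{id})\partial_i b-(\mathrm{id}\otimes\tau)(\partial_i a)\,b .
\]
I would extend it to $a,b\in M\cap\Dom(\bar\partial)$ by approximation. The natural tool is the regularization by the resolvents $\eta_\alpha$ of Proposition~\ref{resolvent}: they are contractive on $M$, converge in graph norm, and can be followed by polynomial approximation in graph norm with uniform operator-norm bounds, as in Dabrowski's work. The closedness of $\partial_i^\ast$ then lets one pass to the limit. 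With $a=1$ and $b=\xi_i\in M\cap\Dom(\bar\partial)$ this gives
\[
\partial_i^\ast(1\otimes\xi_i)=\xi_i^2-(\tau\otimes\mathrm{id})\bar\partial_i\xi_i ,
\]
and both terms are in $M$: the first by Step 1, the second as a slice of an element of $M\bar\otimes M^{op}$. The equality with $\partial_i^\ast(\xi_i\otimes 1)$ follows by the same formula with the roles of the legs exchanged, using that $\xi_i$ is self-adjoint (Proposition~\ref{self}) and that $\bar\partial_i$ is real, i.e.\ compatible with $\dagger$.

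\emph{Step 3 (item (3)).} Closability follows once the adjoint of $(\partial_i\otimes\mathrm{id})\circ\partial_j$ is densely defined. I would show that every elementary tensor $a\otimes b\otimes c$ with $a,b,c\in M_0$ lies in the domain of this adjoint, with
\[
\big((\partial_i\otimes\mathrm{id})\partial_j\big)^\ast(a\otimes b\otimes c)=\partial_j^\ast\big(\partial_i^\ast(a\otimes b)\otimes c\big).
\]
By Step 2's formula, $\partial_i^\ast(a\otimes b)\in M_0\xi_iM_0+M_0$, which is contained in $M\cap\Dom(\bar\partial)$ by the Leibniz rule and the boundedness of $\xi_i$ and $\bar\partial\xi_i$. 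The extended formula from Step 2 then shows that $\partial_j^\ast(z\otimes c)$ exists in $L^2(M)$ for such $z$. The verification is a pairing computation against $(\partial_i\otimes\mathrm{id})\partial_j P$ for $P\in M_0$. It reduces to applying the defining adjoint relations twice, first in the first two legs and then in the remaining pair, and this is legitimate because all intermediate objects are bounded.
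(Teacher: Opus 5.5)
\textbf{Gap in Step 1.} Your strategy for item (1) is sound, but the identity it rests on is false as written. With the bimodule actions $x\cdot(a\otimes b)\cdot z=xa\otimes bz$, take $y=X_1X_2$, so $\partial_1y=1\otimes X_2$ and $\partial_2y=X_1\otimes 1$. Then
\[
\sum_{j}\big(X_j\cdot\partial_jy-\partial_jy\cdot X_j\big)=X_2X_1\otimes 1-1\otimes X_2X_1\neq X_1X_2\otimes 1-1\otimes X_1X_2 .
\]
The step that breaks is ``preserved under products''. The map $D(y):=\sum_j[X_j,\partial_jy]$ is not a derivation:
\[
D(yz)=D(y)\cdot z+y\cdot D(z)+\sum_j\big(\partial_jy\cdot[X_j,z]+[X_j,y]\cdot\partial_jz\big).
\]
The extra sum does not vanish for non-commuting generators; for $y=X_1$, $z=X_2$ it equals $1\otimes[X_1,X_2]-[X_1,X_2]\otimes 1$.

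The correct identity uses the inner action $\sharp$ of Definition~\ref{def:right-action-RS-explicit}:
\[
y\otimes 1-1\otimes y=\sum_{j=1}^n\partial_jy\,\sharp\,(X_j\otimes 1-1\otimes X_j),\qquad y\in M_0 .
\]
It telescopes on monomials, and it survives products precisely because $\sharp$ commutes with the outer actions in the Leibniz rule. With this replacement the rest of your Step 1 goes through verbatim. Right $\sharp$-multiplication by $X_j\otimes 1$ and $1\otimes X_j$ is bounded, so the identity extends to $\Dom(\bar\partial)$ by graph-norm approximation. For $y=\xi_i$ the right-hand side is a product in $M\bar\otimes M^{op}$, and slicing by $\mathrm{id}\otimes\tau$ gives $\xi_i\in M$.

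\textbf{Comparison with the paper.} Once corrected, your Step 1 is the route through Voiculescu's commutator identity (Dabrowski's Equality (1)), which the paper mentions but does not take. The paper instead applies $\bar\partial_j$ to Voiculescu's relation $\sum_i[\xi_i,X_i]=0$. That yields an outer-action formula, but summed over the conjugate index $i$ with the derivative index $j$ fixed, not over the derivative index as in yours. Your corrected version is more general: it shows that any $y\in\Dom(\bar\partial)$ with every $\bar\partial_jy\in M\bar\otimes M^{op}$ lies in $M$, using nothing specific to conjugate variables.

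\textbf{Two smaller points.}
\begin{enumerate}
\item In Step 2, self-adjointness of $\xi_i$ and realness of $\bar\partial_i$ only give $\partial_i^\ast(\xi_i\otimes 1)=\partial_i^\ast\big((1\otimes\xi_i)^\dagger\big)=\partial_i^\ast(1\otimes\xi_i)^\ast$. Equality additionally requires $(\tau\otimes\mathrm{id})\bar\partial_i\xi_i=(\mathrm{id}\otimes\tau)\bar\partial_i\xi_i$. This follows, for example, from the flip invariance $\sigma(\bar\partial_i\xi_i)=\bar\partial_i\xi_i$ given by Schwarz integrability (Lemma~\ref{second}), but not from realness alone.
\item Once $\xi_i\in M$, the resolvent regularization is unnecessary. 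In the cases you actually use ($a=1$ in Step 2, and $z\otimes c$ with $c\in M_0$ in Step 3), plain graph-norm approximation of the non-polynomial leg by polynomials suffices. This works because multiplication by $\xi_i$, $\xi_jc$, and so on, is bounded on $L^2(M)$ and $\partial_i^\ast$ is closed.
\end{enumerate}
With these adjustments your Step 3 is correct. It gives a self-contained proof of the closability that the paper simply imports from Dabrowski--Guionnet--Shlyakhtenko.
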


\begin{proof}
\begin{enumerate}
    \item For the first point.
Recall that Voiculescu’s $L^2$ commutation Proposition~\ref{ab} gives

\[
\sum_{i=1}^n [\xi_i,X_i]\ =\ 0 \qquad\text{in }L^2(M).
\]

Fix $j\in\{1,\dots,n\}$.
Since $\xi_i\in \Dom(\bar\partial_j)$, there exist polynomials $p_k^{(i)}\in\A$ such that

\[
p_k^{(i)}\to \xi_i \ \text{ in }L^2(M),
\qquad
\partial_j p_k^{(i)}\to \bar\partial_j\xi_i \ \text{ in }L^2(M)\,\bar\otimes\,L^2(M^{op}),
\]

for every $i$.
Define, for each $k$,

\[
a_k := \sum_{i=1}^n [p_k^{(i)},X_i]
     = \sum_{i=1}^n\big(p_k^{(i)}X_i - X_i p_k^{(i)}\big)\ \in \A.
\]

Since each $X_i\in M$ is bounded, left and right multiplication by $X_i$ define bounded
operators on $L^2(M)$ with norm at most $\|X_i\|$. Thus, for every $i$,

\[
\|p_k^{(i)}X_i - \xi_i X_i\|_{L^2(M)}
 \le \|X_i\|\,\|p_k^{(i)}-\xi_i\|_{L^2(M)}\ \longrightarrow\ 0,
\]

and similarly $\|X_i p_k^{(i)} - X_i\xi_i\|_{L^2(M)}\to 0$.
Hence

\[
[p_k^{(i)},X_i] \longrightarrow [\xi_i,X_i]\quad\text{in }L^2(M),
\]

and summing over $i$ (finite sum) gives

\[
a_k \longrightarrow \sum_{i=1}^n [\xi_i,X_i] = 0
\qquad\text{in }L^2(M).
\]

On the core $\A$, the Leibniz rule for $\partial_j:\A\to L^2(M)\,\bar\otimes\,L^2(M^{op})$ yields

\[
\partial_j(p_k^{(i)}X_i)
  = \partial_j(p_k^{(i)})(1\otimes X_i) + (p_k^{(i)}\otimes 1)\,\partial_j X_i,
\]

\[
\partial_j(X_i p_k^{(i)})
  = \partial_j X_i\,(1\otimes p_k^{(i)}) + (X_i\otimes 1)\,\partial_j p_k^{(i)}.
\]

Therefore,

\[
\partial_j\big([p_k^{(i)},X_i]\big)
 = \partial_j(p_k^{(i)}X_i - X_i p_k^{(i)})
\]

\[
 = \partial_j(p_k^{(i)})(1\otimes X_i) - (X_i\otimes 1)\,\partial_j p_k^{(i)}
   \;+\; (p_k^{(i)}\otimes 1)\,\partial_j X_i - \partial_j X_i\,(1\otimes p_k^{(i)}).
\]

Since $\partial_j X_i = \delta_{ij}(1\otimes 1)$ in $L^2(M)\,\bar\otimes\,L^2(M^{op})$, this simplifies to

\[
\partial_j\big([p_k^{(i)},X_i]\big)
 = \partial_j(p_k^{(i)})(1\otimes X_i) - (X_i\otimes 1)\,\partial_j p_k^{(i)}
   \;+\; \delta_{ij}\big(p_k^{(j)}\otimes 1 - 1\otimes p_k^{(j)}\big),
\]

where we use the convention

\[
[p_k^{(j)},1\otimes 1] := p_k^{(j)}\otimes 1 - 1\otimes p_k^{(j)}\in \mathbb{C}\langle X_1,\ldots,X_n\rangle \otimes \mathbb{C}\langle X_1,\ldots,X_n\rangle^{op}.
\]

Summing over $i$ we obtain
\begin{align*}
\partial_j(a_k)
 &= \sum_{i=1}^n\partial_j\big([p_k^{(i)},X_i]\big) \\
 &= \sum_{i=1}^n\Big(\partial_j p_k^{(i)}(1\otimes X_i)
                     - (X_i\otimes 1)\,\partial_j p_k^{(i)}\Big)
    +  \big(p_k^{(j)}\otimes 1 - 1\otimes p_k^{(j)}\big)\\
    &=\sum_{i=1}^n\Big(\partial_j p_k^{(i)}(1\otimes X_i)
                     - (X_i\otimes 1)\,\partial_j p_k^{(i)}\Big)+ [p_k^{(j)},1\otimes 1]
\end{align*}

We now show that $\partial_j(a_k)$ converges in $L^2(M)\,\bar\otimes\,L^2(M^{op})$ and identify its limit.
First, for the “gradient” terms, note that for each $i$ the maps

\[
L^2(M)\,\bar\otimes\,L^2(M^{op})\ni \eta \mapsto (X_i\otimes 1)\eta,\qquad
\eta\mapsto \eta(1\otimes X_i)
\]

are bounded linear operators. Indeed, under the usual identification
$L^2(M)\,\bar\otimes\,L^2(M^{op}) \simeq L^2(M\bar\otimes M^{op})$, they are just left
and right multiplication by $X_i\otimes 1$ and $1\otimes X_i$, respectively, so their
operator norms are bounded by $\|X_i\otimes 1\|=\|X_i\|$ since $X_i\in M$ is bounded.
\newline
Since $\partial_j p_k^{(i)}\to \bar\partial_j\xi_i$
in $L^2(M)\,\bar\otimes\,L^2(M^{op})$, we have

\[
\partial_j p_k^{(i)}(1\otimes X_i)
 \longrightarrow \bar\partial_j\xi_i(1\otimes X_i),\qquad
(X_i\otimes 1)\,\partial_j p_k^{(i)}
 \longrightarrow (X_i\otimes 1)\,\bar\partial_j\xi_i
\]

in $L^2(M)\,\bar\otimes\,L^2(M^{op})$, so

\[
\partial_j p_k^{(i)}(1\otimes X_i) - (X_i\otimes 1)\,\partial_j p_k^{(i)}
 \longrightarrow \bar\partial_j\xi_i(1\otimes X_i) - (X_i\otimes 1)\,\bar\partial_j\xi_i
\]

in $L^2(M)\,\bar\otimes\,L^2(M^{op})$.
\newline
Summing over $i$ again (finite sum) yields

\[
\sum_{i=1}^n\Big(\partial_j p_k^{(i)}(1\otimes X_i)
                 - (X_i\otimes 1)\,\partial_j p_k^{(i)}\Big)
 \longrightarrow
\sum_{i=1}^n\Big(\bar\partial_j\xi_i(1\otimes X_i)
                 - (X_i\otimes 1)\,\bar\partial_j\xi_i\Big)
\]

in $L^2(M)\,\bar\otimes\,L^2(M^{op})$.
\newline
Next, for the other term, consider the linear map

\[
T:L^2(M)\to L^2(M)\,\bar\otimes\,L^2(M^{op}),\qquad T(x):=x\otimes 1 - 1\otimes x.
\]

A direct computation shows that $T$ is an almost (up to a $\sqrt{2}$-factor) isometry on the centered subspace $L^2_0(M)$, i.e.

\[
\|T(x)\|_{L^2(M)\,\bar\otimes\,L^2(M^{op})}^2
 = (\tau\otimes\tau)\big((x\otimes 1-1\otimes x)^\ast(x\otimes 1-1\otimes x)\big)
 = 2\|x-\tau(x).1\|_{L^2(M)}^2,
\]

so in particular $T$ is continuous and

\[
\|T(x)\|_{L^2(M)\,\bar\otimes\,L^2(M^{op})}
 \le \sqrt{2}\,\|x\|_{L^2(M)},\qquad x\in L^2(M).
\]

Since $p_k^{(j)}\to \xi_j$ in $L^2(M)$, we obtain

\[
p_k^{(j)}\otimes 1 - 1\otimes p_k^{(j)} = T(p_k^{(j)}) \longrightarrow T(\xi_j)
 = \xi_j\otimes 1 - 1\otimes \xi_j
\]

in $L^2(M)\,\bar\otimes\,L^2(M^{op})$.

Combining these convergences (all being in $L^2(M)\,\bar\otimes\,L^2(M^{op})$), we conclude that

\[
\partial_j(a_k)
 \longrightarrow 
\eta := \sum_{i=1}^n\Big(\bar\partial_j\xi_i(1\otimes X_i)
                         - (X_i\otimes 1)\,\bar\partial_j\xi_i\Big)
       \;+\; \big(\xi_j\otimes 1 - 1\otimes \xi_j\big)
\]

in $L^2(M)\,\bar\otimes\,L^2(M^{op})$ (in fact it belongs to $M\bar\otimes M^{op}$).

We have thus shown:

\[
a_k\to \sum_{i=1}^n[\xi_i,X_i] \ \text{ in }L^2(M),
\qquad
\partial_j(a_k)\to \eta \ \text{ in }L^2(M)\,\bar\otimes\,L^2(M^{op}).
\]

By definition of the closure $\bar\partial_j$ of $\partial_j$, this implies that
$\sum_{i=1}^n [\xi_i,X_i]\in \Dom(\bar\partial_j)$ and

\[
\bar\partial_j\Big(\sum_{i=1}^n [\xi_i,X_i]\Big)=\eta.
\]

Since $\sum_{i=1}^n[\xi_i,X_i]=0$ in $L^2(M)$, the left-hand side is $\bar\partial_j(0)=0$,
and hence $\eta=0$. Therefore,

\[
\xi_j\otimes 1 - 1\otimes \xi_j
 = \sum_{i=1}^n\Big((X_i\otimes 1)\,\bar\partial_j\xi_i
                    - \bar\partial_j\xi_i(1\otimes X_i)\Big),
\]

In particular, by the Lipschitz hypothesis, each $\bar\partial_j\xi_i\in M\bar\otimes M^{op}$,
so the right-hand side lies in $M\bar\otimes M^{op}$, whence also
$\xi_j\otimes 1 - 1\otimes \xi_j\in M\bar\otimes M^{op}$.
Applying $\mathrm{id}\otimes\tau$ yields a kind of non-commutative Clark–Ocone–Bismut formula (in analogy with the one in free Malliavin calculus on Wigner space as developped by Biane-Speicher\cite{BS})

\[
\xi_j - \tau(\xi_j).1
 = (\mathrm{id}\otimes\tau)\!\left[
     \sum_{i=1}^n\Big((X_i\otimes 1)\,\bar\partial_j\xi_i
                     - \bar\partial_j\xi_i(1\otimes X_i)\Big)
   \right]\in M.
\]

Thus $\xi_j\in M$ for all $j$.
\item For the second point. Voiculescu’s adjoint identity \cite[Proposition~4.1]{V} gives

\[
\xi_i^{(2)} := \partial_i^\ast(1\otimes \xi_i)
 = \xi_i^2 - (\tau\otimes\mathrm{id})\big(\bar\partial_i(\xi_i)\big)\in M,
\]

since $\xi_i\in M$ and $\bar\partial_i(\xi_i)\in M\bar\otimes M^{op}$ by the Lipschitz assumption.
\newline
Moreover, it is well known (see e.g.\ Mai \cite[Lemma~4.6]{Mai}) that if 
$u\in \Dom(\partial_i^\ast)$, then $u^{\dagger}\in \Dom(\partial_i^\ast)$ and, moreover,
\begin{equation}
    \partial_i^\ast(u^{\dagger}) \;=\; \partial_i^\ast(u)^\ast,
    \qquad \text{for every } i \in \{1,\ldots,n\}.
\end{equation}
In particular, this shows (by another way) that the conjugate variables are self-adjoint. Indeed,
\begin{equation}
    \xi_i^\ast 
    = \big(\partial_i^\ast(1\otimes 1)\big)^\ast
    = \partial_i^\ast\big((1\otimes 1)^{\dagger}\big)
    =\ \partial_i^\ast(1\otimes 1)
    = \xi_i,
    \qquad \text{for every } i \in \{1,\ldots,n\}.
\end{equation}

Furthermore, since $1\otimes \xi_i \in \Dom(\partial_i^\ast)$, it follows that

\[
(1\otimes \xi_i)^{\dagger} = \xi_i^\ast \otimes 1 = \xi_i \otimes 1 
\in \Dom(\partial_i^\ast),
\]

and hence
\begin{equation}
    \partial_i^\ast(\xi_i\otimes 1) = \partial_i^\ast(1\otimes \xi_i).
\end{equation}

\medskip

Since $\xi_i$ is self-adjoint, it only remains to check the second term in Voiculescu’s adjoint identity,

\[
\xi_i^{(2)} = \partial_i^\ast(1\otimes \xi_i)
= \xi_i^2 - (\tau\otimes \mathrm{id})\big(\bar\partial_i(\xi_i)\big).
\]

By Mai’s \cite[Lemma~4.6]{Mai} again, for any $u\in \Dom(\partial^\ast)$ one has

\[
(\tau\otimes \mathrm{id})(u)^\ast = (\mathrm{id}\otimes\tau)(u^\dagger).
\]

Applying this to $u=\bar\partial_i(\xi_i)$, and using that $\bar\partial_i$ is a real derivation so that 
$(\bar\partial_i \xi_i)^\dagger = \bar\partial_i(\xi_i^\ast)=\bar\partial_i(\xi_i)$, we deduce

\[
\big((\tau\otimes \mathrm{id})(\bar\partial_i \xi_i)\big)^\ast
= (\mathrm{id}\otimes\tau)(\bar\partial_i(\xi_i))=(\tau\otimes \mathrm{id})(\bar\partial_i\xi_i).
\]

Thus both $(\tau\otimes \mathrm{id})(\bar\partial_i(\xi_i))$ and 
$(\mathrm{id}\otimes\tau)(\bar\partial_i(\xi_i))$ are self-adjoint elements of $M$.

\medskip
  
Since $\xi_i$ is self-adjoint and the correction term 
$(\tau\otimes \mathrm{id})(\bar\partial_i(\xi_i))$ is also self-adjoint, 
it follows that the second-order conjugate variable

\[
\xi_i^{(2)} = \partial_i^\ast(1\otimes \xi_i)\in M
\]

is itself self-adjoint.

\item Finally,
since $\xi_i,\xi_i^{(2)}\in M$, the first and second-order conjugate variables of $X_i$ exist in $M$. By \cite[Lemma~40]{YGS}, the existence of first and second-order conjugate variables in $M$ implies that all second-order free difference quotients $(\partial_i\otimes \mathrm{id})\circ \partial_j$ are weak-$*$ closable on $M$ and $L^2$-closable on $L^2(M,\tau)$, both in the diagonal case $i=j$ and in the mixed case $i\neq j$.
\end{enumerate}
\end{proof}
\qed
\begin{remark}
One might wonder why we do not simply apply $\bar\partial_j$ to the identity
$\sum_i [\xi_i,X_i]=0$ and expand using the Leibniz rule.  The issue is that,
at this stage, we only know that $\xi_i\in L^2(M)\cap\Dom(\bar\partial_j)$, and
our goal is precisely to prove that $\xi_i\in M$.  Thus we cannot assume that
the commutators $[X_i,\xi_i]$ lie in $\Dom(\bar\partial_j)\cap M$ which is a $*$- algebra, on which $\bar\partial_j$ act as a derivation, see e.g. Dabrowski~\cite[Proposition~6]{Dab10}. Since we do not yet know that $\xi_i$ is bounded, this criterion does not apply directly.
\end{remark}

\begin{flushleft}
In Riemannian geometry, curvature manifests itself through commutation identities between the
Laplacian and the gradient.  A fundamental example is the following relation (stated for simplify in $\mathbb R^n$),

\[
[\nabla, \mathcal{L}_V] = -\,\nabla^2 V,
\]

where $\mathcal{L}_V$ is the Langevin generator associated with a potential $V$ (typically convex, so
$\nabla^2 V\ge 0$). Note also that it is readily extended to   This identity, discussed for instance in Bakry--Gentil--Ledoux
\cite[p.~104]{Ledoux}, underlies the decomposition of $\Gamma$--operators and plays a central role
in the Bakry--Émery curvature--dimension theory.  In particular, the Hessian $\nabla^2 V$ acts as a
curvature term, directly analogous to the Ricci tensor in the unweighted case.  The explicit form of
$\mathcal{L}_V$ is recalled in~\eqref{langevin}, where it appears as the generator of the Langevin diffusion
with invariant log-concave measure $\mu \propto e^{-V}dx$.

\medskip

In the free setting, an analogous phenomenon occurs: curvature is encoded in the failure of exact
commutation between the free difference quotients and the associated free Laplacian.  We now recall
a key lemma establishing this almost-commutation relation.  It should be viewed as the free
counterpart of the classical identity above, and it plays an equally fundamental role in the
structure of free $\Gamma$--calculus and in the analysis of free diffusion generators.

\end{flushleft}

\begin{lemma}\label{in2deltaDelta}(Dabrowski \cite[Lemma~19]{Dab14}). Under the assumptions in Definition~\ref{def6}, we have,
\begin{enumerate}
\item
Set $\Delta_j:=\partial_j^*\bar\partial_j$. For any $x\in \Dom(\partial)$ we have 
$x\in \Dom(\Delta_j)$, 
$\partial_i(x)\in \Dom(\Delta_j\otimes1+1\otimes \Delta_j)$, $x\in \Dom(\Delta^{3/2})$ and~:
 \begin{align*} 
&{\partial}_{i}\Delta_{j}(x)= (1 \otimes \Delta_j+\Delta_{j}\otimes 1){\partial}_i(x)
+\partial_{j}(x)\#(\bar{\partial}_{i}\partial_{j}^{*}(1\otimes 1) ).
\end{align*}
\item
 If $x\in \Dom(\bar{\partial})$ (resp. $x\in \Dom(\Delta)$)  then so is $\mathrm{id}\otimes\tau({\bar{\partial}_i}(x))$. 
\item
$\Dom(\Delta^{3/2})\subset \Dom(\overline{\Delta \otimes\mathrm{id}+\mathrm{id}\otimes \Delta }\circ\bar{\partial} )$ and moreover we have for any $x\in \Dom(\Delta^{3/2})$ \begin{align*} 
&\bar{\partial}_{i}\Delta(x)= \Delta^{\otimes}\bar{\partial}_i(x)
+\sum_{j=1}^N\bar{\partial}_{j}(x)\#(\bar{\partial}_{i}\partial_{j}^{*}(1\otimes 1) ).
\end{align*}
\end{enumerate}
We also denote in an shorthand $\Delta^{\otimes}:=\overline{\Delta\otimes \mathrm{id}+\mathrm{id}\otimes \Delta}$ which
 is thus a densely defined closed self-adjoint positive operator: it is the closure of $\Delta\otimes \mathrm{id}+\mathrm{id}\otimes \Delta$ defined on the core $\Dom(\Delta)\otimes \Dom(\Delta)$  (algebraic tensor product) and is also stable by $\varphi_t\otimes \varphi_t$.
\end{lemma}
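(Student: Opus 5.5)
The plan is to first prove the identity in (1) on the core $M_0=\A$ by direct computation, then extend it by closure arguments, which also yields (2) and (3).

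\textbf{Step 1: monomials.} For a monomial $x=m$, use Voiculescu's formula $\Delta_j(m)=\partial_j^\ast\partial_j m$. With $\partial_j^\ast(a\otimes b)=a\xi_j b-(\mathrm{id}\otimes\tau)(\partial_j a)\,b-a\,(\tau\otimes\mathrm{id})(\partial_j b)$ on polynomial tensors, this gives
\[
\Delta_j(m)=\sum_{m=aX_jb}\big(a\xi_jb - (\mathrm{id}\otimes\tau)(\partial_ja)\,b - a\,(\tau\otimes\mathrm{id})(\partial_jb)\big).
\]
This is well defined because $\xi_j\in M$ by Proposition~\ref{prop5}. We then apply $\partial_i$ term by term. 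The terms in which $\partial_i$ hits $a$ or $b$ regroup, by coassociativity of $\partial$ (that is, $(\partial_i\otimes\mathrm{id})\partial_j=(\mathrm{id}\otimes\partial_j)\partial_i$ up to the obvious placement), into $(\Delta_j\otimes1+1\otimes\Delta_j)\partial_i(m)$. The only leftover terms are those in which $\partial_i$ hits $\xi_j$. These produce $\sum a\,\bar\partial_i(\xi_j)\,b=\partial_j(m)\#\bar\partial_i\xi_j$. Here we use that $\xi_j\in\Dom(\bar\partial_i)$, and we approximate $\xi_j$ by polynomials as in the proof of Proposition~\ref{prop5}, so that the Leibniz rule holds for $a\xi_jb$ with $a,b$ polynomials. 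This is the classical identity $[\nabla,\mathcal L_V]=-\nabla^2V$ in free form.

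\textbf{Step 2: extension to $\Dom(\partial)$.} Both sides are continuous in a suitable graph norm. The key point is that $\bar\partial_i\xi_j\in M\bar\otimes M^{op}$, so $\eta\mapsto\eta\#\bar\partial_i\xi_j$ is bounded on $L^2(M)\bar\otimes L^2(M^{op})$. For $x\in\Dom(\bar\partial)$, take polynomials $x_k\to x$ in graph norm; one can regularize with the resolvent $\eta_\alpha$ of Proposition~\ref{resolvent}. Then test the identity against $\Dom(\Delta_j\otimes1+1\otimes\Delta_j)$ in weak form. Using self-adjointness of $\Delta_j^{\otimes}$ and $\Delta$, the identity transfers to its adjoint form. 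This shows that $\partial_i(x)\in\Dom(\Delta_j\otimes1+1\otimes\Delta_j)$ exactly when $\partial_i\Delta_j x$ makes sense, and it gives $x\in\Dom(\Delta^{3/2})$ by summing over $i$ and using $\|\bar\partial\Delta x\|^2=\langle\Delta^3x,x\rangle$.

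\textbf{Step 3: parts (2) and (3).} For (2), the map $(\mathrm{id}\otimes\tau)\circ\partial_i$ on polynomials equals $\partial_i^\ast$ applied to simple tensors, and it intertwines: $\partial_k(\mathrm{id}\otimes\tau)\partial_i=(\mathrm{id}\otimes\mathrm{id}\otimes\tau)(\partial_k\otimes\mathrm{id})\partial_i$. Closability of the second-order quotients, from Proposition~\ref{prop5}(3), together with the bounded second conjugate variables, gives the domain statements. For (3), sum the identity of (1) over $j$ and close the operator $\Delta^{\otimes}$ on the core $\Dom(\Delta)\otimes\Dom(\Delta)$, using stability under $\varphi_t\otimes\varphi_t$.

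The main obstacle is the domain bookkeeping in Step 2. Specifically, one must justify $\partial_i(x)\in\Dom(\Delta^{\otimes})$ and pass limits through the unbounded $\Delta^{\otimes}$. I would first try the weak formulation combined with resolvent smoothing, which commutes with all the bounded correction terms.
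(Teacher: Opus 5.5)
Your Step~1 is correct, and it is essentially all of part~(1). Here $\Dom(\partial)$ is the domain $\A$ of the unclosed $\partial_i$. It cannot mean $\Dom(\bar\partial)$: for a semicircular system $\Delta$ is the number operator and $\Dom(\bar\partial)=\Dom(\Delta^{1/2})\supsetneq\Dom(\Delta)$, so the conclusions of (1) fail there, and your Step~2 is aimed at a false statement. For $x\in\A$, membership in $\Dom(\Delta^{3/2})$ comes straight out of Step~1. Indeed $\Delta x$ is a combination of terms $a\xi_j b$ and polynomials, all lying in $\Dom(\bar\partial)=\Dom(\Delta^{1/2})$; the identity $\|\bar\partial\Delta x\|^2=\langle\Delta^3x,x\rangle$ you invoke presupposes this. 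The real content of the lemma is therefore (2) and (3). The paper does not prove these; it imports them from Dabrowski, so there is no in-paper proof to compare against.

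On (3) your proposal has a genuine gap. Closing the polynomial identity requires, for each $x\in\Dom(\Delta^{3/2})$, polynomials $x_k\to x$ with $\partial\Delta x_k\to\bar\partial\Delta x$, i.e.\ that $\A$ be a core for $\Delta^{3/2}$. Nothing in the hypotheses gives this: $\A$ is a core for $\Delta^{1/2}$ only by construction, it is not $\varphi_t$-invariant in general, and knowing a core for $\Delta^{\otimes}$ produces no such approximants of $x$. Resolvent smoothing does not help in the way you describe. The map $\eta_\alpha$ does not commute with $\bar\partial$, nor $\eta_\alpha^{\otimes}$ with $\mathcal R_{\mathscr J\xi}$; that defect is the curvature term itself, and the resolvent formula in the proof of Lemma~\ref{lma5} is derived from (3), not conversely.

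A workable route is duality. Approximate $x$ by $x_k\in\A$ only in the graph norm of $\bar\partial$, pair (1) with $V$ from an operator core of $\Delta^{\otimes}$, and write $\langle\bar\partial_i\Delta x_k,V\rangle=\langle\bar\partial x_k,\bar\partial\,\partial_i^\ast V\rangle\to\langle\bar\partial_i\Delta x,V\rangle$. This gives $\langle\bar\partial_i x,\Delta^{\otimes}V\rangle=\langle\bar\partial_i\Delta x-\sum_j\bar\partial_j x\#\bar\partial_i\xi_j,\,V\rangle$ on the core, and self-adjointness of $\Delta^{\otimes}$ then yields (3). The argument needs $V\in\Dom(\partial_i^\ast)$ and $\partial_i^\ast V\in\Dom(\bar\partial)$ on a genuine core. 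Polynomial tensors have this property but are not known to form a core. Tensors $a\otimes b$ with $a,b\in\Dom(\Delta)\cap M$ do span a core, since they contain $\eta_\alpha(M)\odot\eta_\alpha(M)$. However, $\partial_i^\ast(a\otimes b)$ then contains the slices $(\mathrm{id}\otimes\tau)(\bar\partial_i a)\,b$ and $a\,(\tau\otimes\mathrm{id})(\bar\partial_i b)$, and controlling them is where part~(2) is needed.

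Your sketch of (2) also misses its key input. Closability of the second-order quotients gives uniqueness of limits, not their existence. By coassociativity, $\partial_k(\mathrm{id}\otimes\tau)\partial_i y=(\mathrm{id}\otimes[(\mathrm{id}\otimes\tau)\partial_i])\partial_k y$ on $\A$. So what you need is the $L^2$-boundedness of $(\mathrm{id}\otimes\tau)\circ\bar\partial_i$, which is the estimate the paper attributes to Dabrowski just before Lemma~\ref{lem:kernel-scalar-L2}. The \emph{resp.}~$\Dom(\Delta)$ half of (2) needs a further commutation argument that you do not address.
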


\begin{flushleft}
Under this definition, we will show that several unexpected and remarkably robust properties continue to hold. In particular, we obtain a refined Poincar\'e inequality whose optimal constant is bounded by the operator norm of the non-commutative Jacobian matrix of the free difference quotient. This may be viewed as a free \emph{H\"ormander-Brascamp--Lieb} type inequality~\cite{Brascamp}, but now in a far more general setting, with no structural assumptions on the underlying state (in particular, no Gibbsian form), beyond the Lipschitz regularity of the conjugate variables.
\end{flushleft}
\begin{flushleft}
Before going further, we need to introduce the right--leg (``sharp'') action at the matrix level which will be crucial and unavoidable from the almost--commutation Lemma~\ref{in2deltaDelta}.
\end{flushleft}

\begin{definition}\label{def:right-action-RS-explicit}
Let $(M,\tau)$ be a tracial $W^\ast$–probability space. On the algebraic tensor product
$L^2(M)\otimes L^2(M^{op})$ we define, for $x\otimes y\in M\otimes M^{op}$ and
$a\otimes b\in L^2(M)\otimes L^2(M^{op})$,

\[
(a\otimes b)\sharp(x\otimes y) := a x\otimes y b.
\]

This extends by bilinearity and continuity to a bounded right action of the von Neumann tensor
product $M\bar\otimes M^{op}$ on $L^2(M){\bar\otimes}\,L^2(M^{op})$.

For $n\in\mathbb N$ and $T := (T)_{ij}=(T_{i,j})_{i,j=1}^n\in M_n(M\bar\otimes M^{op})$, we define the following linear operator called the
\emph{right-leg} multiplication (action)

\[
\mathcal R_T:\ \big(L^2(M){\bar\otimes}\,L^2(M^{op})\big)^{\oplus_n}\ \longrightarrow\ 
\big(L^2(M){\bar\otimes}\,L^2(M^{op})\big)^{\oplus_n}
\]

by
\begin{equation}
(\mathcal R_T\eta)_i := \sum_{j=1}^n \eta_j\sharp T_{ji}
\end{equation}
for $\eta = (\eta_1,\dots,\eta_n)\in \big(L^2(M){\bar\otimes}\,L^2(M^{op})\big)^{\oplus_n}$ and
$i=1,\dots,n$. 
\bigbreak
\end{definition}
\begin{flushleft}
    We can now check the following representation properties of $\mathcal{R}$ are satisfied. 
\end{flushleft}

\begin{proposition}\label{prop:S-T-rep-positivity-explicit}
For each $n\ge1$ and each $T\in M_n(M\bar\otimes M^{op})$, the operator
$\mathcal R$ defined in Definition~\ref{def:right-action-RS-explicit} satisfies:
\begin{enumerate}
  \item \emph{Linear}: $\mathcal{R}_{T+S}=\mathcal R_{T} +\mathcal R_S$
  \item \emph{Boundedness:} $
  \|\mathcal R_T\|\le \|T\|_{M_n(M\bar\otimes M^{op})}.$
  \newline
  Hence $\mathcal{R}_T\in B(\big(L^2(M){\bar\otimes}\,L^2(M^{op})\big)^{\oplus_n})\simeq M_n(B(L^2(M){\bar\otimes}\,L^2(M^{op}))$

  \item \emph{$^\ast$–structure:} $\mathcal R_{T^\ast}=(\mathcal R_T)^\ast$ where by definition $T^*$ is defined by $(T^*)_{ij}=T_{ji}^*$ for the usual $^*$-involution on $M\bar\otimes M^{op}$.

\item $\mathcal R_{(1\otimes 1)\otimes I_n}=\mathrm{id}_{\big(L^2(M){\bar\otimes}\,L^2(M^{op})\big)^{\oplus_n})}$.
  \item \emph{Multiplication rule:}
  for the usual matrix product (denoted also here as either without subscript or $\sharp$) in $M_n(M\bar\otimes M^{op})$ (i.e. usual matrix product and multiplication $\sharp$ in $M\bar\otimes M^{op}$),

\[
  \mathcal R_S\circ\mathcal R_T=\mathcal R_{T S}=\mathcal R_{T\sharp S}
  \qquad\text{for all }T,S\in M_n(M\bar\otimes M^{op}).
  \]
In particular if $T$ is invertible in $M_n(M\bar\otimes M^{op})$, then $\mathcal R_T$ is invertible and $(\mathcal R_T)^{-1}=\mathcal R_{T^{-1}}$.

    \item \emph{Positivity:}
  if $T\ge 0$ in $M_n(M\bar\otimes M^{op})$ (resp.\ $T>0$, i.e.\ $T\ge c\,((1\otimes 1)\otimes I_n)$ for some $c>0$), then 
  $\mathcal R_T\ge 0$ (resp.\ $\mathcal R_T\ge c\,\mathrm{id}$) as an operator on 
  $\big(L^2(M)\bar\otimes L^2(M^{op})\big)^{\oplus_n}$.
\end{enumerate}
Hence, $\mathcal{R}$ is a anti-representation or a representation of the opposite algebra $M_n(M\bar\otimes M^{op})^{op}$.
\end{proposition}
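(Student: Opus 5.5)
The plan is to realize $\mathcal R$ concretely through a normal representation of $(M\bar\otimes M^{op})^{op}$ on $H:=L^2(M)\bar\otimes L^2(M^{op})$, and then to amplify it to matrices. Set $N:=M\bar\otimes M^{op}$, acting on its standard space $L^2(N)\simeq H$. For $x\otimes y\in N$, the map $a\otimes b\mapsto ax\otimes yb$ is right multiplication by $x\otimes y$ in $N$: the first leg is multiplied on the right in $M$, and the second leg $yb$ is, in $M^{op}$, the product $b\cdot_{op} y$, again a right multiplication. Hence $\eta\sharp t=J_N t^\ast J_N\,\eta$ with $J_N=J\otimes J$, i.e. $\sharp$ is the commutant right action $\rho(t):=J_N t^\ast J_N\in N'$. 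This identification gives at once that $\sharp$ extends from the algebraic tensor product to a normal, contractive, $\ast$-preserving anti-homomorphism $\rho:N\to B(H)$ with $\rho(1)=\mathrm{id}$. The one delicate point is that the extension from elementary tensors is well defined and continuous; that follows by checking that the formula agrees with $J_N(\,\cdot\,)^\ast J_N$ on the dense subalgebra $M\odot M^{op}$ and then invoking normality.

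Next I would pass to matrices. The formula $(\mathcal R_T\eta)_i=\sum_j\rho(T_{ji})\eta_j$ says that $\mathcal R_T$ is the operator matrix $[\rho(T_{ji})]_{i,j}$. Equivalently, $\mathcal R_T=\rho_n(T)$, where $\rho_n:=(\rho\otimes\mathrm{id})\circ{}^{t}$ and ${}^t$ denotes the transpose. On $M_n(N)$ realized in $B(H^{\oplus n})$, one has $\rho_n(T)=J_n T^\ast J_n$ with $J_n:=J_N\otimes I_n$ acting entrywise; indeed $(J_nT^\ast J_n)_{ij}=J_N T_{ji}^\ast J_N=\rho(T_{ji})$. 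All six properties then follow from this single identity $\mathcal R_T=J_nT^\ast J_n$, where $J_n$ is an antiunitary involution.

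Each property now reduces to a short check.
\begin{enumerate}
\item Linearity holds because $T\mapsto J_nT^\ast J_n$ is linear: $J_n$ is conjugate-linear and $\ast$ is conjugate-linear, so the two conjugations cancel.
\item Boundedness holds because $\|J_nT^\ast J_n\|=\|T^\ast\|=\|T\|$, where the norm of $M_n(N)$ is the one inherited from $B(H^{\oplus n})$.
\item For the $\ast$-structure, $(J_nT^\ast J_n)^\ast=J_nTJ_n=\mathcal R_{T^\ast}$, using $J_n^\ast=J_n$ in the antilinear sense.
\item For the unit, $J_nJ_n=\mathrm{id}$ gives $\mathcal R_{(1\otimes1)\otimes I_n}=\mathrm{id}$.
\item For multiplicativity, $\mathcal R_S\mathcal R_T=J_nS^\ast J_nJ_nT^\ast J_n=J_n(TS)^\ast J_n=\mathcal R_{TS}$. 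Invertibility follows by applying this to $TT^{-1}=T^{-1}T=1$ together with the unit property.
\item For positivity, if $T\ge0$ then $T=S^\ast S$, so $\mathcal R_T=\mathcal R_{S}\mathcal R_{S^\ast}=\mathcal R_{S^\ast}^{\,\ast}\mathcal R_{S^\ast}\ge0$. If $T\ge c\cdot 1$, apply this to $T-c\cdot 1$ and use linearity and the unit property.
\end{enumerate}

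The main obstacle is the identification in the first step: showing that $\sharp$, which is defined only on elementary tensors, coincides with the commutant action $J_N(\cdot)^\ast J_N$. This includes handling the opposite-algebra conventions on the second leg without sign or ordering errors, and confirming that the transpose in $\mathcal R_T$ is exactly what turns entrywise conjugation into the anti-representation law $\mathcal R_S\mathcal R_T=\mathcal R_{TS}$. Everything after that is formal.
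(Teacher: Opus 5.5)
Your proof is correct, but it is organized differently from the paper's. The paper works entrywise with $\sharp$. It takes boundedness as immediate (citing Dabrowski). It proves the $^\ast$-structure by summing the adjoint rule $\langle \eta_1\sharp u,\eta_2\rangle=\langle\eta_1,\eta_2\sharp u^\ast\rangle$ over matrix indices, a rule it calls trivial and does not prove. It gets the multiplication rule from associativity $(\eta\sharp a)\sharp b=\eta\sharp(ab)$. Positivity then follows from a factorization $T=Q^\ast Q$ (or $T=c+Q^\ast Q$), exactly as in your item 6.

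You instead put all the analytic content into one identification: $\eta\sharp t=J_N t^\ast J_N\eta$, i.e. right multiplication in the standard form of $N=M\bar\otimes M^{op}$. This gives $\mathcal R_T=J_nT^\ast J_n$, and each item becomes a one-line manipulation. Your elementary-tensor check is right, including the opposite-algebra convention $yb=b\cdot_{op}y$ on the second leg and the transpose that produces the anti-representation law.

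Your route buys several things.
\begin{enumerate}
\item It justifies extending $\sharp$ from $M\odot M^{op}$ to all of $N$ via $\sigma$-weak continuity of $t\mapsto J_Nt^\ast J_N$. The paper only says ``by bilinearity and continuity'', and norm continuity alone does not suffice, since $M\odot M^{op}$ is in general only $\sigma$-weakly dense in $N$.
\item It proves the adjoint rule and the associativity of $\sharp$ that the paper uses without proof.
\item It gives the sharper isometric statement $\|\mathcal R_T\|=\|T\|$, because the standard representation of $M_n(N)$ on $L^2(N)^{\oplus n}$ is faithful.
\item It explains the closing sentence conceptually: $\mathcal R$ is the commutant anti-representation composed with the transpose.
\end{enumerate}

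The paper's route is more elementary and avoids the standard-form and Tomita machinery, at the cost of leaving those basic facts about $\sharp$ unverified.
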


\begin{proof}
\begin{enumerate}

\item \emph{Boundedness.}
This is immediate (and already used by Dabrowski~\cite[Proof of Theorem~17]{Dab14} and \cite{Dab14v2}).

\item \emph{$^\ast$–structure.}
Let $\eta,\zeta\in\big(L^2(M){\bar\otimes}L^2(M^{op})\big)^{\oplus_n}$. Then
\begin{eqnarray}
\big\langle \mathcal R_T\eta, \zeta\big\rangle_{\big(L^2(M){\bar\otimes}\,L^2(M^{op})\big)^{\oplus_n}}
&=& \sum_{i=1}^n \big\langle (\mathcal R_T\eta)_i,\zeta_i\big\rangle_{L^2(M){\bar\otimes}\,L^2(M^{op})} \nonumber\\
&=& \sum_{i=1}^n \big\langle \sum_{j=1}^n \eta_j\sharp T_{ji}, \zeta_i\big\rangle_{L^2(M){\bar\otimes}\,L^2(M^{op})} \nonumber\\
&=& \sum_{i,j=1}^n \big\langle \eta_j\sharp T_{ji}, \zeta_i\big\rangle_{L^2(M){\bar\otimes}\,L^2(M^{op})} \nonumber\\
&=& \sum_{i,j=1}^n \big\langle \eta_j, \zeta_i\sharp T_{ji}^\ast\big\rangle_{L^2(M){\bar\otimes}\,L^2(M^{op})},
\end{eqnarray}
using the following trivial property: for all $\eta_1,\eta_2\in L^2(M){\bar\otimes}\,L^2(M^{op})$ and all $u\in M\bar\otimes M^{op}$,
one has $
\big\langle \eta_1\sharp u, \eta_2\big\rangle_{L^2(M){\bar\otimes}\,L^2(M^{op})}
=
\big\langle \eta_1, \eta_2\sharp u^\ast\big\rangle_{L^2(M){\bar\otimes}\,L^2(M^{op})},
$
where again $(a\otimes b)^\ast:=a^\ast\otimes b^\ast$ on simple tensors.
\newline
But for each $j=1,\ldots,n$,

\[
(\mathcal R_{T^\ast}\zeta)_j
= \sum_{i=1}^n \zeta_i\sharp (T^\ast)_{ij}
= \sum_{i=1}^n \zeta_i\sharp T_{ji}^\ast,
\]

so, that

\[
\big\langle \mathcal R_T\eta,\zeta\big\rangle_{\big(L^2(M){\bar\otimes}\,L^2(M^{op})\big)^{\oplus_n}}
= \sum_{j=1}^n \big\langle \eta_j, (\mathcal R_{T^\ast}\zeta)_j\big\rangle_{L^2(M){\bar\otimes}\,L^2(M^{op})}
= \big\langle \eta, \mathcal R_{T^\ast}\zeta\big\rangle_{\big(L^2(M){\bar\otimes}\,L^2(M^{op})\big)^{\oplus_n}}.
\]

Thus $\mathcal R_{T^\ast}=(\mathcal R_T)^\ast$.

\item \emph{Multiplicative structure.}
Let $T,S\in M_n(M\bar\otimes M^{op})$ and 
$\eta\in\big(L^2(M){\bar\otimes}L^2(M^{op})\big)^{\oplus_n}$.
Then

\[
(\mathcal R_T\eta)_j = \sum_{k=1}^n \eta_k\sharp T_{kj}.
\]

Hence, using associativity of $\sharp$,
\begin{align*}
(\mathcal R_S\mathcal R_T\eta)_i
&= \sum_{j=1}^n (\mathcal R_T\eta)_j\sharp S_{ji} \\
&= \sum_{j=1}^n \left(\sum_{k=1}^n \eta_k\sharp T_{kj}\right)\sharp S_{ji} \\
&= \sum_{k=1}^n \eta_k\sharp\Big(\sum_{j=1}^n T_{kj}\sharp S_{ji}\Big)=\sum_{k=1}^n \eta_k\sharp\Big(\sum_{j=1}^n T_{kj} S_{ji}\Big)
\end{align*}
The usual matrix product is

\[
(T\sharp S)_{ki} := \sum_{j=1}^n T_{kj} S_{ji}=\sum_{j=1}^n T_{kj}\sharp S_{ji},
\]
(recall that $\sharp$ is nothing but the canonical multiplication in $M\bar\otimes M^{op}$.)

so

\[
(\mathcal R_S\mathcal R_T\eta)_i
= \sum_{k=1}^n \eta_k\sharp (TS)_{ki}
= (\mathcal R_{TS}\,\eta)_i.
\]

Thus $\mathcal R_S\circ\mathcal R_T = \mathcal R_{TS}$.

\begin{flushleft}
    We can then easily see the inverse property by checking that $\mathcal R_{T^{-1}}$ is both a left and right inverse of $\mathcal{R}_T$.
\end{flushleft}
\item \emph{Positivity.}
Assume $T\ge0$ in $M_n(M\bar\otimes M^{op})$. Then $T=Q^\ast Q$ for some 
$Q\in M_n(M\bar\otimes M^{op})$. By the representation property and the 
$^\ast$–structure,

\[
\mathcal R_T = \mathcal R_{Q^\ast Q} = \mathcal R_Q\,\mathcal R_{Q^\ast}
= \mathcal R_Q\,(\mathcal R_Q)^\ast.
\]

Therefore, for any 
$\eta\in\big(L^2(M){\bar\otimes}\,L^2(M^{op})\big)^{\oplus_n}$,
\begin{align*}
\big\langle \mathcal R_T\eta,\eta\big\rangle_{\big(L^2(M){\bar\otimes}\,L^2(M^{op})\big)^{\oplus_n}}
&= \big\langle \mathcal R_Q(\mathcal R_Q)^\ast\eta,\eta\big\rangle_{\big(L^2(M){\bar\otimes}\,L^2(M^{op})\big)^{\oplus_n}} \\
&= \big\langle (\mathcal R_Q)^\ast\eta,(\mathcal R_Q)^\ast\eta\big\rangle_{\big(L^2(M){\bar\otimes}\,L^2(M^{op})\big)^{\oplus_n}} \\
&\ge 0.
\end{align*}
which yields the positivity statement.
\bigbreak 
If $T\geq c.(1\otimes 1)\otimes I_n$, in particular $T= c.(1\otimes 1)\otimes I_n+Q^*Q$ for some $Q\in M_n(M\bar\otimes M^{op})$, so that $\mathcal R_T=c\:\mathrm{id}+\mathcal{R}_Q(\mathcal{R}_Q)^*$, and the conclusion is immediate.
\end{enumerate}
\end{proof}
\qed

\begin{flushleft}
In particular, this formalism is necessary as the almost--commutation relation on $\Dom(\Delta^{3/2})$:

\[
\Dom(\Delta^{3/2})\subset \Dom\!\left(\overline{\Delta \otimes\mathrm{id}+\mathrm{id}\otimes \Delta }\circ\bar{\partial}\right),
\]

and for any $x\in \Dom(\Delta^{3/2})$,
\begin{align*}
\bar{\partial}_{i}\Delta(x)
&= \Delta^{\otimes}\bar{\partial}_i(x)
+\sum_{j=1}^N\bar{\partial}_{j}(x)\,\sharp\big(\bar{\partial}_{i}\partial_{j}^{*}(1\otimes 1)\big).
\end{align*}
Here the correction matrix $\big(\bar\partial_j(\partial_{i}^{*}(1\otimes 1))\big)_{ij}$ acts via $\mathcal{R}_{(\cdot)}$ through the right multiplication above done in a reversed order (note also the reversed index convention we adopt for the non-commutative Jacobian). Since we prove in Proposition~\ref{prop:S-T-rep-positivity-explicit} that any $C^\ast$--positivity condition on this matrix (such as the curvature--dimension condition) transfers directly to an operator inequality for $\mathcal{R}_T$, we are now in a good position.
\end{flushleft}
\begin{flushleft}
    Let us also remark now that under Lipschitz conjugate variables, the non-commutative Jacobian of such conjugates variables enjoys nice symmetry properties which will make it self-adjoint and fixed for the involution on $M\bar\otimes M^{op}$ corresponding to adjointness of Hilbert-Schmidt operators as recalled in Remark~\ref{hs}.
\end{flushleft}
\begin{lemma}[Symmetries of the Jacobian]\label{second}
Suppose $M=(W^\ast(X_1,\ldots,X_n),\tau)$ satisfies the \emph{Lipschitz conjugate variable condition}. Then:
\begin{enumerate}
    \item \emph{Schwarz integrability} (Dabrowski \cite[Lemma~36]{Dab14}):

\[
      \bar{\partial}_j\xi_i=\sigma(\bar{\partial}_i \xi_j),
    \]

    where the flip automorphism is given on simple tensors by $\sigma(a\otimes b)=b\otimes a$, $a\in M$, $b\in M^{op}$, and extends to an isometric involution on $L^2(M){\bar\otimes}\,L^2(M^{op})$.
    
    \item \emph{Self--adjointness of the Jacobian:} setting

\[
      \mathscr{J}\xi=(\bar{\partial}_j\xi_i)_{ij}\in M_n(M\bar{\otimes}M^{op}),
    \]

    one has for all $i,j$,

\[
      (\mathscr{J}\xi)_{ij}^\ast=(\mathscr{J}\xi)_{ji},
    \]

    where $(a\otimes b)^\ast=a^\ast\otimes b^\ast$ on simple tensors.
    
    \item \emph{HS fixed--point:}

\[
      ((\mathscr{J}\xi)_{i,j})^{\dagger}=(\mathscr{J}\xi)_{i,j}.
    \]

    \item If $\mathscr{J}\xi$ is invertible then $(\mathscr{J}\xi)^{-1}$ is self-adjoint.
\end{enumerate}
\end{lemma}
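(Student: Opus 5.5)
We take item (1) from Dabrowski's Lemma~36 and derive (2)--(4) from it, using two involutions on $M\bar\otimes M^{op}$: the HS involution $\dagger$, with $(a\otimes b)^\dagger=b^\ast\otimes a^\ast$, and the flip $\sigma$. On simple tensors the coordinatewise star is their composition, $(a\otimes b)^\ast=a^\ast\otimes b^\ast=\sigma\big((a\otimes b)^\dagger\big)$. By antilinearity and continuity this extends to all of $L^2(M)\bar\otimes L^2(M^{op})$, and in particular to $M\bar\otimes M^{op}$.

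\textbf{Item (3).} By Proposition~\ref{self} (or Proposition~\ref{prop5}), $\xi_i=\xi_i^\ast$. The remark on real derivations gives $\bar\partial_j(x^\ast)=(\bar\partial_j x)^\dagger$ for $x\in\Dom(\bar\partial)$. Applying this to $x=\xi_i$ yields $(\bar\partial_j\xi_i)^\dagger=\bar\partial_j\xi_i$, which is the HS fixed-point property.

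\textbf{Item (2).} We compute
\[
(\mathscr J\xi)_{ij}^\ast=\sigma\big((\bar\partial_j\xi_i)^\dagger\big)=\sigma(\bar\partial_j\xi_i)=\bar\partial_i\xi_j=(\mathscr J\xi)_{ji}.
\]
The second equality uses (3). The third uses (1) together with $\sigma^2=\mathrm{id}$: (1) gives $\bar\partial_j\xi_i=\sigma(\bar\partial_i\xi_j)$, so $\sigma(\bar\partial_j\xi_i)=\bar\partial_i\xi_j$. Hence $(\mathscr J\xi)^\ast=\mathscr J\xi$ in $M_n(M\bar\otimes M^{op})$, with the matrix adjoint $(T^\ast)_{ij}=T_{ji}^\ast$. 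One point needs checking: the identity $\ast=\sigma\circ\dagger$, established on simple tensors, must transfer to the Lipschitz elements $\bar\partial_j\xi_i\in M\bar\otimes M^{op}$. This holds because both sides are continuous antilinear isometries of the $L^2$ space.

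\textbf{Item (4).} If $T=\mathscr J\xi$ is invertible in the $C^\ast$-algebra $M_n(M\bar\otimes M^{op})$, then $(T^{-1})^\ast=(T^\ast)^{-1}=T^{-1}$. One could also check that $(T^{-1})_{ij}$ inherits the entrywise $\dagger$-symmetry. However, $\dagger$ applied entrywise reverses products, since it is an anti-automorphism, so this would require transposing the matrix as well; the statement only asks for self-adjointness, so we do not pursue it.

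\textbf{Main obstacle.} The only genuinely nontrivial input is (1). If we had to reprove it, we would start from the symmetry of the second-order derivatives $(\partial_i\otimes 1)\partial_j$ on polynomials, pair against test tensors, and use the adjoint formulas for $\partial_j^\ast$ on $M\otimes M^{op}$. This is where the closability in Proposition~\ref{prop5}(3) enters. Here we simply invoke Dabrowski~\cite[Lemma~36]{Dab14}.
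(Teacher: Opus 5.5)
Your proof is correct and follows the paper's own argument. In both, item (1) is quoted from Dabrowski, item (3) follows from the real-derivation property applied to the self-adjoint $\xi_i$, item (2) follows from writing $\ast=\sigma\circ\dagger$ and combining (1) with (3), and item (4) follows from $(T^{-1})^\ast=(T^\ast)^{-1}$ in the $C^\ast$-algebra $M_n(M\bar\otimes M^{op})$; your continuity remark extending $\ast=\sigma\circ\dagger$ beyond simple tensors is a detail the paper leaves implicit.

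One small correction to your aside in (4): for the product of $M\bar\otimes M^{op}$ (that is, $\sharp$), $\dagger$ is multiplicative, not an anti-automorphism. Indeed, $\big((a\otimes b)\sharp(c\otimes d)\big)^\dagger=b^\ast d^\ast\otimes c^\ast a^\ast=(a\otimes b)^\dagger\sharp(c\otimes d)^\dagger$, so entrywise $\dagger$-symmetry passes to $(\mathscr J\xi)^{-1}$ with no transposition needed---though this is irrelevant to the statement.
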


\begin{proof}
\begin{enumerate}
    \item The first point is \cite[Lemma~36]{Dab14}.
    
    \item For the invariance under the Hilbert-Schmidt involution $^\dagger$: since $\xi_j \in \Dom(\bar\partial)$ for each $j=1,\ldots,n$, and the free difference quotients are real derivations, we have

\[
      \bar\partial_j(\xi_i^\ast)=(\bar\partial_j\xi_i)^{\dagger}.
    \]

    As the conjugate variables are self--adjoint (cf.\ Proposition~\ref{self}), $\xi_i^\ast=\xi_i$, hence

\[
      (\bar\partial_j\xi_i)^{\dagger}=\bar\partial_j\xi_i.
    \]

    \item For self--adjointness, combine Schwarz integrability with the previous point and that $\sigma$ in an involutive automorphism:

\[
      (\bar\partial_j\xi_i)^\ast=\sigma\big((\bar\partial_j\xi_i)^{\dagger}\big)
      =\sigma(\bar\partial_j\xi_i)=\bar\partial_i\xi_j,
    \]

    yielding $(\mathscr{J}\xi)_{i,j}^\ast=(\mathscr{J}\xi)_{j,i}$.
    \item The self-adjointness of the inverse follows now as a consequence of the self-adjoitness of $\mathscr{J}\xi$.
\end{enumerate}
This completes the proof.
\end{proof}

\qed
\begin{flushleft}
We can now state one of our main theorems which gives a precise control via an Hessian term in the free Poincar\'e inequality under a convexity and invertibility assumption and henceforth of the Poincar\'e constant under a curvature criterion: in this case the free Poincar\'e constant is less than or equal to the inverse of the convexity constant, in perfect analogy to the classical case.
\end{flushleft}
\begin{theorem}[Free Brascamp--Lieb--Poincar\'e inequality]\label{thm:BLP-Lipschitz}
Let $(M,\tau)$ be a tracial von Neumann algebra generated by a self-adjoint $n$-tuple
$X=(X_1,\ldots,X_n)$, and assume the tuple admits Lipschitz conjugate variables in the
sense of Definition~\ref{def6}.  Define the non-commutative Jacobian of the conjugate
variables by

\[
\mathscr J\xi := \big(\bar\partial_j \xi_i\big)_{i,j=1}^n \;\in\; M_n(M\,\bar\otimes\,M^{op}).
\]

Assume that $\mathscr J\xi$ is bounded, invertible and positive in the $C^*$-algebra
$M_n(M\bar\otimes M^{op})$.  In particular, since $\mathscr J\xi>0$ and invertible,
its spectrum is contained in a compact subset of $[c,\infty)$ for some $c>0$, and hence
\begin{equation}\label{eq:CD-Cstar}
\mathscr J\xi \;\ge\; c\,(1\otimes 1)\otimes I_n
\qquad\text{in } M_n(M\bar\otimes M^{op}),
\end{equation}
i.e.\ the curvature--dimension condition $CD(c,\infty)$ holds in the $C^*$-sense.

Then for every $Y\in \Dom(\bar\partial)=\bigcap_{i=1}^n \Dom(\bar\partial_i)$ one has

\[
\big\|Y-\tau(Y).1\big\|_{L^2(M)}^2
\;\le\;
\sum_{i=1}^n\sum_{j=1}^n
\Big\langle\bar\partial_j Y\sharp\big((\mathscr J\xi)^{-1}\big)_{ji},\,
\bar\partial_i Y\Big\rangle_{L^2(M)\,\bar\otimes\,L^2(M^{op})}.
\]

Moreover, under the curvature--dimension condition \eqref{eq:CD-Cstar}, functional
calculus gives $(\mathscr J\xi)^{-1}\le \frac{1}{c}\,(1\otimes 1)\otimes I_n$, and
therefore

\[
\big\|Y-\tau(Y). 1\big\|_{L^2(M)}^2
\le
\frac{1}{c}\sum_{i=1}^n
\big\|\bar\partial_i Y\big\|_{L^2(M)\,\bar\otimes\,L^2(M^{op})}^2.
\]

\end{theorem}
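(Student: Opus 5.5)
We follow the classical Brascamp--Lieb argument via the $L^2$ Bochner identity, using the almost-commutation relation of Lemma~\ref{in2deltaDelta} as the free substitute for $[\nabla,\mathcal L_V]=-\nabla^2V$. By density of $\mathrm{Ran}(\eta_\alpha)$ and closedness of $\bar\partial$, it suffices to treat centered $Y=\Delta u$ with $u$ regular enough. Concretely, take $Y$ centered in $\Dom(\bar\partial)$ and replace it by $\eta_\alpha\eta_\beta Y$ (or $\varphi_t Y$). This lies in $\Dom(\Delta^{3/2})$ and converges to $Y$ in the graph norm of $\bar\partial$. Both sides of the inequality are continuous in that norm, since $\mathcal R_{(\mathscr J\xi)^{-1}}$ is bounded by Proposition~\ref{prop:S-T-rep-positivity-explicit}. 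Since $\ker\Delta=\mathbb C1$ (the free Poincar\'e inequality gives a spectral gap), $\Delta$ is invertible on $L^2_0(M)$. We may therefore write $Y=\Delta u$ with $u=\Delta^{-1}Y$, and the regularization guarantees $u\in\Dom(\Delta^{3/2})$.

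The key identity comes from the integrated version of Lemma~\ref{in2deltaDelta}(3). Write $\eta:=\bar\partial Y$ and $\theta:=\bar\partial u\in(L^2\bar\otimes L^2)^{\oplus n}$. Applying $\bar\partial_i$ to $Y=\Delta u$ gives
\[
\eta=\bar\partial\Delta u=\Delta^{\otimes}\theta+\mathcal R_{\mathscr J\xi}\theta ,
\]
where the index convention $(\mathcal R_T\eta)_i=\sum_j\eta_j\sharp T_{ji}$ matches $\sum_j\bar\partial_j u\sharp\bar\partial_i\xi_j$ after using the symmetry of Lemma~\ref{second}. Hence $\eta=(\Delta^{\otimes}+\mathcal R_{\mathscr J\xi})\theta$. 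Moreover,
\[
\|Y\|_2^2=\langle \Delta u,Y\rangle=\langle\theta,\eta\rangle .
\]
Now set $A:=\Delta^{\otimes}\otimes I_n\ge 0$ (self-adjoint, closed) and $B:=\mathcal R_{\mathscr J\xi}\ge c\,\mathrm{id}$ (bounded and invertible, by Proposition~\ref{prop:S-T-rep-positivity-explicit}). Then
\[
\langle\theta,\eta\rangle=\langle\theta,(A+B)\theta\rangle .
\]
An elementary Hilbert-space inequality gives $\langle\theta,(A+B)\theta\rangle\le\langle (A+B)\theta,B^{-1}(A+B)\theta\rangle$. To see this, expand
\[
\langle (A+B)\theta,B^{-1}(A+B)\theta\rangle=\langle\theta,(A+B)\theta\rangle+\langle A\theta,\theta\rangle+\langle A\theta,B^{-1}A\theta\rangle ,
\]
and note that the last two terms are nonnegative. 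Therefore
\[
\|Y\|_2^2\le\langle \eta,\mathcal R_{(\mathscr J\xi)^{-1}}\eta\rangle ,
\]
which is the stated inequality after unpacking $\mathcal R$ and using $\mathcal R_{T^{-1}}=\mathcal R_T^{-1}$. The second bound follows from $(\mathscr J\xi)^{-1}\le c^{-1}$ together with the positivity part of Proposition~\ref{prop:S-T-rep-positivity-explicit}.

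The main obstacle is domain bookkeeping. We need $\theta\in\Dom(A)$, so that $A\theta$ makes sense and the expansion is legitimate. We also need $\langle\theta,\Delta^{\otimes}\theta\rangle\ge0$, which holds because $\Delta^{\otimes}$ is a positive self-adjoint operator (Lemma~\ref{in2deltaDelta}). Finally, we must justify $\langle\Delta u,Y\rangle=\langle\bar\partial u,\bar\partial Y\rangle$ for $Y\in\Dom(\bar\partial)$. I would first establish all of this for $u=\eta_\alpha\eta_\beta v$ with $v$ itself in $\Dom(\Delta^{1/2})$, where Lemma~\ref{in2deltaDelta}(3) applies directly. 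I would then pass to the limit, using the boundedness of $\mathcal R$ and the fact that $\Delta$ commutes with the resolvents. Two further points need checking, and I would verify them carefully rather than assume them. First, the transpose convention in $\mathscr J\xi$ versus the formula in Lemma~\ref{in2deltaDelta}, which should be handled by the Schwarz symmetry $\bar\partial_j\xi_i=\sigma(\bar\partial_i\xi_j)$ combined with self-adjointness. Second, whether the required object is $\mathcal R_{\mathscr J\xi}$ itself or its adjoint, which matters only up to self-adjointness and so should be harmless.
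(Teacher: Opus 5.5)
Your proposal is correct and is essentially the paper's proof: solve $Y=\Delta u$ on $L^2_0(M)$, use the almost-commutation relation to write $\bar\partial Y=(\Delta^{\otimes}\otimes I_n+\mathcal R_{\mathscr J\xi})\bar\partial u$, and compare $\|Y\|_2^2=\langle \bar\partial u,\bar\partial Y\rangle$ with $\langle \mathcal R_{(\mathscr J\xi)^{-1}}\bar\partial Y,\bar\partial Y\rangle$. The differences are minor: the paper uses operator monotonicity of $t\mapsto 1/t$ for $\mathcal A:=\Delta^{\otimes}\otimes I_n+\mathcal R_{\mathscr J\xi}\ge\mathcal R_{\mathscr J\xi}$, whereas your direct expansion avoids inverting $\mathcal A$ altogether; your regularization step is unnecessary, since $u=\Delta^{-1}Y\in\Dom(\Delta^{3/2})$ already follows from the spectral theorem for any centered $Y\in\Dom(\Delta^{1/2})$; and the index convention matches $(\mathcal R_{\mathscr J\xi}\eta)_i=\sum_j\eta_j\sharp\bar\partial_i\xi_j$ directly, without invoking Lemma~\ref{second}.
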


\begin{proof}
By Lemma~\ref{second}, the conjugate variables satisfy Schwarz integrability,

\[
\bar{\partial}_j\xi_i=\sigma(\bar{\partial}_i \xi_j),
\]

and each entry $\bar\partial_j\xi_i$ is fixed by the dagger involution. In particular,
$\mathscr J\xi$ is $^\ast$–self-adjoint and strictly positive in
$M_n(M\bar\otimes M^{op})$, and its inverse $(\mathscr{J}\xi)^{-1}$ is then also
$^\ast$–self-adjoint and strictly positive.

\medskip
Recall that the Laplacian $
\Delta := \sum_{i=1}^n \partial_i^\ast \bar\partial_i
$
is a positive self–adjoint operator on $L^2(M)$ with $\ker(\Delta)=\C 1$.  Indeed, if $\Delta Y=0$, then the
Dirichlet form associated to $\Delta$ vanishes, i.e. 

\[
\mathcal E(Y)
:=
\sum_{i=1}^n \|\bar\partial_i Y\|_{L^2(M)\,\bar\otimes\,L^2(M^{op})}^2.
\]
By the free Poincaré inequality (which we recall is always satisfied), we get $\lVert Y-\tau(Y).1\rVert_{L^2(M)}$, so $Y=\tau(Y).1$ Hence $\Delta$ is injective on $L^2_0(M)$ which is a trivially a closed subspace of $L^2(M)$.

Moreover, we deduce
\begin{equation}
    \overline{\mathrm{Ran}(\Delta)}=\mathrm{Ker}(\Delta)^{\perp}=(\mathbb{C}1)^{\perp}=L_0^2(M):=\{Y\in L^2(M):\tau(Y)=0\}.
\end{equation}
and since we know $\tau\circ \phi_t=\tau$ where $\phi_t$ is the semigroup exponentiating $-\Delta$, an immediate differential argument gives $\tau\circ\Delta=0$, so that $L_0^2(M)$ is stable by $\Delta$ (it can even be seen more easily using duality and that $\bar\partial 1=0$).
\bigbreak

But we know a bit more since $\Delta$ is in fact coercive. This comes from free as a consequence of the free Poincar\'e inequality (spectral gap) which tell us that there exists a constant $C_P>0$ such that

\[
\|Y - \tau(Y).1\|_{L^2(M)}^2
\;\le\;
C_P\,\mathcal E(Y),
\qquad Y\in \Dom(\Delta).
\]

In particular, for $Z\in \Dom(\Delta)\cap L^2_0(M)$ (i.e.\ $\tau(Y)=0$), we have

\[
\|Y\|_{L^2(M)}^2
\;\le\;
C_P\,\langle \Delta Y, Y\rangle_{L^2(M)}.
\]

Equivalently,

\[
\langle \Delta Y, Y\rangle_{L^2(M)}
\;\ge\;
C_P^{-1}\,\|Y\|_{L^2(M)}^2,
\qquad Y\in \Dom(\Delta)\cap L^2_0(M).
\]

Thus, on the subspace $L^2_0(M)$, the operator
$\Delta$ is strictly positive and satisfies

\[
\Delta \;\ge\; C_P^{-1} \mathrm{id}
\quad\text{on }L^2_0(M).
\]

In particular, $\Delta$ is injective on $L^2_0(M)$ and has bounded inverse

\[
\Delta^{-1}:L^2_0(M)\to \Dom(\Delta)\cap L^2_0(M),
\]

with $\|\Delta^{-1}\|\le C_P$.

Hence we also have ${\mathrm{Ran}(\Delta)}$ is closed so $\overline{\mathrm{Ran}(\Delta)}={\mathrm{Ran}(\Delta)}=L_0^2(M)$.
\bigskip

Let now $Y\in \Dom(\bar\partial)=\Dom(\Delta^{1/2})$ be arbitrary. Replacing $Y$ by
$Y-\tau(Y).1$ if necessary, we may assume $Y$ is centered, so $Y\in L^2_0(M)$.
Since $\Delta$ is invertible on $L^2_0(M)$, there exists a unique $g := \Delta^{-1}Y \in \Dom(\Delta)\cap L^2_0(M)$, such that $Y = \Delta g$ (which provide thus a solution to a \emph{Poisson}-type equation).

\bigskip
Since $\Delta$ is a positive self–adjoint unbounded operator,  the
spectral theorem for unbounded self-adjoint operator yields a projection–valued measure $E(\cdot)$ on $\sigma(\Delta)\subset[0,\infty)$ such that

\[
\Delta=\int_{\sigma(\Delta)} \lambda\,dE(\lambda),
\]

and for every $Z\in L^2(M)$ the associated scalar spectral measure is

\[
\nu_Z(B):=\langle E(B)Z,Z\rangle_{L^2(M)},\qquad B\subset\sigma(\Delta)\ \text{Borel}.
\]

In particular,

\[
Z\in \Dom(\Delta^{1/2})
\ \Longleftrightarrow\
\int_{\sigma(\Delta)} \lambda\,d\nu_Z(\lambda)<\infty,\qquad
Z\in \Dom(\Delta^{3/2})
\ \Longleftrightarrow\
\int_{\sigma(\Delta)} \lambda^3\,d\nu_Z(\lambda)<\infty.
\]

From $Y\in \Dom(\bar\partial)=\Dom(\Delta^{1/2})$ and $Y=\Delta g$, we now show that
$g\in \Dom(\Delta^{3/2})$. By the spectral theorem,

\[
Y(\lambda)=\lambda\,g(\lambda)\qquad\text{for a.e.\ }\lambda\in\sigma(\Delta),
\]

and the scalar spectral measures satisfy

\[
d\nu_Y(\lambda)=|Y(\lambda)|^2\,d\mu(\lambda)
=\lambda^2\,|g(\lambda)|^2\,d\mu(\lambda)
=\lambda^2\,d\nu_g(\lambda).
\]

Hence

\[
Y\in \Dom(\Delta^{1/2})
\ \Longleftrightarrow\
\int_{\sigma(\Delta)} \lambda\,d\nu_Y(\lambda)
=\int_{\sigma(\Delta)} \lambda^3\,d\nu_g(\lambda)<\infty,
\]

which is exactly the spectral condition for $g\in \Dom(\Delta^{3/2})$. Thus

\[
Y\in \Dom(\Delta^{1/2})\ \text{ and }\ Y=\Delta g
\quad\Longleftrightarrow\quad
g\in \Dom(\Delta^{3/2}).
\]

\medskip

Under the Lipschitz conjugates assumption, the almost–commutation relation~\ref{in2deltaDelta}

\[
\bar{\partial}_i\,\Delta(x)
=
\Delta^{\otimes}\,\bar{\partial}_i(x)
+
\sum_{j=1}^n \bar{\partial}_j(x)\sharp \big(\bar{\partial}_i\xi_j\big),
\qquad x\in \Dom(\Delta^{3/2}),\ i=1,\dots,n.
\]

Since $g\in \Dom(\Delta^{3/2})$ and $Y=\Delta g$, we
can apply this with $x=g$ to obtain, for each $i$,

\[
\bar{\partial}_i Y
=
\bar{\partial}_i(\Delta g)
=
\Delta^{\otimes}(\bar{\partial}_i g)
+
\sum_{j=1}^n \bar{\partial}_j g\sharp\big(\bar{\partial}_i\xi_j\big).
\]

In vector form, this can be rewritten as

\[
\bar\partial Y
=
\big(\Delta^{\otimes}\otimes I_n\big)(\bar\partial g)
+
\mathcal{R}_{\mathscr{J}\xi}(\bar\partial g),
\]

where $\mathcal{R}_{\mathscr{J}\xi}$ is the right–leg multiplication operator associated to
$\mathscr{J}\xi$ (Definition~\ref{def:right-action-RS-explicit}).
\bigskip
Define now the operator

\[
\mathcal{A}
:=
\Delta^{\otimes}\otimes I_n+ \mathcal{R}_{\mathscr{J}\xi}
\]

on the domain

\[
\Dom(\mathcal{A})=\Dom(\Delta^{\otimes}\otimes I_n)=\big(\Dom(\Delta^{\otimes})\big)^{\oplus_n}.
\]

Since $\Delta$ generates a completely Dirichlet form (see Proposition~\ref{resolvent}). Moreover, the tensor extension $(\partial \otimes1)\oplus(1\otimes\partial)$ is again densely defined on $M_0\otimes M_0$ and closable, so it implies that
$\Delta^{\otimes}=\overline{\Delta\otimes1+1\otimes\Delta}$ is a densely defined closed self-adjoint positive operator also generating a completely
Dirichlet form. Hence, $\Delta^{\otimes}\otimes I_n\ge0$ on
$\big(L^2(M){\bar\otimes}\,L^2(M^{op})\big)^{\oplus_n}$.  On the other hand, $\mathcal{R}_{\mathscr{J}\xi}$ is bounded,
self–adjoint and strictly positive on
$\big(L^2(M){\bar\otimes}\,L^2(M^{op})\big)^{\oplus_n}$, since $\mathscr J\xi$ is
strictly positive and invertible in $M_n(M\bar\otimes M^{op})$ and
$\mathcal R_{\mathscr J\xi}$ its corresponding right–multiplication operator thus has by Proposition~\ref{prop:S-T-rep-positivity-explicit} the same properties: invertibility and coercivity. In
particular, there exists $c>0$ (take for example $c:=\|(\mathscr J\xi)^{-1}\|_{M_n(M\bar\otimes M^{op})}^{-1}$, since $\mathcal R_{\mathscr J\xi}^{-1}=\mathcal R_{(\mathscr J\xi)^{-1}}$ and $\|\mathcal R_{(\mathscr J\xi)^{-1}}\|\le \|(\mathscr J\xi)^{-1}\|_{M_n(M\bar\otimes M^{op})}$)  such that

\[
\langle \mathcal{R}_{\mathscr{J}\xi}u,u\rangle_{ \big(L^2(M){\bar\otimes}\,L^2(M^{op})\big)^{\oplus_n}}
\;\ge\;
c\,\|u\|_{ \big(L^2(M){\bar\otimes}\,L^2(M^{op})\big)^{\oplus_n}}^2,
\qquad u\in \big(L^2(M){\bar\otimes}\,L^2(M^{op})\big)^{\oplus_n}.
\]

Therefore, for all $u\in \Dom(\mathcal{A})$,

\begin{eqnarray}
\langle \mathcal{A}u,u\rangle_{ \big(L^2(M){\bar\otimes}\,L^2(M^{op})\big)^{\oplus_n}}
&=&
\langle (\Delta^{\otimes}\otimes I_n)u,u\rangle_{ \big(L^2(M){\bar\otimes}\,L^2(M^{op})\big)^{\oplus_n}}
+
\langle \mathcal{R}_{\mathscr{J}\xi}u,u\rangle_{ \big(L^2(M){\bar\otimes}\,L^2(M^{op})\big)^{\oplus_n}}\nonumber\\
&\ge&
c\,\|u\|_{\big(L^2(M){\bar\otimes}\,L^2(M^{op})\big)^{\oplus_n}}^2.
\end{eqnarray}

Thus $\mathcal{A}$ is self–adjoint, strictly positive and has bounded inverse

\[
\mathcal{A}^{-1}:\big(L^2(M){\bar\otimes}\,L^2(M^{op})\big)^{\oplus_n}\to
\big(L^2(M){\bar\otimes}\,L^2(M^{op})\big)^{\oplus_n}.
\]

By Löwner monotonicity (operator–monotonicity of $t\mapsto 1/t$ on $(0,\infty)$),
the inequality $\mathcal{A}\ge \mathcal{R}_{\mathscr{J}\xi}$ implies

\[
\mathcal{A}^{-1}\ \le\ \mathcal{R}_{\mathscr{J}\xi}^{-1}
\quad\text{on }\big(L^2(M){\bar\otimes}\,L^2(M^{op})\big)^{\oplus_n}.
\]

By Proposition~\ref{prop:S-T-rep-positivity-explicit}, we have the inverse rule:

\[
\mathcal{R}_{\mathscr{J}\xi}^{-1}
=
\mathcal{R}_{(\mathscr{J}\xi)^{-1}},
\]

and therefore

\[
\mathcal{A}^{-1}\ \le\ \mathcal{R}_{(\mathscr{J}\xi)^{-1}}
\quad\text{on }\big(L^2(M){\bar\otimes}\,L^2(M^{op})\big)^{\oplus_n}.
\]

From the almost–commutation identity we have

\[
\bar\partial Y=\mathcal{A}\,\bar\partial g,
\]

and since $\bar\partial g\in \Dom(A)$, we may apply $\mathcal{A}^{-1}$ to obtain

\[
\bar\partial g=\mathcal{A}^{-1}\bar\partial Y.
\]

\medskip
Using this relation and the integration–by–parts identity, we compute
\begin{align*}
\|Y\|_{L^2(M)}^2
&
=\big\langle \Delta g, Y\big\rangle_{L^2(M)}\\
&=\big\langle \bar\partial g, \bar\partial(\Delta g)\big\rangle_{(L^2(M)\,\bar\otimes\, L^2(M^{op}))^{\oplus_n}}\\
&=\big\langle \bar\partial g, \mathcal{A}\,\bar\partial g\big\rangle_{(L^2(M)\,\bar\otimes\, L^2(M^{op}))^{\oplus_n}}\\
&=\big\langle \mathcal{A}^{-1}\bar{\partial} Y, \bar{\partial} Y\big\rangle_{(L^2(M)\,\bar\otimes\, L^2(M^{op}))^{\oplus_n}}\\
&\le \big\langle \mathcal{R}_{(\mathscr{J}\xi)^{-1}}\bar{\partial} Y, \bar{\partial} Y\big\rangle_{(L^2(M)\,\bar\otimes\, L^2(M^{op}))^{\oplus_n}}.
\end{align*}
Expanding the last inequality gives, for each $i$,

\[
(\mathcal{R}_{(\mathscr{J}\xi)^{-1}}\bar\partial Y)_i
=
\sum_{j=1}^n \bar{\partial}_j Y\sharp\big((\mathscr{J}\xi)^{-1}\big)_{ji},
\]

so that we get

\[
\|Y\|_{L^2(M)}^2
\le
\sum_{i,j=1}^n
\Big\langle\,\bar{\partial}_j Y \sharp \big((\mathscr{J}\xi)^{-1}\big)_{ji} ,
\bar{\partial}_i Y\Big\rangle_{L^2(M)\,\bar\otimes\, L^2(M^{op})}.
\]

If, moreover, $\mathscr{J}\xi\ \ge\ c\,(1\otimes 1)\otimes I_n$ in the $C^\ast$–sense (note that invertibility and the positivity assumption on $\mathscr{J}\xi$ automatically gives such a $c:=\lVert (\mathscr{J}\xi)^{-1}\rVert_{M_n(M\bar\otimes M^{op})}^{-1}$),
then by positivity and functional calculus (the map $t\mapsto 1/t$ is
operator–monotone decreasing on $(c,\infty)$),

\[
(\mathscr{J}\xi)^{-1} \le \tfrac{1}{c}\,(1\otimes 1)\otimes I_n,
\]

and applying $\mathcal{R}$ yields

\[
\mathcal{R}_{(\mathscr{J}\xi)^{-1}}\ \le\ \tfrac{1}{c}\,\mathrm{id}
\quad\text{on }\big(L^2(M){\bar\otimes}\,L^2(M^{op})\big)^{\oplus_n}.
\]

Substituting this into the previous inequality gives the free Poincar\'e
(curvature–dimension) inequality

\[
\big\|Y-\tau(Y).1\big\|_{L^2(M)}^2
\ \le\
\frac{1}{c}\sum_{i=1}^n
\big\|\bar{\partial}_i Y\big\|^2_{L^2(M)\,\bar\otimes\, L^2(M^{op})},
\]

for all $Y\in \Dom(\bar\partial)$, which is the desired conclusion.
\end{proof}
\qed
\begin{remark}[Extension to real coercive coassociative derivations]\label{coass}
The Brascamp--Lieb--Poincar\'e inequality extends, with the same proof, to any
closable real coassociative derivation into the coarse correspondence with Lipschitz conjugates variables (w.r.t the derivation), in the
sense of \cite[Section~2.2, Assumption~2.1]{Dab14}, provided one has suitable
coercivity and boundedness assumptions.

Let $M=W^\ast(X_1,\ldots,X_n)$ and let

\[
\delta=(\delta_1,\ldots,\delta_n):\Dom(\delta)\longrightarrow
\big(L^2(M)\bar\otimes L^2(M^{op})\big)^{\oplus_n}
\]

be a densely defined derivation, real in the sense that
$(\delta_j(x))^{\dagger}=\delta_j(x^\ast)$ for all $x\in\Dom(\delta)$, and
coassociative in the sense that

\[
(\delta_j\otimes \mathrm{id})\circ \delta_i(x)
=
(\mathrm{id}\otimes \delta_i)\circ\delta_j(x),
\qquad x\in\Dom(\delta),\ 1\le i,j\le n.
\]

Let $\xi_j:=\delta_j^\ast(1\otimes 1)$ be the Lipschitz conjugate variables (so that
$\delta$ is closable), and define the closed Jacobian

\[
\mathscr J_\delta\xi
:=
\big(\,\overline{\delta}_j\,\xi_i\,\big)_{i,j=1}^n
\in M_n(M\bar\otimes M^{op}),
\]

together with the Laplacian
\(
\Delta_\delta:=\sum_{j=1}^n \delta_j^\ast\bar\delta_j
\)
on $L^2(M)$.

Assume in addition that:
\begin{enumerate}
  \item $\delta$ is closable, real and coassociative (so the almost--commutation
        identities and domain inclusions of Lemma~\ref{in2deltaDelta} hold);
  \item $\mathscr J_\delta\xi$ is bounded, positive and invertible in
        $M_n(M\bar\otimes M^{op})$ (equivalently, coercive on
        $\big(L^2(M)\bar\otimes L^2(M^{op})\big)^{\oplus_n}$);
  \item $\Delta_\delta$ is coercive on $L^2_0(M)$, so that
        $\ker(\Delta_\delta)=\C 1$.
\end{enumerate}
Then the same argument as in the free difference quotient case yields, for every
$Y\in\Dom(\overline\delta)$,

\[
\|Y-\tau(Y)1\|_{L^2(M)}^2
\le
\sum_{i,j=1}^n 
\big\langle\,\overline{\delta}_j Y\sharp
\big((\mathscr{J}_\delta\xi)^{-1}\big)_{ji},
\overline{\delta}_j Y\big\rangle_{L^2(M)\bar\otimes L^2(M^{op})}.
\]

 If
$\mathscr J_\delta\xi\ge c\,(1\otimes 1)\otimes I_n$ in the $C^\ast$--sense,
one recovers a Poincar\'e inequality with constant $1/c$.
\end{remark}

\begin{remark}[An example of application: free Gibbs states]
Let $V=V^\ast$ be a self-adjoint non‑commutative potential (polynomial, for
simplicity), and let $\tau_V$ be (assuming it does exist) a free Gibbs state generated by a bounded
$n$‑tuple $X$ satisfying the Schwinger--Dyson equations with conjugate variables
$\xi=\mathscr D V(X)$.  We say that $V$ satisfies $CD(c,\infty)$ if its
non‑commutative Hessian

\[
\mathscr J\mathscr D V=(\partial_j\mathscr D_i V)_{ij}
\]

obeys the algebraic positivity

\[
\mathscr J\mathscr D V \ \ge\ c\,(1\otimes 1)\otimes I_n
\qquad\text{in } M_n(\mathscr{P}\otimes \mathscr{P}^{op}),
\]
i.e.\ $\mathscr J\mathscr D V - c(1\otimes 1)\otimes I_n = Q^\ast Q$ for some
$Q$.
\par\vspace{0.5em}
equivalently on $M_n(\mathscr{P}\otimes \mathscr{P}^{op})$, since by Mai-Speicher-Weber~\cite{MSW} and Dabrowski~\cite[Lemma~37]{Dab14} the $X_i$ are algebraically free.
\par\vspace{0.5em}
Since $M_0$ is weakly dense in $M=W^\ast(X_1,\ldots,X_n)$ and the standard representation of
$M\bar\otimes M^{op}$ is faithful, this positivity extends to

\[
\mathscr J\mathscr D V(X) \ \ge\ c\,(1\otimes 1)\otimes I_n
\qquad\text{in } M_n(M\bar\otimes M^{op}).
\]
\end{remark}

\section{A free Obata's rigidity principle}
 \bigbreak
 This analytic framework in the previous section motivates the central question of the paper: \emph{can the rigidity phenomena of Cheng--Zhou \cite{ChengZhou} in the classical setting be translated into the free world?}
 \begin{flushleft}
We now prove an essential proposition to our approach: saturation of the free Poincaré inequality forces an eigenfunction relation for the Laplacian.     
 \end{flushleft}

\begin{proposition}\label{prop1}
In the setting of Theorem~\ref{thm:BLP-Lipschitz}, assume the curvature--dimension bound $CD(1,\infty)$:

\[
\mathscr{J}\xi\ \ge\ (1\otimes 1)\otimes I_n
\quad\text{in } M_n\!\big(M\,\bar\otimes\,M^{op}\big).
\]

Then for every centered $Y\in \Dom(\Delta^{1/2})=\Dom(\bar{\partial})$,

\[
\|Y\|_{L^2(M)}^2\ \le\ \sum_{i=1}^n \|\bar{\partial}_i Y\|_{L^2(M){\bar\otimes}\,L^2(M^{op})}^2.
\]

If $f\in \Dom(\Delta^{1/2})\cap L^2_0(M)$ saturates this free Poincaré inequality,

\[
\|f\|_{L^2(M)}^2\ =\ \sum_{i=1}^n \|\bar{\partial}_i f\|_{L^2(M){\bar\otimes}\,L^2(M^{op})}^2\ =:\ \mathcal{E}(f),
\]

then $f\in \Dom(\Delta)$ and $\Delta f=f$.
\end{proposition}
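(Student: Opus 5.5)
The inequality itself is the special case $c=1$ of Theorem~\ref{thm:BLP-Lipschitz}. Under $CD(1,\infty)$ the matrix $\mathscr J\xi$ is positive and invertible with $(\mathscr J\xi)^{-1}\le (1\otimes 1)\otimes I_n$. The theorem then gives $\|Y\|_{L^2(M)}^2\le \mathcal E(Y)$ for every centered $Y\in\Dom(\bar\partial)$. Equivalently, since $\mathcal E(Y)=\|\Delta^{1/2}Y\|^2$ and $\Delta$ preserves $L^2_0(M)$ with $\ker\Delta=\C1$, we get the spectral statement $\Delta\ge \mathrm{id}$ on $L^2_0(M)$. More precisely, for $Y\in L^2_0(M)$ the scalar spectral measure $\nu_Y$ of $\Delta$ satisfies
\[
\int_{\sigma(\Delta)}\lambda\,d\nu_Y(\lambda)\ \ge\ \int_{\sigma(\Delta)} d\nu_Y(\lambda),
\]
for every $Y$. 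Applying this to $E(B)Y$ for Borel sets $B\subset[0,1)$, I conclude that $E([0,1))$ vanishes on $L^2_0(M)$. Hence $\nu_Y$ is supported in $[1,\infty)$ for every centered $Y$.

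For the saturation statement I will use the spectral theorem rather than a variational argument. Let $f\in\Dom(\Delta^{1/2})\cap L^2_0(M)$ satisfy $\|f\|^2=\mathcal E(f)$. Then
\[
0=\mathcal E(f)-\|f\|^2=\int_{[1,\infty)}(\lambda-1)\,d\nu_f(\lambda).
\]
The integrand is nonnegative on the support of $\nu_f$, so $\nu_f$ is concentrated on $\{1\}$. It follows that $f=E(\{1\})f$, and then $\int\lambda^2\,d\nu_f<\infty$, which gives $f\in\Dom(\Delta)$ with $\Delta f=\int\lambda\,dE(\lambda)f=f$. An alternative route is a first-variation argument: the quadratic form $Q(Y)=\mathcal E(Y)-\|Y\|^2$ is nonnegative on $\Dom(\Delta^{1/2})\cap L^2_0(M)$ and vanishes at $f$. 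Expanding $Q(f+tZ)\ge0$ in real $t$, and also in $it$, gives $\langle \Delta^{1/2}f,\Delta^{1/2}Z\rangle=\langle f,Z\rangle$ for all centered $Z$. The same identity holds trivially for $Z=1$, because $\bar\partial 1=0$ and $\tau(f)=0$. Hence $f\in\Dom((\Delta^{1/2})^\ast\Delta^{1/2})=\Dom(\Delta)$ and $\Delta f=f$.

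The only delicate point is making sure the inequality from Theorem~\ref{thm:BLP-Lipschitz} holds on the whole form domain $\Dom(\Delta^{1/2})$ and not only on a core. The theorem is stated for every $Y\in\Dom(\bar\partial)=\Dom(\Delta^{1/2})$, so this is available. One also has to check that $\mathcal E(Y)=\langle\Delta^{1/2}Y,\Delta^{1/2}Y\rangle$, which is the definition $\Delta=\bar\partial^\ast\bar\partial$. After that, the spectral localisation argument is routine.
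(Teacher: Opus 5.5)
Your proof is correct, but your main route differs from the paper's. Like you, the paper simply inherits the inequality from Theorem~\ref{thm:BLP-Lipschitz} with $c=1$. For the saturation statement it uses only the first-variation (Euler--Lagrange) argument: it perturbs $f$ by $\varepsilon h$ and $\varepsilon(ih)$ with $h$ centered, obtains $\langle\bar\partial f,\bar\partial h\rangle=\langle f,h\rangle$, and reads off $\bar\partial f\in\Dom(\partial^\ast)$ and $\Delta f=f$ from the definition of the adjoint.

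Your primary argument instead localizes the spectral resolution of $\Delta$. Poincar\'e with constant $1$ is equivalent to $E([0,1))$ vanishing on $L^2_0(M)$. Saturation then forces $\nu_f$ to be carried by $\{1\}$, i.e.\ $f=E(\{1\})f$. You should state explicitly that $E(B)$ commutes with the projection onto $\ker\Delta=\C1$, so $E(B)Y$ stays centered and the Poincar\'e inequality may be applied to it. This route costs the spectral theorem but proves more than the paper's argument. It shows at once that $\sigma(\Delta)\subset\{0\}\cup[1,\infty)$. It also shows that the saturators are \emph{exactly} the elements of $\ker(\Delta-\mathrm{id})\cap L^2_0(M)$, which is the converse implication the paper later invokes informally in Corollary~\ref{cor:multi-free-obata}.

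Your alternative variational route is essentially the paper's. Your observation that the identity also holds for $Z=1$ (because $\bar\partial 1=0$ and $\tau(f)=0$) supplies a step the paper leaves implicit. The adjoint characterization requires testing against all of $\Dom(\bar\partial)$, not only its centered part.
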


\begin{proof}
Recall that $\Dom(\Delta^{1/2})=\Dom(\bar{\partial})$ is the form domain, and fix 
$h\in \Dom(\Delta^{1/2})\cap L^2_0(M)$. 
For $\varepsilon\in\mathbb{R}$, the perturbations $f+\varepsilon h$ and 
$f+\varepsilon (ih)$ are centered (with $i^2=-1$), hence

\[
\|f+\varepsilon h\|_{L^2(M)}^2\ \le\ \mathcal{E}(f+\varepsilon h),
\qquad
\|f+\varepsilon(ih)\|_{L^2(M)}^2\ \le\ \mathcal{E}(f+\varepsilon(ih)),
\]

with equality at $\varepsilon=0$ by saturation. 
Expanding at $\varepsilon=0$ both left and right hand sides gives

\[
\|f+\varepsilon h\|_{L^2(M)}^2
= \|f\|_{L^2(M)}^2 + 2\varepsilon\,\Re\langle f,h\rangle_{L^2(M)} + O(\varepsilon^2),
\]

and

\[
\mathcal{E}(f+\varepsilon h)
= \mathcal{E}(f) + 2\varepsilon\,\sum_{i=1}^n 
\Re\langle \bar{\partial}_i f,\bar{\partial}_i h\rangle_{L^2(M){\bar\otimes}\,L^2(M^{op})} 
+ O(\varepsilon^2).
\]

Since $\|f\|_{L^2(M)}^2=\mathcal{E}(f)$, comparing the linear terms for 
$f+\varepsilon h$ gives

\[
\Re\langle f,h\rangle_{L^2(M)}
=
\sum_{i=1}^n 
\Re\langle \bar{\partial}_i f,\bar{\partial}_i h\rangle_{L^2(M)\bar\otimes L^2(M^{op})}.
\]

Applying the same argument to $f+\varepsilon(ih)$ (using conjugate linearity of the inner product
in the second variable), we get,

\[
\Re\langle f,ih\rangle_{L^2(M)}
=\Im\langle f,h\rangle_{L^2(M)},
\qquad
\Re\langle \bar{\partial}_i f,\bar{\partial}_i(ih)\rangle_{L^2(M)\bar\otimes L^2(M^{op})}
=\Im\langle \bar{\partial}_i f,\bar{\partial}_i h\rangle_{L^2(M)\bar\otimes L^2(M^{op})},
\]

and so, we obtain,

\[
\Im\langle f,h\rangle_{L^2(M)}
=
\sum_{i=1}^n 
\Im\langle \bar{\partial}_i f,\bar{\partial}_i h\rangle_{L^2(M)\bar\otimes L^2(M^{op})}.
\]

Hence, adding both real and imaginary parts, we get:

\[
\sum_{i=1}^n \langle \bar{\partial}_i f,\bar{\partial}_i h\rangle_{L^2(M){\bar\otimes}\,L^2(M^{op})}
=\langle \bar\partial f,\bar\partial h\rangle_{(L^2(M)\bar\otimes L^2(M^{op}))^{\oplus_n}}
=\langle f,h\rangle_{L^2(M)},
\qquad 
\forall h\in \Dom(\Delta^{1/2})\cap L^2_0(M).
\]

Therefore, we have, by the adjoint characterization,

\[
\bar\partial f\in \Dom(\partial^{\,*}) 
\quad\text{and}\quad 
\partial^{\,*}(\bar\partial f)=f.
\]

Hence, by definition,

\[
f\in \Dom(\Delta)
\quad\text{and}\quad 
\Delta f=\partial^{*}\bar\partial f=f.
\]

\end{proof}

\qed

\begin{remark}
The argument above can also be reformulated in a \emph{cleaner} variational way.  
Indeed, the Dirichlet form $
f\mapsto \mathcal{E}(f)$ is \emph{real} Fr\'echet differentiable on the Hilbert space $
\mathcal{H}:=\Dom(\Delta^{1/2})\cap L_0^2(M),
$ equipped with the graph norm 
\(
\|f\|_{\mathcal{H}}^2:=\|f\|_{L^2(M)}^2+\mathcal{E}(f).
\) 
Hence on $\mathcal{H}\setminus \left\{0\right\}$, the Rayleigh quotient

\[
R(g)=\frac{\mathcal{E}(f)}{\|f\|_{L^2(M)}^2},
\]

achieves its minimum at $f$.  
The corresponding Euler--Lagrange equation yields 
\(\Delta f = f\), 
recovering the conclusion of Proposition~\ref{prop1}.  
We omit the details, as this is simply a reformulation of the proof above.
\end{remark}

Let us just recall an easy lemma (whose proof is omitted) showing that such eigenfunctions have arbitrary fractional regularity.
\begin{lemma}\label{regul}
Let $\Delta$ be a selfadjoint, positive operator on $L^2(M)$.
Suppose $Y\in \Dom(\Delta)$ satisfies

\[
\Delta Y=\lambda\,Y \qquad \text{for some } \lambda\ge 0.
\]

Then for every $\alpha\ge 0$ one has

\[
Y\in \Dom(\Delta^\alpha)
\qquad\text{and}\qquad
\Delta^\alpha Y = \lambda^\alpha\,Y,
\]

where $\Delta^\alpha$ is defined by the Borel functional calculus for $\Delta$.
\end{lemma}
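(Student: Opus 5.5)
The plan is to use the spectral theorem for the positive self-adjoint operator $\Delta$. Let $E(\cdot)$ be its projection-valued measure on $\sigma(\Delta)\subset[0,\infty)$, and let $\nu_Y(B)=\langle E(B)Y,Y\rangle_{L^2(M)}$ be the scalar spectral measure of $Y$. The hypothesis $\Delta Y=\lambda Y$ says that $Y$ lies in the range of the spectral projection $E(\{\lambda\})$.

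To see this, note that
\[
\int_{\sigma(\Delta)} |\mu-\lambda|^2\,d\nu_Y(\mu)=\|(\Delta-\lambda)Y\|_{L^2(M)}^2=0 .
\]
Hence $\nu_Y$ is concentrated on $\{\lambda\}$, that is, $\nu_Y=\|Y\|^2_{L^2(M)}\,\delta_\lambda$. Equivalently, $E(\sigma(\Delta)\setminus\{\lambda\})Y=0$ and $E(\{\lambda\})Y=Y$.

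With this in hand, domain membership is a direct computation. By definition of the Borel functional calculus, $Y\in\Dom(\Delta^\alpha)$ if and only if
\[
\int_{\sigma(\Delta)}\mu^{2\alpha}\,d\nu_Y(\mu)<\infty .
\]
Since $\nu_Y$ is a point mass at $\lambda$, this integral equals $\lambda^{2\alpha}\|Y\|^2_{L^2(M)}<\infty$. This holds for every $\alpha\ge0$, with the convention $0^0=1$ when $\lambda=0$ and $\alpha=0$.

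For the eigenvalue identity, write $Y=E(\{\lambda\})Y$. The multiplicativity of the functional calculus gives
\[
\Delta^\alpha E(\{\lambda\})=(\mu^\alpha\mathbf 1_{\{\lambda\}})(\Delta)=\lambda^\alpha E(\{\lambda\}),
\]
because $\mu^\alpha\mathbf 1_{\{\lambda\}}(\mu)=\lambda^\alpha\mathbf 1_{\{\lambda\}}(\mu)$ pointwise. Applying this to $Y$ yields $\Delta^\alpha Y=\lambda^\alpha Y$.

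There is no real obstacle. The only point needing care is the justification that the relation $\Delta^\alpha E(\{\lambda\})=\lambda^\alpha E(\{\lambda\})$ holds for an unbounded function $\mu\mapsto\mu^\alpha$. This follows because $\operatorname{Ran}E(\{\lambda\})\subset\Dom(\Delta^\alpha)$, by the integrability computation above, and on that range the functional calculus of the product of a bounded and an unbounded Borel function is the composition.

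In the intended application, $\lambda=1$, and this gives $f\in\Dom(\Delta^{3/2})$ with $\Delta^{3/2}f=f$. That is exactly the regularity needed to invoke the almost-commutation relation of Lemma~\ref{in2deltaDelta}(3) for the saturator from Proposition~\ref{prop1}.
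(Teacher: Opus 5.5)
Your proof is correct. The paper omits the proof of this lemma as easy, and your argument is the standard spectral-theorem reasoning it implicitly relies on: you show $\nu_Y=\|Y\|_{L^2(M)}^2\delta_\lambda$ and read off both the domain condition and $\Delta^\alpha Y=\lambda^\alpha Y$. This is the same scalar-spectral-measure argument the paper itself uses to show $g\in\Dom(\Delta^{3/2})$ in the proof of Theorem~\ref{thm:BLP-Lipschitz}.

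A minor simplification: you can bypass the product rule for unbounded functional calculus by noting directly that $\|\Delta^\alpha Y-\lambda^\alpha Y\|_{L^2(M)}^2=\int|\mu^\alpha-\lambda^\alpha|^2\,d\nu_Y(\mu)=0$.
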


\begin{flushleft}
We now explain why equality and the eigenvalue relation pass to the real and imaginary parts of a complex extremizer. It will be useful later on in the paper, when we will want to handle only \emph{self-adjoint} elements.
\end{flushleft}

\begin{corollary}\label{cor1}
Under the curvature assumption $CD(1,\infty)$, if $f\in \Dom(\Delta)\cap L^2_0(M)$ satisfies $\Delta f=f$ and $\|f\|_{L^2(M)}^2=\mathcal{E}(f)$, then so do $\Re f$ and $\Im f$. In particular, without loss of generality one may assume $f=f^\ast:=Jf$.
\end{corollary}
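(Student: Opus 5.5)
The plan is to show that $\Delta$, and hence the energy identity, commutes with the Tomita conjugation $J$. Then $\Re f=\tfrac12(f+Jf)$ and $\Im f=\tfrac1{2i}(f-Jf)$ inherit the eigenvalue relation by linearity. Saturation of the Poincaré inequality for an eigenfunction with eigenvalue $1$ is automatic, since $\mathcal E(g)=\langle\Delta g,g\rangle$ on $\Dom(\Delta)$.

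\textbf{Step 1: $J$ preserves $\Dom(\bar\partial)$ and $\mathcal E$.} By the Remark following the closability proposition, $\Dom(\bar\partial)$ is $^\ast$-stable and $\bar\partial_i(x^\ast)=(\bar\partial_i x)^\dagger$. The map $\dagger$ extends to a conjugate-linear isometric involution of $L^2(M)\bar\otimes L^2(M^{op})$: on simple tensors $\|b^\ast\otimes a^\ast\|=\|a\|\|b\|$, and this extends by density using traciality. Hence $\mathcal E(Jx)=\mathcal E(x)$. Polarizing, $\langle\bar\partial Jx,\bar\partial Jy\rangle=\overline{\langle\bar\partial x,\bar\partial y\rangle}$ for $x,y\in\Dom(\bar\partial)$.

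\textbf{Step 2: $J\Delta=\Delta J$.} Take $x\in\Dom(\Delta)$ and $h\in\Dom(\bar\partial)$. Then
\[
\langle\bar\partial Jx,\bar\partial h\rangle=\overline{\langle\bar\partial x,\bar\partial Jh\rangle}=\overline{\langle\Delta x,Jh\rangle}=\langle J\Delta x,h\rangle,
\]
where the last equality uses that $J$ is antiunitary with $J^2=1$. By the adjoint characterization used in Proposition~\ref{prop1}, this gives $Jx\in\Dom(\Delta)$ and $\Delta Jx=J\Delta x$.

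\textbf{Step 3: conclusion.} Since $\Delta f=f$, we get $\Delta(Jf)=J f$. By linearity, $\Delta\Re f=\Re f$ and $\Delta\Im f=\Im f$. Both parts are centered, because $\tau(\Re f)=\Re\tau(f)=0$ and likewise for $\Im f$. For $g\in\{\Re f,\Im f\}$ we then have $\mathcal E(g)=\langle\Delta g,g\rangle=\|g\|^2$, so each part saturates the inequality. Since $f=\Re f+i\Im f$ and $f\neq0$, at least one of the two parts is nonzero. That part is self-adjoint, i.e. fixed by $J$, and can replace $f$ as the extremizer, which gives the ``without loss of generality'' claim.

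\textbf{Main obstacle.} The only delicate point is Step 1. One must check that $\dagger$ is an antiunitary on the whole $L^2$ tensor product, and that the identity $\bar\partial_i(x^\ast)=(\bar\partial_i x)^\dagger$ holds for all $x$ in the closed domain, not only on polynomials. Both facts are recorded in the paper's preliminaries, so the argument is short once they are invoked.
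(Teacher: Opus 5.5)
Your proof is correct. The key step, that $J$ preserves $\Dom(\Delta)$ and $\Delta J=J\Delta$, is the same computation the paper performs using the real property of $\bar\partial$ and the identity $\langle a,b\rangle=\langle b^\dagger,a^\dagger\rangle$. Your version is slightly cleaner: you start from the form $\langle\bar\partial Jx,\bar\partial h\rangle$ and then invoke the adjoint characterization. The paper instead writes $\langle\Delta(x^\ast),y\rangle$ before it has shown that $x^\ast\in\Dom(\Delta)$.

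You genuinely diverge from the paper in the saturation step. After obtaining $\Delta u=u$ and $\Delta v=v$, the paper proves two Pythagorean splittings: $\|f\|^2=\|u\|^2+\|v\|^2$ by traciality, and $\mathcal E(f)=\mathcal E(u)+\mathcal E(v)$ by reality of $\bar\partial$. It then applies the $CD(1,\infty)$ Poincaré inequality to $u$ and $v$ separately, which forces equality term by term. You instead observe that $\mathcal E(g)=\langle\Delta g,g\rangle$ on $\Dom(\Delta)$, so every eigenfunction with eigenvalue $1$ saturates automatically. This makes the second half of the paper's argument redundant and shows that the curvature hypothesis plays no role in this corollary.

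The paper's splitting argument has one merit of its own: it works purely at the form level. It transfers saturation from $f$ to $\Re f$ and $\Im f$ for any saturator in $\Dom(\bar\partial)$, without first knowing $\Delta f=f$, using only $\ast$-stability of $\Dom(\bar\partial)$. The price is that it needs the sharp Poincaré inequality.

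One small point in your Step 1: checking $\|b^\ast\otimes a^\ast\|=\|a\|\,\|b\|$ on simple tensors does not by itself give boundedness on finite sums. What you need is $\langle(a\otimes b)^\dagger,(c\otimes d)^\dagger\rangle=\langle c\otimes d,a\otimes b\rangle$, which is exactly where traciality enters. Equivalently, you can write $\dagger=\sigma\circ(J\otimes J)$, which is manifestly antiunitary.
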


\begin{proof}
Write $f=u+iv$ with $u=\Re f$, $v=\Im f$. Since $\tau(f)=0$, also $\tau(u)=\tau(v)=0$.  
We also have the $\ast$–stability of $\Dom(\Delta)$ and complex linearity, checked using the \emph{real} property of free difference quotients. Indeed, fix $x\in \Dom(\Delta)$, $y\in \Dom( \partial)$, then
\begin{eqnarray}
    \langle \Delta (x^*),y\rangle_{L^2(M)}&=&\sum_{i=1}^n \langle \bar \partial_i (x^*),\bar\partial_i y\rangle_{L^2(M)\bar\otimes L^2(M^{op})}\nonumber\\
    &=&\sum_{i=1}^n \langle (\bar \partial_i x)^{\dagger},\bar\partial_i y\rangle_{L^2(M)\bar\otimes L^2(M^{op})}\nonumber\\
    &=& \sum_{i=1}^n \langle (\bar \partial_i y)^{\dagger},\bar\partial_i x\rangle_{L^2(M)\bar\otimes L^2(M^{op})}\nonumber\\
    &=& \sum_{i=1}^n \langle \bar \partial_i (y^*),\bar\partial_i x\rangle_{L^2(M)\bar\otimes L^2(M^{op})}\nonumber\\
    &=& \langle y^*,\Delta x\rangle_{L^2(M)}=\langle (\Delta x)^*,y\rangle_{L^2(M)}
\end{eqnarray}
where we used for $a,b\in L^2(M)\bar\otimes L^2(M^{op})$ that $\langle a,b\rangle_{L^2(M)\bar\otimes L^2(M^{op})}=\langle b^{\dagger},a^{\dagger}\rangle_{L^2(M)\bar\otimes L^2(M^{op})}$. Hence $x^*\in \Dom(\Delta)$, and $\Delta(x^*)=(\Delta(x))^*:=J\Delta(x)$.
\bigbreak

Hence, we have $\Delta(f^*)=f^*$, and we get,

\[
\Delta u=\Re(\Delta f)=\Re(f)=u,\qquad
\Delta v=\Im(\Delta f)=\Im(f)=v.
\]

For the norms, traciality of $\tau$ gives

\[
\|f\|_{L^2(M)}^2=\|u\|_{L^2(M)}^2+\|v\|_{L^2(M)}^2.
\]

For the Dirichlet energies, the \emph{real} property for the free difference quotients and orthogonality yield

\[
\mathcal{E}(f)=\mathcal{E}(u)+\mathcal{E}(v).
\]

By hypothesis $\|f\|^2_{L^2(M)}=\mathcal{E}(f)$, hence

\[
\|u\|^2_{L^2(M)}+\|v\|^2_{L^2(M)}=\mathcal{E}(u)+\mathcal{E}(v).
\]

Therefore,

\[
(\|u\|^2_{L^2(M)}-\mathcal{E}(u))+(\|v\|^2_{L^2(M)}-\mathcal{E}(v))=0,
\]

By the Poincaré inequality under $CD(1,\infty)$, we have
$\|u\|_{L^2(M)}^2\le \mathcal{E}(u)$ and $\|v\|_{L^2(M)}^2\le \mathcal{E}(v)$, so each
difference $\|u\|_2^2-\mathcal{E}(u)$ and $\|v\|_2^2-\mathcal{E}(v)$ is
non-positive. Since their sum is zero, they must both vanish, and hence, forces equality termwise:

\[
\|u\|^2_{L^2(M)}=\mathcal{E}(u),\qquad \|v\|^2_{L^2(M)}=\mathcal{E}(v).
\]

\bigbreak
Thus $u,v$ are centered (eigen)functions saturating the free Poincaré inequality. Hence, we may assume without loss of generality $f=f^\ast$.
\end{proof}

\qed

\begin{flushleft}
    The following lemma show a nice \emph{tensoring} of the Dirichlet identity involving second-order free differences quotients, as a consequence of the Dirichlet identity at first order.
\end{flushleft}
\begin{proposition} \label{prop:tensor-dirichlet-Delta} In the setting of Theorem~\ref{thm:BLP-Lipschitz} with the generator \[ \Delta\ :=\ \sum_{j=1}^n \partial_j^{\,*}\,\bar{\partial}_j\ \ge 0, \qquad \Dom(\Delta^{1/2})=\Dom(\bar{\partial}), \] and the tensor extension of the closed Laplacian \[ \Delta^{\otimes}\ :=\ \overline{\Delta\otimes \mathrm{id}+\mathrm{id}\otimes \Delta}\ \ge 0 \quad\text{on }L^2(M){\bar\otimes}\,L^2(M^{op}). \] Then, for any $U,V\in \A\otimes \A^{op}$, \begin{align*} \big\langle \Delta^{\otimes} U,\ V\big\rangle_{L^2(M){\bar\otimes}\,L^2(M^{op})} &=\sum_{j=1}^n \Big( \big\langle (\partial_j\otimes \mathrm{id})U,\ (\partial_j\otimes \mathrm{id})V\big\rangle_{L^2(M)^{\otimes 3}}\\ &\hspace{2.7cm}+ \big\langle (\mathrm{id}\otimes \partial_j)U,\ (\mathrm{id}\otimes \partial_j)V\big\rangle_{L^2(M)^{\otimes 3}} \Big). \end{align*} \end{proposition}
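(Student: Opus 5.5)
By bilinearity (linear in $U$, conjugate-linear in $V$) it suffices to treat simple tensors $U=a\otimes b$ and $V=c\otimes d$ with $a,b,c,d\in\A$. Polynomials lie in $\Dom(\Delta)$: under the Lipschitz conjugate variable condition, Lemma~\ref{in2deltaDelta} gives $\Dom(\partial)\subset\Dom(\Delta_j)$, and $\partial_j^*$ is explicitly computable on $\A\otimes\A^{op}$ since $\xi_j\in M$ by Proposition~\ref{prop5}. Hence $a\otimes b$ lies in the algebraic core $\Dom(\Delta)\otimes\Dom(\Delta)$ on which $\Delta^{\otimes}$ is defined, and
\[
\Delta^{\otimes}(a\otimes b)=\Delta a\otimes b+a\otimes \Delta b .
\]

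Next I pair this with $c\otimes d$, using the product structure of the inner product on $L^2(M)\bar\otimes L^2(M^{op})$:
\[
\langle \Delta^{\otimes}U,V\rangle=\langle \Delta a,c\rangle\langle b,d\rangle+\langle a,c\rangle\langle \Delta b,d\rangle .
\]
By the definition $\Delta=\sum_j\partial_j^*\bar\partial_j$ and $a\in\Dom(\Delta)$, $c\in\Dom(\bar\partial)$, the first factor is $\langle\Delta a,c\rangle=\sum_j\langle\partial_j a,\partial_j c\rangle_{L^2(M)\bar\otimes L^2(M^{op})}$, and similarly for $b,d$.

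It then remains to recognize the right-hand side of the statement. Since $(\partial_j\otimes\mathrm{id})(a\otimes b)=\partial_j a\otimes b$ in $L^2(M)^{\otimes 3}$, the product formula for inner products gives
\[
\langle \partial_j a\otimes b,\partial_j c\otimes d\rangle_{L^2(M)^{\otimes3}}=\langle\partial_j a,\partial_j c\rangle\langle b,d\rangle .
\]
Analogously, $\langle a\otimes\partial_j b,c\otimes\partial_j d\rangle=\langle a,c\rangle\langle\partial_j b,\partial_j d\rangle$. Summing over $j$ gives exactly the two terms above, and extending by sesquilinearity yields the identity on $\A\otimes\A^{op}$.

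The main point needing care is the bookkeeping of the identification $L^2(M^{op})\simeq L^2(M)$. The second leg carries the opposite structure, so one must check that $\mathrm{id}\otimes\partial_j$ on the $M^{op}$ leg corresponds to the ordinary $\partial_j$ under this identification, with the inner product on $L^2(M^{op})$ agreeing with $\tau(y^*x)$. This holds because $\tau$ is tracial, so the Hilbert space structure is unchanged and $\partial_j$ is applied to $b$ as an element of $\A$. The only other check is that $\Delta^{\otimes}$ agrees on the core with $\Delta\otimes\mathrm{id}+\mathrm{id}\otimes\Delta$ rather than merely with its closure; this is immediate since the closure extends the operator.
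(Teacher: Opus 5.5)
Your proposal is correct and follows essentially the same route as the paper: reduce to simple tensors $a\otimes b$, $c\otimes d$, and use that polynomials lie in $\Dom(\Delta)$ (the paper cites Voiculescu's Proposition~4.4 where you cite Lemma~\ref{in2deltaDelta}), so that $\Delta^{\otimes}(a\otimes b)=\Delta a\otimes b+a\otimes\Delta b$; then apply the Dirichlet identity on each leg and extend by sesquilinearity. Your remarks on the $M^{op}$ identification and on $\Delta^{\otimes}$ restricting to $\Delta\otimes\mathrm{id}+\mathrm{id}\otimes\Delta$ on the algebraic core address points the paper leaves implicit, and you handle them correctly.
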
 \begin{proof} By Voiculescu~\cite[Proposition~4.4]{V}, we have $M_0=\mathbb C \langle X_1,\ldots,X_n\rangle \subset \Dom(\Delta)$. Hence, on simple tensors $a\otimes b\in\A\otimes \A^{op}$, \[ \Delta^{\otimes}(a\otimes b)=\Delta a\otimes b\;+\;a\otimes \Delta b. \] Using the Dirichlet identity: \[ \langle \Delta a,\,c\rangle_{L^2(M)}=\sum_{j=1}^n \big\langle \partial_j a,\,\partial_j c\big\rangle_{L^2(M){\bar\otimes}\,L^2(M^{op})}. \] Take $U=a\otimes b$, $V=c\otimes d$. For the first leg, \begin{align*} \big\langle (\Delta a)\otimes b,\,c\otimes d\big\rangle_{L^2(M){\bar\otimes}\, L^2(M^{op})} &=\tau(c^\ast \Delta a)\,\tau(b d^\ast)\\ &=\sum_{j=1}^n \big\langle \partial_j a,\,\partial_j c\big\rangle_{L^2(M){\bar\otimes}\,L^2(M^{op})}\,\tau(b d^\ast)\\ &=\sum_{j=1}^n \big\langle (\partial_j a)\otimes b,\,(\partial_j c)\otimes d\big\rangle_{L^2(M)^{\otimes 3}}\\ &=\sum_{j=1}^n \big\langle (\partial_j\otimes \mathrm{id})U,\,(\partial_j\otimes \mathrm{id})V\big\rangle_{L^2(M)^{\otimes 3}}. \end{align*} For the second leg, \[ \big\langle a\otimes \Delta b,\,c\otimes d\big\rangle_{L^2(M){\bar\otimes}\, L^2(M^{op})} =\sum_{j=1}^n \big\langle (\mathrm{id}\otimes \partial_j)U,\,(\mathrm{id}\otimes \partial_j)V\big\rangle_{L^2(M)^{\otimes 3}}. \] Summing both terms, we then extended it by linearity to $\A\otimes \A^{op}$. \end{proof}
\qed 
\begin{flushleft}
    The following lemma shows that $\Dom(\Delta)$ embeds into a natural second--order Sobolev--type
domain $D\big(\overline{\partial\otimes \mathrm{id}\oplus\mathrm{id}\otimes \partial}\circ \bar\partial\big)$. This is in analogy with what happens in Malliavin (and free Malliavin) calculus~\cite{BS,Di2} where we have
the classical identity $\Dom(\mathcal{L})=\mathbb D^{2,2}$ for the Ornstein--Uhlenbeck operator $\mathcal{L}$.
Here we provide one inclusion, leaving the other one open and not investigated here and which is more or less equivalent to obtaining free variant of the so-called \emph{Meyer}'s inequalities (see e.g.  Nualart~\cite[Section~1.5]{Nualart} for details in the classical case).

\end{flushleft}

\begin{lemma}\label{lma5}
Assume $(M,\tau)$ satisfies the Lipschitz conjugate variables in the sense of Definition~\ref{def6}.

If $x\in \Dom(\Delta)$, then

\[
x\in \Dom(\overline{\partial\otimes \mathrm{id}\oplus\mathrm{id}\otimes \partial}\circ \bar\partial),
\qquad\text{equivalently}\qquad x\in \Dom(\Delta^{\otimes}\circ \bar\partial).
\]

\end{lemma}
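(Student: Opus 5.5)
The plan is an energy estimate built on the almost-commutation relation of Lemma~\ref{in2deltaDelta} (item (3)). Two features of the setting make it work: the correction term $\mathcal R_{\mathscr J\xi}$ is bounded, and Proposition~\ref{prop:tensor-dirichlet-Delta} identifies $\langle\Delta^{\otimes}U,U\rangle$ with the second-order Dirichlet energy
\[
\sum_j\|(\partial_j\otimes\mathrm{id})U\|^2+\|(\mathrm{id}\otimes\partial_j)U\|^2 .
\]
So proving $x\in\Dom\big(\overline{\partial\otimes\mathrm{id}\oplus\mathrm{id}\otimes\partial}\circ\bar\partial\big)$ amounts to showing $\bar\partial_i x\in\Dom\big((\Delta^{\otimes})^{1/2}\big)$ for each $i$, with
\[
\sum_i\|(\Delta^{\otimes})^{1/2}\bar\partial_i x\|^2\le C\big(\|\Delta x\|^2+\|x\|^2\big).
\]
Note that the form domain of $\Delta^{\otimes}$ coincides with the domain of the closure of $\partial\otimes\mathrm{id}\oplus\mathrm{id}\otimes\partial$. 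This holds because, by Proposition~\ref{prop:tensor-dirichlet-Delta}, the two quadratic forms agree on the core $\A\otimes\A^{op}$. Moreover, $\A\otimes\A^{op}$ is a form core for $\Delta^{\otimes}$, since $\A$ is a core for $\Delta^{1/2}$. For this reason I read the statement's ``equivalently $\Dom(\Delta^{\otimes}\circ\bar\partial)$'' at the form level.

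\textbf{Step 1 (smooth elements).} Take $x\in\Dom(\Delta^{3/2})$; these are dense in $\Dom(\Delta)$ for the graph norm, for instance $x=\eta_\alpha(y)$ iterated, or spectral cutoffs of $y$. For such $x$, Lemma~\ref{in2deltaDelta}(3) gives $\bar\partial_i x\in\Dom(\Delta^{\otimes})$ and
\[
\Delta^{\otimes}\bar\partial x=\bar\partial\Delta x-\mathcal R_{\mathscr J\xi}\bar\partial x .
\]
Pairing with $\bar\partial x$ yields
\[
\sum_i\langle\Delta^{\otimes}\bar\partial_i x,\bar\partial_i x\rangle=\langle\bar\partial\Delta x,\bar\partial x\rangle-\langle\mathcal R_{\mathscr J\xi}\bar\partial x,\bar\partial x\rangle .
\]
The first term equals $\langle\Delta x,\Delta x\rangle$ by integration by parts, since $\bar\partial x\in\Dom(\partial^*)$ with $\partial^*\bar\partial x=\Delta x$. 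The second term is bounded in absolute value by $\|\mathscr J\xi\|\,\langle\Delta x,x\rangle$, using Proposition~\ref{prop:S-T-rep-positivity-explicit}(2). Hence
\[
\|(\Delta^{\otimes})^{1/2}\bar\partial x\|^2\le\|\Delta x\|^2+\|\mathscr J\xi\|\,\|\Delta^{1/2}x\|^2\le C\|x\|_{\Dom(\Delta)}^2 .
\]
Under $CD(c,\infty)$ the curvature term even has a favourable sign, but only boundedness is needed here.

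\textbf{Step 2 (closure).} For general $x\in\Dom(\Delta)$, choose $x_k\in\Dom(\Delta^{3/2})$ with $x_k\to x$ in graph norm, e.g.\ $x_k=\eta_k(x)$, which commutes with $\Delta$. By the Step~1 estimate applied to $x_k-x_l$, the sequence $(\bar\partial x_k)$ is Cauchy in the form norm of $\Delta^{\otimes}\otimes I_n$, and it converges in $L^2$ to $\bar\partial x$. Since the form is closed, $\bar\partial x$ lies in the form domain, i.e.\ in the domain of the closed operator $\overline{\partial\otimes\mathrm{id}\oplus\mathrm{id}\otimes\partial}$. The same estimate passes to the limit.

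\textbf{Main obstacle.} The main difficulty is the identification in the first paragraph: that the closure of $\partial\otimes\mathrm{id}\oplus\mathrm{id}\otimes\partial$ defined on $\A\otimes\A^{op}$ has exactly the form domain of $\Delta^{\otimes}$ (and not merely a smaller one). Two facts are needed. First, this tensor derivation must be closable, which follows from Proposition~\ref{prop5}(3) and the existence of conjugate systems in each leg. Second, the polarized identity of Proposition~\ref{prop:tensor-dirichlet-Delta} must extend from the algebraic core to the closures. I would first show that $\Dom(\Delta)\odot\Dom(\Delta)$ lies in the domain of the closed tensor derivation and that the form identity holds there, by approximating each leg by polynomials in $\Dom(\bar\partial)$-norm. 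I would then use that this algebraic tensor product is a core for $\Delta^{\otimes}$, as recorded in Lemma~\ref{in2deltaDelta}, so that the form core statement follows.
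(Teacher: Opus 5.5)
Your proposal is correct and follows essentially the same route as the paper. You apply the almost-commutation relation of Lemma~\ref{in2deltaDelta}(3) to resolvent approximants $\eta_\alpha(x)\in\Dom(\Delta^{3/2})$, pair it with $\bar\partial\eta_\alpha(x)$ so that $\|\Delta\eta_\alpha(x)\|^2$ splits into the second-order tensor energy plus a curvature term bounded by $\|\mathscr J\xi\|\,\|\bar\partial\eta_\alpha(x)\|^2$, and pass to the limit by closedness. The differences are minor: you apply the estimate to $x_k-x_l$ to get a Cauchy sequence in the form norm, where the paper uses a uniform bound and weak compactness. You also explicitly justify that the form domain of $\Delta^{\otimes}$ equals $\Dom\big(\overline{\partial\otimes\mathrm{id}\oplus\mathrm{id}\otimes\partial}\big)$, and read ``$\Dom(\Delta^{\otimes}\circ\bar\partial)$'' at the form level, both of which the paper takes for granted.
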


\begin{proof}
Let $\xi_j=\partial_j^{\,*}(1\otimes 1)\in M$ be the Lipschitz conjugate variables.  
Define the Jacobian

\[
\mathscr{J}\xi=\big(\bar\partial_i\xi_j\big)_{i,j=1}^n\in M_n(M\bar\otimes M^{op}),
\]

and for $\eta=(\eta_1,\ldots,\eta_n)\in (L^2(M){\bar\otimes}\,L^2(M^{op}))^{\oplus_n}$, set
$\mathcal{R}_{\mathscr{J}\xi}\in B\big((L^2(M){\bar\otimes}\,L^2(M^{op}))^{\oplus_n}\big)$ by

\[
\big(\mathcal{R}_{\mathscr{J}\xi}\eta\big)_i=\sum_{j=1}^n \eta_j \sharp \bar\partial_i\xi_j.
\]

As recalled earlier (see Proposition~\ref{prop:S-T-rep-positivity-explicit}), $\mathcal{R}_{\mathscr{J}\xi}$ is bounded and

\[
\|\mathcal{R}_{\mathscr{J}\xi}\|\le \|\mathscr{J}\xi\|_{M_n(M\bar\otimes M^{op})}.
\]

\smallskip

Set the resolvent maps

\[
\eta_\alpha := \alpha(\alpha+\Delta)^{-1},
\qquad
{\eta}_\alpha^{\otimes} := \alpha(\alpha+\Delta^{\otimes})^{-1},
\]

where $\Delta^{\otimes}=\overline{\Delta\otimes\mathrm{id}+\mathrm{id}\otimes\Delta}$ on
$L^2(M)\,\bar\otimes\,L^2(M^{op})$. Then $\eta_\alpha$ and $\eta_\alpha^{\otimes}$ are unital, completely
positive, $L^2$–contractions, and

\[
\eta_\alpha\to\mathrm{id}\quad\text{on }L^2(M),
\qquad
\eta_\alpha^{\otimes}\to\mathrm{id}\quad\text{on }L^2(M)\,\bar\otimes\,L^2(M^{op}),
\]
strongly (terminology coming from seeing them respectively as operators on $B(L^2(M))$ and $B(L^2(M)\bar\otimes L^2(M^{op}))$) as $\alpha\to\infty$.
\bigbreak
We then extend $\eta_\alpha^{\otimes}$ diagonally to 
$(L^2(M)\,\bar\otimes\,L^2(M^{op}))^{\oplus_n}$, we then denote
$(\eta_\alpha^{\otimes})^{(n)}:=\eta_\alpha^{\otimes}\otimes I_n$ for this extension.

\bigbreak
Considering now $\eta_\alpha (x)$ for $x\in \Dom(\Delta^{1/2})=\Dom(\bar\partial)$.
Then immediate computations from spectral calculus gives $\eta_\alpha(x)\in \Dom(\Delta^{3/2})\subset \Dom(\bar\partial\circ\Delta)$, with moreover $\eta_\alpha(x)\to x$ in $L^2(M)$,
\bigbreak
In the same vein, we have immediately from spectral calculus, if moreover $x\in \Dom(\Delta)$,

\[
\Delta \eta_\alpha(x)
=
\alpha(x-\eta_\alpha(x))
\longrightarrow
\Delta x
\quad\text{in }L^2(M),
\]
\bigbreak
We now check convergence of $\bar\partial \eta_\alpha(x)\to \bar\partial x$
in $\big(L^2(M)\,\bar\otimes\,L^2(M^{op})\big)^{\oplus_n}$. 

The argument is a bit more involved and goes as follow. We invoke now a resolvent variant of Dabrowski's almost commutation formula which is  obtained in the following (see e.g. \cite[Lemma~2]{Dab14}), in our notation, we have for $x\in \Dom(\bar\partial)$:

\[
\bar\partial\,\eta_\alpha(x)
=
(\eta_\alpha^{\otimes})^{(n)}\,\bar\partial (x)
+
\tilde{\mathcal R}_\alpha(x),
\]

where $\tilde{\mathcal R}_\alpha: L^2(M)\to (L^2(M){\bar\otimes}\,L^2(M^{op}))^{\oplus_n}$ is a bounded operator with

\[
\tilde{\mathcal R}_\alpha
=
\frac{1}{\alpha}\,(\eta_\alpha^{\otimes})^{(n)}\,\mathcal R_{\mathscr J\xi}\big(\bar\partial\,\eta_\alpha\big), \quad\text{on}\: \Dom(\bar\partial)
\]

Indeed, for each $i=1,\ldots,n$, starting from the almost-commutation (since $\eta_{\alpha}(x)\in \Dom(\Delta^{3/2})\subset \Dom(\bar\partial \circ \Delta)$,
\[
\bar\partial_i\circ\Delta \eta_{\alpha}(x)
=
\Delta^{\otimes}\circ\bar\partial_i\eta_{\alpha}(x)
+(R_{\mathscr J\xi}\big(\bar\partial\,\eta_\alpha(x)\big))_i
\]
and applying $\eta_{\alpha}^{\otimes}$, we get

\[
\eta_\alpha^{\otimes}\bar\partial_i\circ \Delta \eta_\alpha(x)
=
\eta_\alpha^{\otimes}\circ \Delta^{\otimes}\,\bar\partial_i(\eta_{\alpha}(x))
+\eta_\alpha^{\otimes}\big(\big(\mathcal R_{\mathscr J\xi}(\bar\partial\,\eta_\alpha(x))\big)_i\big),
\]
now dividing by $\alpha>0$ and using the identities $\Delta\eta_\alpha=\alpha(\mathrm{id}-\eta_{\alpha})$ and its tensor resolvent $\Delta^{\otimes}\eta_\alpha^{\otimes}=\alpha(\mathrm{id}-\eta_{\alpha}^{\otimes})$,
\[
\eta_\alpha^{\otimes}\bar\partial_i(x)-\eta_\alpha^{\otimes}\bar\partial_i(\eta_\alpha(x))
=
\bar\partial_i\eta_\alpha^{\otimes}(x)-\eta_\alpha^{\otimes}\bar\partial_i(\eta_\alpha(x))
+\frac{1}{\alpha}\eta_\alpha^{\otimes}\big(\big(\mathcal R_{\mathscr J\xi}(\bar\partial\,\eta_\alpha(x))\big)_i\big),
\]
canceling identical terms in both sides, we immediately arrive at the conclusion for each coordinate and we write immediately in vector form using the diagonal extension of $\eta_\alpha^{\otimes}$ that we denoted above as $(\eta_\alpha^{\otimes})^{(n)}$.

\bigskip
Since $\eta_\alpha^{\otimes}$ is a contraction and $\mathcal R_{\mathscr J\xi}$ also, up to the
constant $C=\|\mathscr{J}\xi\|_{M_n(M\bar\otimes M^{op})}$, we have therefore

\[
\|\tilde{\mathcal R}_\alpha(x)\|_{\big(L^2(M)\,\bar\otimes\,L^2(M^{op})\big)^{\oplus_n}}
\le
\frac{C}{\alpha}\,\|\bar\partial\,\eta_\alpha(x)\|_{\big(L^2(M)\,\bar\otimes\,L^2(M^{op})\big)^{\oplus_n}}.
\]

Using the Dirichlet identity for $\Delta$ and the resolvent identity,

\[
\|\bar\partial\,\eta_\alpha(x)\|_{L^2(M)\bar\otimes L^2(M^{op})}^2
=
\langle \Delta\eta_\alpha(x),\,\eta_\alpha(x)\rangle_{L^2(M)}
=
\alpha\,\langle x-\eta_\alpha(x),\,\eta_\alpha(x)\rangle_{L^2(M)},
\]

we obtain

\[
\|\bar\partial\,\eta_\alpha(x)\|_{\big(L^2(M)\,\bar\otimes\,L^2(M^{op})\big)^{\oplus_n}}
\le
\alpha^{1/2}\,\|x-\eta_\alpha(x)\|_{L^2(M)}^{1/2}\,\|x\|_{L^2(M)}^{1/2}.
\]

Hence

\[
\frac{1}{\alpha}\,\|\bar\partial\,\eta_\alpha(x)\|_{\big(L^2(M)\,\bar\otimes\,L^2(M^{op})\big)^{\oplus_n}}
\le
\alpha^{-1/2}\,\|x-\eta_\alpha(x)\|_{L^2(M)}^{1/2}\,\|x\|_{L^2(M)}^{1/2}
\longrightarrow 0,
\]

since $\eta_\alpha(x)\to x$ in $L^2(M)$. It follows that

\[
\|\tilde{\mathcal R}_\alpha(x)\|_{\big(L^2(M)\,\bar\otimes\,L^2(M^{op})\big)^{\oplus_n}}
\longrightarrow 0.
\]

On the other hand, $\eta_\alpha^{\otimes}\to\mathrm{id}$ strongly on the tensor space, so its tensor extension also and then $(\eta_\alpha^{\otimes})^{(n)}\bar\partial x\to\bar\partial x$ in
$\big(L^2(M)\,\bar\otimes\,L^2(M^{op})\big)^{\oplus_n}$. Therefore

\[
\bar\partial \eta_{\alpha}(x)
=
(\eta_\alpha^{\otimes})^{(n)}\bar\partial (x) + \tilde{\mathcal R}_\alpha(x)
\longrightarrow
\bar\partial x
\quad\text{in }\big(L^2(M)\,\bar\otimes\,L^2(M^{op})\big)^{\oplus_n}.
\]

\bigskip

Finally, using the trivial bound (coming again from contractivity) $\|x-\eta_\alpha(x)\|_{L^2(M)}\le 2\|x\|_{L^2(M)}$, the Dirichlet identity gives the uniform estimate

\[
\|\bar\partial\,\eta_\alpha(x)\|_{\big(L^2(M)\,\bar\otimes\,L^2(M^{op})\big)^{\oplus_n}}^2 \le 2\alpha\,\|x\|_{L^2(M)}^2,
\qquad x\in \Dom(\bar\partial),
\]

and therefore

\[
\|\tilde{\mathcal R}_\alpha(x)\|
\le
C\sqrt{\frac{2}{\alpha}}\,\|x\|_{L^2(M)}.
\]

Thus $\tilde{\mathcal R}_\alpha$ is bounded in $L^2(M)$ on the dense domain $\Dom(\bar\partial)=\Dom(\Delta^{1/2})$, by density and linearity it extends uniquely to a bounded operator

\[
\tilde{\mathcal R}_\alpha : L^2(M)\longrightarrow \big(L^2(M)\,\bar\otimes\,L^2(M^{op})\big)^{\oplus_n},
\qquad
\|\tilde{\mathcal R}_\alpha\|\le C\sqrt{\tfrac{2}{\alpha}}.
\]

\smallskip

We are thus in position to apply the almost–commutation formula, and for each $i$, we have

\[
\bar\partial_i\Delta(\eta_\alpha(x))
=
\Delta^{\otimes}\,\bar\partial_i(\eta_\alpha(x))
+\sum_{j=1}^n \bar\partial_j(\eta_\alpha(x))\sharp \bar\partial_i\xi_j,
\]

or in vector form,

\[
\bar\partial\Delta(\eta_\alpha(x))
=
\Delta^{\otimes}\,\bar\partial(\eta_\alpha(x))
+\mathcal{R}_{\mathscr{J}\xi}\big(\bar\partial(\eta_\alpha(x))\big).
\]

\smallskip

Using the Dirichlet identity for $\Delta$, we obtain
\begin{align}\label{deltares}
\|\Delta \eta_\alpha(x)\|_{L^2(M)}^2
&=
\sum_{i=1}^n \big\langle \bar\partial_i\Delta(\eta_\alpha(x)),\,\bar\partial_i(\eta_\alpha(x))\big\rangle_{L^2(M){\bar\otimes}\,L^2(M^{op})}\nonumber\\
&=
\sum_{i=1}^n \big\langle \Delta^{\otimes}\,\bar\partial_i(\eta_{\alpha}(x)),\,\bar\partial_i(\eta_{\alpha}(x))\big\rangle_{L^2(M){\bar\otimes}\,L^2(M^{op})}\nonumber\\
&\quad+
\sum_{i=1}^n \big\langle (\mathcal{R}_{\mathscr{J}\xi}\bar\partial(\eta_{\alpha}(x)))_i,\,\bar\partial_i(\eta_{\alpha}(x))\big\rangle_{L^2(M){\bar\otimes}\,L^2(M^{op})}\nonumber\\
&=
\sum_{i,j=1}^n \big\|\overline{\partial_j\otimes \mathrm{id}}\,\bar\partial_i(\eta_{\alpha}(x))\big\|_{L^3(M)}^2
+\sum_{i,j=1}^n \big\|\overline{\mathrm{id}\otimes \partial_j}\,\bar\partial_i(\eta_{\alpha}(x))\big\|_{L^3(M)}^2\nonumber\\
&\quad+
\sum_{i=1}^n \big\langle (\mathcal{R}_{\mathscr{J}\xi}\bar\partial(\eta_{\alpha}(x)))_i,\,\bar\partial_i(\eta_{\alpha}(x))\big\rangle_{L^2(M){\bar\otimes}\,L^2(M^{op})}.
\end{align}
obtained again, using

\[
\Delta^{\otimes}
=
\big((\partial\otimes \mathrm{id})\oplus(\mathrm{id}\otimes \partial)\big)^{*}
\big((\partial\otimes \mathrm{id})\oplus(\mathrm{id}\otimes \partial)\big),
\]
The curvature term is bounded by

\[
\left|\sum_{i=1}^n \big\langle (\mathcal{R}_{\mathscr{J}\xi}\bar\partial(\eta_{\alpha}(x)))_i,\,\bar\partial_i(\eta_{\alpha}(x))\big\rangle_{L^2(M){\bar\otimes}\,L^2(M^{op})}\right|
\le \|\mathcal{R}_{\mathscr{J}\xi}\|\,\|\bar\partial \eta_{\alpha}(x)\|_{(L^2(M){\bar\otimes}\,L^2(M^{op}))^{\oplus_n}}^2.
\]

\smallskip

As $\alpha\to\infty$ in $L^2$, since $\mathcal R_{\mathscr J\xi}$ is bounded,

\[
\Delta \eta_{\alpha}(x)\to \Delta x,\qquad
\bar\partial \eta_{\alpha}(x)\to \bar\partial x,\qquad
\mathcal{R}_{\mathscr{J}\xi}\bar\partial(\eta_{\alpha}(x))\to \mathcal{R}_{\mathscr{J}\xi}\bar\partial(x).
\]

Hence, both the left–hand side of Equation~\ref{deltares}, namely 
$\|\Delta \eta_{\alpha}(x)\|_{L^2(M)}^{\,2}$, and the second term on the 
right–hand side (in both the second and third lines of the equality) 
converge as $\alpha\to\infty$, implying the following term

\[
\sum_{i,j=1}^n 
\big\|\overline{\partial_j\otimes \mathrm{id}}\,\bar\partial_i(\eta_{\alpha}(x))\big\|_{L^3(M)}^{2}
\;+\;
\sum_{i,j=1}^n 
\big\|\overline{\mathrm{id}\otimes \partial_j}\,\bar\partial_i(\eta_{\alpha}(x))\big\|_{L^3(M)}^{2},
\]

must also converge, and in particular is uniformly bounded in $\alpha$.
\newline
Because $\bar\partial\eta_\alpha(x)\to \bar\partial x$ in 
$\big(L^2(M)\,\bar\otimes\,L^2(M^{op})\big)^{\oplus_n}$ and the operator 
$\overline{\partial\otimes \mathrm{id}\oplus \mathrm{id}\otimes\partial}$ 
is closed, this uniform boundedness implies that the sequence 
$\overline{\partial\otimes \mathrm{id}\oplus \mathrm{id}\otimes\partial}\,
\bar\partial(\eta_\alpha(x))$ is bounded in $(L^2(M)\bar\otimes L^2(M)\bar\otimes L^2(M))^{\oplus_{2n^2}}$.  
By Banach--Alaoglu, we may extract a weakly convergent subsequence; and since 
the norms in \eqref{deltares} also converge, this weak convergence is in fact in $L^2$
(strong).  
By closedness, the limit $\bar\partial x$ therefore lies in the domain of 
$\overline{\partial\otimes \mathrm{id}\oplus \mathrm{id}\otimes\partial}$, 
which shows exactly the condition:

\[
x\in D\big(\overline{\partial\otimes \mathrm{id}\oplus\mathrm{id}\otimes \partial}\circ\bar\partial\big),
\]

which is then equivalent to \ $x\in \Dom(\Delta^{\otimes}\circ \bar\partial)$.
\end{proof}
\qed

\begin{definition}
For $Y\in \Dom({\Delta^{\otimes}}\circ \bar\partial):=\Dom(\overline{\Delta\otimes \mathrm{id}+\mathrm{id}\otimes \Delta}\circ \bar\partial)$, define the \emph{second-gradient energy} (by analogy with the Hessian term $\lVert \Hess (f)\rVert_2^2$ which does appear in the second Bakry-Emery $\Gamma_2$ operator)

\[
\mathcal E_2(Y) := \sum_{i=1}^n \big\langle \Delta^{\otimes}(\bar\partial_i Y), \bar\partial_i Y\big\rangle_{L^2(M){\bar\otimes}\,L^2(M^{op})}.
\]

On $\Dom(\Delta$) (by the previous Lemma~\ref{lma5}), this equals

\[
\mathcal E_2(Y)
=
\sum_{i,j=1}^n
\Big(
\|\overline{\partial_j\otimes\mathrm{id}}\, \bar\partial_i Y\|_{L^2(M)^{\otimes 3}}^2
+
\|\overline{\mathrm{id}\otimes\partial_j}\,\bar \partial_i Y\|_{L^2(M)^{\otimes 3}}^2
\Big),
\]

and by closability extends to all $Y\in \Dom(\Delta^{\otimes}\circ \bar \partial)$.
Using the conjugate variables $\xi_j=\partial_j^{\,*}(1\otimes 1)$, define the \emph{conjugate curvature contraction}

\[
C_\xi(Y) := \sum_{i,j=1}^n \big\langle \bar\partial_j Y\# \bar\partial_i \xi_j, \bar\partial_i Y\big\rangle_{L^2(M){\bar\otimes} L^2(M^{op})}.
\]

\end{definition}

\begin{flushleft}
We now turn to a key identity, strongly reminiscent of the Bakry--\'Emery calculus. 
It reveals a precise relation between the \emph{tensor Dirichlet} form and the \emph{curvature contraction}, 
and will play a central role in our rigidity argument.
\end{flushleft}

\begin{lemma}\label{lem:one-line-Delta}
If $Y\in \Dom(\Delta)$ and $\Delta Y=Y$, then

\[
\mathcal E_2(Y)= \mathcal{E}(Y)- C_\xi(Y).
\]

\end{lemma}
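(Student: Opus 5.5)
The plan is to pair the almost--commutation identity of Lemma~\ref{in2deltaDelta}(3) with $\bar\partial Y$ and use the eigen-relation to collapse the left-hand side into $\mathcal E(Y)$.

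\textbf{Regularity.} Since $\Delta Y=Y$, Lemma~\ref{regul} gives $Y\in\Dom(\Delta^{\alpha})$ for every $\alpha\ge0$. In particular $Y\in\Dom(\Delta^{3/2})$, so part (3) of Lemma~\ref{in2deltaDelta} applies to $x=Y$ and gives, for each $i$,
\[
\bar\partial_i\Delta Y=\Delta^{\otimes}\bar\partial_i Y+\sum_{j=1}^n\bar\partial_j Y\sharp\big(\bar\partial_i\xi_j\big),
\]
with $\bar\partial_i Y\in\Dom(\Delta^{\otimes})$. Moreover, Lemma~\ref{lma5} places $Y$ in $\Dom(\Delta^{\otimes}\circ\bar\partial)$, so $\mathcal E_2(Y)$ is defined and equals the sum of squared norms of the second-order quotients.

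\textbf{Pairing.} Substitute $\Delta Y=Y$ on the left-hand side to get $\bar\partial_iY=\Delta^{\otimes}\bar\partial_iY+(\mathcal R_{\mathscr J\xi}\bar\partial Y)_i$. Take the inner product with $\bar\partial_i Y$ in $L^2(M)\bar\otimes L^2(M^{op})$ and sum over $i$:
\[
\sum_i\|\bar\partial_iY\|^2=\sum_i\langle\Delta^{\otimes}\bar\partial_iY,\bar\partial_iY\rangle+\sum_{i,j}\langle\bar\partial_jY\sharp\bar\partial_i\xi_j,\bar\partial_iY\rangle .
\]
The left side is $\mathcal E(Y)$, the first term on the right is by definition $\mathcal E_2(Y)$, and the last is $C_\xi(Y)$. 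Rearranging gives $\mathcal E_2(Y)=\mathcal E(Y)-C_\xi(Y)$. All terms are finite because $\mathcal R_{\mathscr J\xi}$ is bounded (Proposition~\ref{prop:S-T-rep-positivity-explicit}) and $\bar\partial_iY\in\Dom(\Delta^{\otimes})$.

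\textbf{Main obstacle.} The only delicate point is bookkeeping. I must check that the index convention in $\mathcal R_{\mathscr J\xi}$, namely $(\mathcal R\eta)_i=\sum_j\eta_j\sharp\bar\partial_i\xi_j$, matches exactly the contraction $C_\xi$ in the definition. I must also check that $\Delta^{\otimes}\bar\partial_iY$ is the closure applied on its genuine domain, so that the pairing is a legitimate quadratic form rather than a formal identity. Both are supplied by Lemma~\ref{in2deltaDelta}(3) together with Lemma~\ref{lma5}.

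An alternative route, if one prefers to avoid the $\Dom(\Delta^{3/2})$ statement, is to run the computation \eqref{deltares} from the proof of Lemma~\ref{lma5}. There $\|\Delta Y\|^2=\mathcal E_2(Y)+C_\xi(Y)$, and since $\Delta Y=Y$ one has $\|\Delta Y\|^2=\langle\Delta Y,Y\rangle=\mathcal E(Y)$, which gives the same conclusion.
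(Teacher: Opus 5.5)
Your proposal is correct and is essentially the paper's proof. You get $Y\in\Dom(\Delta^{3/2})$ from Lemma~\ref{regul}, apply the almost--commutation identity of Lemma~\ref{in2deltaDelta}(3) with $\Delta Y=Y$ substituted, pair with $\bar\partial_i Y$, and sum over $i$. The appeal to Lemma~\ref{lma5} is unnecessary, and its ``equivalently'' confuses the form domain $\Dom((\Delta^{\otimes})^{1/2})$ with the operator domain $\Dom(\Delta^{\otimes})$. Lemma~\ref{in2deltaDelta}(3) alone already gives $\bar\partial_i Y\in\Dom(\Delta^{\otimes})$, which is all the pairing needs.
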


\begin{proof}
Apply the almost-commutation relation to $Y$ (which lies in $\Dom(\Delta^{3/2})$ by Lemma~\ref{regul} under $\Delta Y=Y$):

\[
\bar\partial_i\,\Delta(Y)= \Delta^{\otimes}(\bar\partial_i Y) + \sum_{k=1}^n \bar\partial_j Y \# \bar\partial_i \xi_j.
\]

Since $\Delta Y=Y$,

\[
\bar\partial_i Y = \Delta^{\otimes}(\bar\partial_i Y) + \sum_{j=1}^n \bar\partial_j Y \# \bar\partial_i \xi_j.
\]

Take the inner product with $\bar\partial_i Y$ and sum over $i$:

\[
\sum_{i=1}^n \|\bar\partial_i Y\|^2_{L^2(M){\bar\otimes}\,L^2(M^{op})}
=\sum_{i=1}^n \langle \Delta^{\otimes}(\bar\partial_i Y),\bar\partial_i Y\rangle_{L^2(M){\bar\otimes}\,L^2(M^{op})}
+\sum_{i,j=1}^n \langle \bar\partial_j Y\# \bar\partial_i \xi_j,\bar\partial_i Y\rangle_{L^2(M){\bar\otimes}\,L^2(M^{op})},
\]

i.e., $\mathcal E_2(Y)=\mathcal{E}(Y)-C_\xi(Y)$.
\end{proof}
\qed
\begin{remark}
    We could derive also the Bracsamp-Lieb inequality using the almost-commutation as well as taking inner products on both sides, the conclusion follows again from using the Poisson equation and a Cauchy-Scharwtz inequality.
\end{remark}
\begin{proposition}[Rigidity for $\Delta$]\label{prop:Szero-Delta}
Assume the $CD(1,\infty)$ curvature criterion.
If $Y\in \Dom(\bar{\partial})\cap L^2_0(M)$ is centered and saturates the free Poincar\'e inequality: $
\|Y\|_{L^2(M)}^2=\mathcal{E}(Y)$, then

\[
\mathcal E_2(Y)=0.
\]

Assume moreover that

\[
Y\in D\!\big(\overline{{\partial}\otimes \mathrm{id}\;\oplus\;\mathrm{id}\otimes{\partial}}\circ\bar\partial \big).
\]

Then for all $i,j=1,\ldots,n$,

\[
\overline{\partial_j\otimes \mathrm{id}}\,\bar\partial_i Y=0,
\qquad
\overline{\mathrm{id}\otimes \partial_j}\,\bar\partial_i Y=0.
\]

\end{proposition}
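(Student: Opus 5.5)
By Proposition~\ref{prop1}, saturation gives $Y\in\Dom(\Delta)$ with $\Delta Y=Y$. Lemma~\ref{regul} then puts $Y$ in $\Dom(\Delta^{3/2})$, and Lemma~\ref{lma5} puts it in $\Dom(\Delta^{\otimes}\circ\bar\partial)$. So $\mathcal E_2(Y)$ is well defined, and Lemma~\ref{lem:one-line-Delta} applies:
\[
\mathcal E_2(Y)=\mathcal E(Y)-C_\xi(Y).
\]
The plan is to squeeze $\mathcal E_2(Y)$ between $0$ and a nonpositive quantity.

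For the upper bound, I rewrite the curvature contraction through the right-leg action:
\[
C_\xi(Y)=\big\langle \mathcal R_{\mathscr J\xi}\,\bar\partial Y,\bar\partial Y\big\rangle_{(L^2(M)\bar\otimes L^2(M^{op}))^{\oplus_n}}.
\]
The index convention needs checking here. In $C_\xi$ the $i$-th component is $\sum_j\bar\partial_jY\sharp\bar\partial_i\xi_j$, which is $\mathcal R$ applied to the matrix with $(j,i)$ entry $\bar\partial_i\xi_j$. By the Schwarz symmetry and self-adjointness of Lemma~\ref{second}, this is $\mathscr J\xi$ or its transpose-type counterpart, and either one is $\ge (1\otimes1)\otimes I_n$ under $CD(1,\infty)$. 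The positivity item of Proposition~\ref{prop:S-T-rep-positivity-explicit} then gives
\[
\mathcal R_{\mathscr J\xi}\ge \mathrm{id}, \qquad\text{hence}\qquad C_\xi(Y)\ge\|\bar\partial Y\|^2=\mathcal E(Y).
\]
It follows that $\mathcal E_2(Y)\le 0$.

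For the lower bound, $\mathcal E_2(Y)=\sum_i\langle\Delta^{\otimes}\bar\partial_iY,\bar\partial_iY\rangle\ge0$ because $\Delta^{\otimes}$ is a positive self-adjoint operator. Therefore $\mathcal E_2(Y)=0$. Note that saturation only enters through the eigen-relation $\Delta Y=Y$; the hypothesis $\|Y\|^2=\mathcal E(Y)$ is otherwise not used.

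For the second assertion, I use the sum-of-squares form of $\mathcal E_2$ given in the Definition, which is valid on $\Dom(\Delta)$ via Lemma~\ref{lma5}. This form comes from the factorization
\[
\Delta^{\otimes}=\big((\partial\otimes\mathrm{id})\oplus(\mathrm{id}\otimes\partial)\big)^*\,\overline{(\partial\otimes\mathrm{id})\oplus(\mathrm{id}\otimes\partial)},
\]
which is Proposition~\ref{prop:tensor-dirichlet-Delta} extended by closure. Under the extra domain assumption, $\mathcal E_2(Y)=0$ becomes a vanishing sum of nonnegative terms $\|\overline{\partial_j\otimes\mathrm{id}}\,\bar\partial_iY\|^2+\|\overline{\mathrm{id}\otimes\partial_j}\,\bar\partial_iY\|^2$, so each term vanishes.

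The main obstacle is this last identification. One has to justify that the quadratic form of the closure $\Delta^{\otimes}$ on $\bar\partial_iY$ really equals the sum of squared norms of the closed second-order derivatives. I would handle it by passing to the limit in the identity of Proposition~\ref{prop:tensor-dirichlet-Delta} along approximations: resolvents $\eta_\alpha^{\otimes}\bar\partial_iY$, or elements of the core $\A\otimes\A^{op}$ converging in the graph norm of $\overline{\partial\otimes\mathrm{id}\oplus\mathrm{id}\otimes\partial}$. The passage uses the standard fact that the form domain of $T^*\bar T$ is $\Dom(\bar T)$, with the quadratic form equal to $\|\bar T u\|^2$. Some care is also needed with the index order in $\mathcal R_{\mathscr J\xi}$ versus $C_\xi$, which the symmetries of Lemma~\ref{second} resolve.
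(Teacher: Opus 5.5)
Your proposal is correct and follows the same route as the paper: Lemma~\ref{lem:one-line-Delta} gives $\mathcal E_2(Y)=\mathcal E(Y)-C_\xi(Y)$, the bound $\mathcal R_{\mathscr J\xi}\ge \mathrm{id}$ gives $C_\xi(Y)\ge\mathcal E(Y)$, positivity of $\Delta^{\otimes}$ gives $\mathcal E_2(Y)\ge 0$, and the sum-of-squares form of $\mathcal E_2$ finishes the argument. Your handling of domains, and of the identification $\Delta^{\otimes}=\big((\partial\otimes\mathrm{id})\oplus(\mathrm{id}\otimes\partial)\big)^\ast\,\overline{(\partial\otimes\mathrm{id})\oplus(\mathrm{id}\otimes\partial)}$ through the core $\A\otimes\A^{op}$, is more careful than the paper's.

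One small correction on the index convention. With $(\mathscr J\xi)_{ij}=\bar\partial_j\xi_i$ and $(\mathcal R_T\eta)_i=\sum_j\eta_j\sharp T_{ji}$, the matrix appearing in $C_\xi$ is exactly $\mathscr J\xi$. You should therefore drop the \emph{or its transpose} hedge. It would not be justified anyway, since the $M_n$-transpose need not preserve positivity over the noncommutative algebra $M\bar\otimes M^{op}$.
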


\begin{proof}
By Lemma~\ref{lem:one-line-Delta}, we have the identity

\[
\mathcal E_2(Y)=\mathcal{E}(Y)-C_\xi(Y).
\]

Under the $CD(1,\infty)$ curvature criterion, the contraction term satisfies

\[
C_\xi(Y)\;\ge\;\mathcal{E}(Y),
\]

by the right-leg comparison estimate. Hence $\mathcal E_2(Y)\le 0$.

Since $\mathcal E_2(Y)$ is a quadratic form and therefore nonnegative, we conclude

\[
\mathcal E_2(Y)=0.
\]

\smallskip

Now assume

\[
Y\in D\!\big(\overline{{\partial}\otimes \mathrm{id}\;\oplus\;\mathrm{id}\otimes{\partial}}\circ\bar\partial \big).
\]

By definition of $S(Y)$ and its explicit expression

\[
 \mathcal E_2(Y)
=
\sum_{i,j=1}^n
\Big(
\|\,\overline{\partial_j\otimes\mathrm{id}}\,\bar\partial_i Y\|_{L^2(M)^{\otimes 3}}^2
+
\|\,\overline{\mathrm{id}\otimes\partial_j}\,\bar\partial_i Y\|_{L^2(M)^{\otimes 3}}^2
\Big),
\]

the equality $S(Y)=0$ forces each term to vanish. Therefore, for all $i,j$,

\[
\overline{\partial_j\otimes \mathrm{id}}\,\bar\partial_i Y=0,
\qquad
\overline{\mathrm{id}\otimes \partial_j}\,\bar\partial_i Y=0.
\]

The second identity is in fact immediate from the coassociativity of free difference quotients
(see Remark~\ref{coass}).
\end{proof}
\qed
\begin{flushleft}
Our next goal is to show that saturation of the free Poincaré inequality forces extremizers to be \emph{affine} in the variables \(X_1,\ldots,X_n\). This is exactly what happens in the canonical semicircular model, where the generator \(\Delta=-\mathcal{L}_{V_0}\), with \(V_0=\tfrac12\sum_{j=1}^n X_j^2\), is the Ornstein--Uhlenbeck operator (the minus sign follows the usual Malliavin–calculus convention). In that case \(\Delta X_i=X_i\), and the full spectral decomposition is given by the orthogonal basis of Chebyshev polynomials of the second kind (see Biane--Speicher~\cite{BS} or~\cite{Di2}). This explicit structure is fundamental, for instance, in free Malliavin calculus.

In the general setting, however, and contrary to~\cite[Proposition~3]{Di3}, where higher--order free Poincaré inequalities are available (for now) only in the semicircular case, the affine conclusion is not immediate: higher--order free Poincaré inequalities are not yet known in full generality and only provide first--order control. Establishing such inequalities remains an open and interesting problem.
\end{flushleft}

\begin{flushleft}
To bypass this difficulty, we use a key reduction step in the next lemma: we move the problem to the bimodule setting via a \emph{slicing} argument and conclude again using the free Poincaré inequality. This idea is loosely inspired by a technique of Dabrowski~\cite[Proof of Lemma~19(ii)]{Dab14}, where the boundedness of the composition $(\mathrm{id}\otimes\tau)\circ\bar\partial_i$ is used to show stability of $D(\bar\partial)$ under slicing by $\mathrm{id}\otimes \tau$. Here we extend this approach to a family of separating slice maps. Since we only work with norms and inner products, we avoid passing to the opposite module by fixing the canonical identification $L^2(M^{op})\simeq L^2(M)$.
\end{flushleft}

\begin{lemma}\label{lem:kernel-scalar-L2}
Let $M=(W^*(X_1,\ldots,X_n),\tau)$ be a tracial $W^\ast$--probability space. Let $\bar{\partial}:=(\bar{\partial}_1,\ldots,\bar{\partial}_n)$
denote the closed free difference quotients of the self-adjoint variables $X=(X_1,\ldots,X_n)$.

Let

\[
\bar\partial_j:\Dom(\bar\partial_j)\subset L^2(M)\ \longrightarrow\ L^2(M)\,\bar\otimes\,L^2(M).
\]

Since each $\bar\partial_j$ is closed, the maps

\[
\overline{\partial_j\otimes \mathrm{id}},\qquad \overline{\mathrm{id}\otimes \partial_j}
\]

are also densely defined (i.e. $\mathbb{C}\langle X_1,\ldots,X_n\rangle \otimes \mathbb{C}\langle X_1,\ldots,X_n\rangle$ being a core) closed operators on $L^2(M)\,\bar\otimes\,L^2(M)$, obtained as the closures on the polynomial core.

Suppose $U\in \Dom(\overline{\partial_j\otimes \mathrm{id}\oplus\mathrm{id}\otimes \partial_j})$ for all $j=1,\ldots,n$, and satisfies

\[
\overline{\partial_j\otimes \mathrm{id}}(U)=0
\quad\text{and}\quad
\overline{\mathrm{id}\otimes \partial_j}(U)=0.
\]

Then $U=c\,(1\otimes 1)$ for some $c\in\C$.
\end{lemma}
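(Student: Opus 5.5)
The strategy is to reduce the statement to the one-variable fact $\ker\bar\partial=\C1$, which follows from the free Poincaré inequality: if $\bar\partial y=0$ then $\|y-\tau(y)1\|_{L^2(M)}^2\le C_P\cdot 0$. The reduction uses slice maps. For $\phi\in M$ (or more generally in a dense subset of $L^2(M)$), define the bounded slice map
\[
S_\phi:=\mathrm{id}\otimes\langle\cdot,\phi^\ast\rangle : L^2(M)\bar\otimes L^2(M)\to L^2(M),\qquad a\otimes b\mapsto \tau(b\phi)\,a,
\]
and the analogous left slice $S'_\phi: a\otimes b\mapsto \tau(a\phi)\,b$. Both are bounded, with norm at most $\|\phi\|_{L^2(M)}$. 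The family $\{S_\phi\}_{\phi\in M}$ is separating: if $S_\phi U=0$ for all $\phi$, then $U=0$, since $\langle S_\phi U, x\rangle=\langle U, x\otimes\phi^\ast\rangle$ and such simple tensors are total.

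The key step is to show that slicing commutes with the closed partial derivations. First take $V\in\A\otimes\A$. There one checks $\partial_j(S_\phi V)=(S_\phi\otimes\cdot)\big((\partial_j\otimes\mathrm{id})V\big)$, where the slice is applied to the third leg via the bounded map $\mathrm{id}\otimes\mathrm{id}\otimes\langle\cdot,\phi^\ast\rangle$ from $L^2(M)^{\bar\otimes3}$ to $L^2(M)^{\bar\otimes2}$. Now take $U$ in the domain of $\overline{\partial_j\otimes\mathrm{id}}$, and choose $V_k\in\A\otimes\A$ with $V_k\to U$ and $(\partial_j\otimes\mathrm{id})V_k\to\overline{\partial_j\otimes\mathrm{id}}\,U$. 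Boundedness of both slices gives $S_\phi V_k\to S_\phi U$ and $\partial_j S_\phi V_k\to (\mathrm{id}^{\otimes2}\otimes\langle\cdot,\phi^\ast\rangle)\overline{\partial_j\otimes\mathrm{id}}\,U=0$. By closedness of $\bar\partial_j$, $S_\phi U\in\Dom(\bar\partial_j)$ and $\bar\partial_j S_\phi U=0$. Doing this for all $j$ gives $S_\phi U\in\ker\bar\partial=\C1$, so $S_\phi U=\lambda(\phi)1$. Here $\lambda(\phi)=\tau(S_\phi U)=\langle U,1\otimes\phi^\ast\rangle$, which is a bounded linear functional of $\phi$ and is therefore represented by some $w\in L^2(M)$: $\lambda(\phi)=\tau(w\phi)$.

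Consequently $S_\phi(U-1\otimes w)=0$ for every $\phi$, and by the separating property $U=1\otimes w$. Applying the same argument with the left slices $S'_\phi$ and the hypothesis $\overline{\mathrm{id}\otimes\partial_j}\,U=0$ gives $U=w'\otimes1$. The identity $1\otimes w=w'\otimes1$ in $L^2(M)\bar\otimes L^2(M)$ forces $w=c1$ and $w'=c1$ for a scalar $c$: slice both sides by $\tau$ in one leg. This yields $U=c(1\otimes1)$.

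The main obstacle is the commutation of slicing with the closures, in particular identifying the domain and the core for $\overline{\partial_j\otimes\mathrm{id}}$ and keeping track of the leg conventions (op-structure, and the $\dagger$ versus the plain identification $L^2(M^{op})\simeq L^2(M)$). I expect this to be handled by checking the intertwining on simple polynomial tensors and then passing to the limit by boundedness of slice maps together with closedness of $\bar\partial_j$. Note that only the closability of $\partial_j$ and the universal free Poincaré inequality are used; no curvature assumption is needed.
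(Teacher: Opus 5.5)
Your proposal is correct and follows the paper's proof essentially step for step. You slice the right leg with $\mathrm{id}\otimes\omega_\phi$, transfer $\overline{\partial_j\otimes\mathrm{id}}\,U=0$ to $\bar\partial_j(S_\phi U)=0$ by closedness, use the free Poincar\'e inequality to get $S_\phi U\in\C1$, and conclude $U=1\otimes w$. The paper phrases that last deduction through the Hilbert--Schmidt picture and a rank-one argument, whereas you use a separating family of slices.

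The only real difference is the final step. The paper applies $\overline{\mathrm{id}\otimes\partial_j}$ directly to $1\otimes b_0$ and invokes Poincar\'e once more. You instead rerun the slicing argument on the left leg and compare $1\otimes w=w'\otimes1$. Your version makes explicit the left-slice closedness argument on which the paper's unstated identity $\overline{\mathrm{id}\otimes\partial_j}(1\otimes b_0)=1\otimes\bar\partial_j b_0$ tacitly relies.
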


\begin{proof}
We divide the proof into five steps:
\begin{enumerate}

 \item Step 1: For $b\in L^2(M)$ define $\omega_b:L^2(M)\to\C$ by
$\omega_b(y):=\tau(y b)$.  
With the $L^2$ inner product $\langle y,z\rangle_{L^2(M)}=\tau(z^\ast y)$
and traciality, we have $\omega_b(y)=\langle y, b^\ast\rangle_{L^2(M)}$.
By Cauchy--Schwarz,

\[
  |\omega_b(y)|\ \le\ \|y\|_{L^2(M)}\,\|b\|_{L^2(M)},
\]

so $\omega_b$ is bounded with $\|\omega_b\|\le \|b\|_{L^2(M)}$.

Consequently, the slice map

\[
  T_b:=\mathrm{id}\otimes \omega_b:\ 
  L^2(M)\,\bar\otimes\,L^2(M)\ \longrightarrow\ L^2(M)
\]

is bounded, and $\|T_b\|\le \|b\|_{L^2(M)}$.

Let $M_0^{\otimes 2}:=\C\langle X_1,\ldots,X_n\rangle
\otimes \C\langle X_1,\ldots,X_n\rangle$ denote the algebraic tensor product of non-commutative polynomials in the $X_i$.
Since $\omega_b$ is linear and $\mathrm{id}\otimes\omega_b$ acts
componentwise, we have in particular that $T_b$ send tensor product of non-commutative polynomials onto non-commutative polynomials, i.e. $T_b(M_0^{\otimes 2})\subset M_0$.

  \item Step 2: On (algebraic) tensor product of non-commutative polynomials, i.e. for  $Q\in\mathbb{C}\langle X_1,\ldots,X_n\rangle\otimes\mathbb{C}\langle X_1,\ldots,X_n\rangle$, a direct computation shows

\[
  \partial_j\big(T_b Q\big) = (\mathrm{id}\otimes \mathrm{id}\otimes \omega_b)\big((\partial_j\otimes \mathrm{id})Q\big).
  \]

  Fix now $j=1,\ldots,n$ and  suppose $U\in \Dom(\overline{\partial_j\otimes \mathrm{id}})$.  We 
  choose a sequence $U_n\in  \mathbb C\langle X_1,\ldots,X_n\rangle\otimes \mathbb C\langle X_1,\ldots,X_n\rangle$ converging to $U\in \Dom(\overline{\partial_j\otimes \mathrm{id}})$, i.e. such that $U_n\to U$ in $L^2(M)\,\bar\otimes\,L^2(M)$ and  
  $(\partial_j\otimes \mathrm{id})U_n\to W:=\overline{\partial_j\otimes \mathrm{id}}(U)$ in $L^2(M)\,\bar\otimes\,L^2(M)\,\bar\otimes\,L^2(M)$.  
  Since $T_b$ and $\mathrm{id}\otimes\mathrm{id}\otimes \omega_b$ are bounded, we get

\[
  T_b U_n \to T_b U,\qquad
  \partial_j(T_bU_n)=(\mathrm{id}\otimes\mathrm{id}\otimes \omega_b)\big((\partial_j\otimes \mathrm{id})U_n\big)\to (\mathrm{id}\otimes\mathrm{id}\otimes \omega_b)(W).
  \]

  Using closability of $\bar\partial_j$, we conclude

\[
  T_b U\in \Dom(\bar\partial_j),\qquad
  \bar\partial_j(T_b U)\ =\ (\mathrm{id}\otimes\mathrm{id}\otimes \omega_b)\big((\overline{\partial_j\otimes \mathrm{id}})U\big).
  \]

  In particular, if $(\overline{\partial_j\otimes \mathrm{id}})U=0$, then $\bar\partial_j(T_b U)=0$ for all $j$.

  \item Step 3: Applying the free Poincaré inequality to $T_b U\in \Dom(\bar\partial)$ yields

\[
  \|T_b U-\tau(T_b U).1\|_{L^2(M)}^2\ \le\ C \sum_{j=1}^n \|\bar\partial_j(T_b U)\|_{L^2(M)\,\bar\otimes\,L^2(M)}^2\ =\ 0,
  \]

  hence

\[
  T_b U\ =\ \tau(T_b U).1\qquad\text{for all }b\in L^2(M).
  \]

    \item Step 4:
  By Step~3, for each $b\in L^2(M)$ there exists a scalar $\lambda(b):=\tau(T_bU)\in\C$ such that

\[
    T_b U = \lambda(b).1.
  \]

  We now interpret this condition via the Hilbert--Schmidt structure.
  With the canonical identification
$L^2(M)\bar\otimes L^2(M) \simeq HS(L^2(M))$
coming from an isometric isomorphism (as recalled in the Preliminaries~\ref{HS})
   $ \Psi: L^2(M)\bar\otimes L^2(M) \rightarrow HS(L^2(M))$, we let $T:=\Psi(U)\in HS(L^2(M))$ denote the Hilbert--Schmidt operator associated to $U$.

  For a simple tensor $a\otimes c\in M\otimes M$ and $b\in L^2(M)$, we have $T_b(a\otimes c)
    = a\,\omega_b(c)
    = a\,\tau(cb)
    = \Psi(a\otimes c)(b)$

  By linearity and continuity (both maps coincide on the algebraic tensor product and both operators are bounded), this identity extends to all $U\in L^2(M)\,\bar\otimes\,L^2(M)$, in particular:

\[
    T_b U = T(b)\qquad\text{for all }b\in L^2(M).
  \]

  Combining this with $T_b U=\lambda(b).1$ from above, we obtain
\[
    T(b) = \lambda(b).1\qquad\text{for all }b\in L^2(M),
  \]

  so that the Hilbert–Schmidt operator $T$
 has range contained in $\mathbb C1$
and therefore $T$ is a finite‑rank operator of rank one (unless trivial, i.e., equivalently $U=0$ and there is nothing to prove). Thus there exists a bounded linear functional $\varphi:L^2(M)\to\C$ such that $
    T(b) = \varphi(b).1$ {for all }$b\in L^2(M)$. Since $T\in HS(L^2(M))$, $\varphi$ is continuous, and by the Riesz representation theorem there exists $d\in L^2(M)$ such that

\[
    \varphi(b)=\langle b,d\rangle_{L^2(M)}=\tau(d^\ast b)\qquad\text{for all }b\in L^2(M).
  \]

  Setting $b_0:=d^\ast$, we get $\varphi(b)=\tau(b_0 b)$ and hence

\[
    T(b) = \tau(b_0 b).1\qquad\text{for all }b\in L^2(M).
  \]

  On the other hand, the tensor $1\otimes b_0$ corresponds under $\Psi$ to the operator

\[
    \Psi(1\otimes b_0)(b) = \tau(b_0 b).1,
  \]

  which coincides with $T(b)$ for all $b$. Therefore

\[
    T = \Psi(U) = \Psi(1\otimes b_0).
  \]

  Since $\Psi$ is an isometric isomorphism (injective in particular), it follows that

\[
    U = 1\otimes b_0.
  \]

  \item Step 5: From $\overline{\mathrm{id}\otimes \partial_j}(U)=0$ and $U=1\otimes b_0$, we obtain

\[
  1\otimes \bar\partial_j(b_0) =(\overline{\mathrm{id}\otimes \partial_j})(U)\ =\ 0,
  \]

  so $\bar\partial_j(b_0)=0$ for all $j$.  
  Applying the free Poincaré inequality to $b_0$ yields $b_0=\tau(b_0).1$, whence

\[
  U\ =\ \tau(b_0).(1\otimes 1),
  \]

  which is the desired conclusion.

\end{enumerate}
\end{proof}
\qed

\begin{flushleft}
    Let us prove now a nice consequence about such extremizers of the free Poincar\'e inequality under the Lipschitz condition and $CD(1,\infty)$ criterion: affine rigidity, which means that such extremizers are necessarily affine in the generators $X_1,\ldots,X_n$ which is a key to achieve our main theorem. 
\end{flushleft}

\begin{corollary}\label{affine}
Assume $M=(W^*(X_1,\ldots,X_n),\tau)$ admits \emph{Lipschitz conjugate variables} $(\xi_1,\dots,\xi_n)$ as in Definition~\ref{def6}, 
and that their Jacobian satisfies the $CD(1,\infty)$ criterion.

Let $f\in \Dom(\bar\partial)=\Dom(\Delta^{1/2})$ centered, and saturate the free Poincaré inequality (so that $\Delta f=f$). Then

\[
f=\sum_{j=1}^n c_j\,X_j+c_0. 1
\]

for some scalars $c_j\in\C$.
\end{corollary}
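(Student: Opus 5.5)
The plan is to chain the results already established. We argue in five steps.

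\emph{Step 1: eigenfunction and regularity.} By Proposition~\ref{prop1}, saturation gives $f\in\Dom(\Delta)$ and $\Delta f=f$. By Lemma~\ref{regul}, $f\in\Dom(\Delta^{3/2})$. By Lemma~\ref{lma5}, $f\in D\big(\overline{\partial\otimes\mathrm{id}\oplus\mathrm{id}\otimes\partial}\circ\bar\partial\big)$, which is exactly the extra hypothesis needed in Proposition~\ref{prop:Szero-Delta}.

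\emph{Step 2: second derivatives vanish.} Applying Proposition~\ref{prop:Szero-Delta}, we obtain, for every $i,j$,
\[
\overline{\partial_j\otimes\mathrm{id}}\,\bar\partial_i f=0,\qquad \overline{\mathrm{id}\otimes\partial_j}\,\bar\partial_i f=0 .
\]
The inequality $C_\xi(f)\ge\mathcal E(f)$ used there is $\langle\mathcal R_{\mathscr J\xi}\bar\partial f,\bar\partial f\rangle\ge\|\bar\partial f\|^2$. This follows from Proposition~\ref{prop:S-T-rep-positivity-explicit}(6) under $CD(1,\infty)$, after checking that the index convention in $C_\xi$ matches $\mathcal R_{\mathscr J\xi}$; this is harmless by the symmetry in Lemma~\ref{second}.

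\emph{Step 3: first derivatives are constant.} For each fixed $i$, apply Lemma~\ref{lem:kernel-scalar-L2} to $U=\bar\partial_i f\in L^2(M)\bar\otimes L^2(M)$. This gives constants $c_i\in\C$ with $\bar\partial_i f=c_i(1\otimes1)$.

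\emph{Step 4: reduce to a constant.} Set $g:=f-\sum_i c_iX_i$. Since $X_i\in M_0\subset\Dom(\bar\partial)$ and $\partial_jX_i=\delta_{ij}\,1\otimes1$, we get $g\in\Dom(\bar\partial)$ with $\bar\partial_j g=0$ for all $j$. The free Poincaré inequality (Voiculescu's, which holds universally) then gives $g=\tau(g)\cdot1$. Hence
\[
f=\sum_j c_jX_j+c_0\cdot1,\qquad c_0=\tau(g)=-\sum_j c_j\tau(X_j),
\]
where the last equality uses that $f$ is centered.

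\emph{Step 5: real coefficients.} If one wants real coefficients, apply Corollary~\ref{cor1} to reduce to $f=f^\ast$. Then $c_j\in\R$ follows from $\bar\partial_j(f^\ast)=(\bar\partial_jf)^\dagger$, which gives $\bar c_j=c_j$. This is not required by the statement, which allows $c_j\in\C$.

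The main obstacle is Step 2, specifically making the curvature comparison $C_\xi(f)\ge\mathcal E(f)$ rigorous. This requires matching the orientation of $\sharp$ and the indices $(j,i)$ versus $(i,j)$ in the almost-commutation formula with $\mathcal R_{\mathscr J\xi}$, so that positivity of $\mathscr J\xi-(1\otimes1)\otimes I_n$ in $M_n(M\bar\otimes M^{op})$ transfers. I would first verify this using Lemma~\ref{second}(2), which gives $(\mathscr J\xi)^\ast=\mathscr J\xi$ in the matrix sense, so that the transposed matrix $(\bar\partial_i\xi_j)_{ij}$ is the same positive element up to this symmetry.

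Everything else is a direct citation of earlier results. The only remaining check is that the domain hypothesis of Lemma~\ref{lem:kernel-scalar-L2} is met, and Lemma~\ref{lma5} provides exactly that.
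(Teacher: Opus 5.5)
Your proposal is correct and follows the paper's proof essentially step for step. Saturation gives $\Delta f=f$; Lemma~\ref{lem:one-line-Delta} together with the curvature bound forces $\mathcal E_2(f)=0$, so all second free difference quotients of $f$ vanish; Lemma~\ref{lem:kernel-scalar-L2} then gives $\bar\partial_i f=c_i(1\otimes 1)$; and the free Poincar\'e inequality applied to $f-\sum_j c_jX_j$ finishes.

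The index check you single out is immediate: since $(\mathscr J\xi)_{ji}=\bar\partial_i\xi_j$, one has $C_\xi(Y)=\langle\mathcal R_{\mathscr J\xi}\bar\partial Y,\bar\partial Y\rangle$ verbatim. Do not lean on your fallback via Lemma~\ref{second}, though. Transposing a positive matrix over the non-commutative algebra $M\bar\otimes M^{op}$ need not preserve positivity, so that symmetry would not rescue a genuine index mismatch.
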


\begin{proof}
By Lemma~\ref{lem:one-line-Delta} and the curvature bound we obtain $S(f)=0$, hence

\[
\overline{\partial_j\otimes \mathrm{id}}\:\bar\partial_i f=0, \qquad \overline{\mathrm{id}\otimes \partial_j}\:\bar \partial_i f=0
\]

for all $i,j=1,\ldots,n$. 
\newline
Now, Lemma~\ref{lem:kernel-scalar-L2} then yields $\bar\partial_i f=c_i.(1\otimes 1)$. 

Define $g:=f-\sum_{j=1}^n c_j X_j$. Since $\partial_i X_j=\delta_{ik}(1\otimes 1)$, we have

\[
\bar\partial_i g=\bar\partial_i f-c_i\,\partial_i X_i=c_i(1\otimes 1)-c_i(1\otimes 1)=0
\]

for all $i$. By the free Poincar\'e inequality, $g=c_0.1$ for some $c_0\in\C$, which yield the desired conclusion.
\end{proof}
\qed
\begin{remark}
As noted in Proposition~\ref{prop5}, the second-order difference quotients $(\partial_i\otimes \mathrm{id})\circ \partial_j$ are densely defined and closable, since second-order conjugate variables exist. This means that we could also consider $\overline{(\partial_i\otimes \mathrm{id})\circ \partial_j}$ directly without distinction.
\end{remark}

\begin{lemma}
\label{lem:affine-conjugates-semicircle}
Under the assumptions of Corollary~\ref{affine}, assume in addition that $f$ is self-adjoint.
Set $X_j^\circ:=X_j-\tau(X_j).1$ and $\sigma:=\|f\|_{L^2(M)}$.
Then:
\begin{enumerate}
\item One has

\[
  f=\sum_{j=1}^n c_j\,X_j^\circ,
\]

with $c_j\in\R$ and $\sum_{j=1}^n c_j^2=\sigma^2$.

\item For every $g\in\A$,

\[
  \langle f,g\rangle_{L^2(M)}
  =\sum_{j=1}^n \langle \partial_j f,\partial_j g\rangle_{L^2(M)\bar\otimes\,L^2(M^{op})}
  =\sum_{j=1}^n c_j\,\langle 1\otimes 1,\partial_j g\rangle_{L^2(M)\,\bar\otimes\,L^2(M^{op})}
  =\sum_{j=1}^n c_j\,\langle \xi_j,g\rangle_{L^2(M)},
\]

hence $f=\sum_{j=1}^n c_j\,\xi_j$.

\item Define the derivation (into $L^2(M)\,\bar\otimes\,L^2(M^{op})$)

\[
  \delta_f := \sum_{j=1}^n \frac{c_j}{\sigma}\,\partial_j
\]

Then, it is a closable derivation with $\widehat{f}:=\frac {f}{\sigma}=\delta_f^\ast(1\otimes 1)
  =\sum_{j=1}^n \frac{c_j}{\sigma}\,\xi_j$ with moreover $\delta_f(\widehat f)=1\otimes 1$.

\item The element $\widehat{f}$ has  the standard semicircular distribution.
\end{enumerate}
\end{lemma}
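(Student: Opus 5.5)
The plan is to read off all four items from Corollary~\ref{affine} and the Euler--Lagrange identity of Proposition~\ref{prop1}, and then to recognise the Schwinger--Dyson equation of the semicircle law in one variable.

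\emph{Item (1).} By Corollary~\ref{affine}, $f=\sum_j c_jX_j+c_0.1$. Since $\tau(f)=0$, we get $c_0=-\sum_j c_j\tau(X_j)$, that is, $f=\sum_j c_jX_j^\circ$. For reality of the coefficients, compare $f$ with $f^\ast=\sum_j\bar c_jX_j^\circ$. This gives $\sum_j(c_j-\bar c_j)X_j^\circ=0$. Algebraic freeness of the $X_j$ makes $1,X_1,\dots,X_n$ linearly independent, so $c_j\in\R$. For the norm, saturation gives $\sigma^2=\|f\|_2^2=\mathcal E(f)$. Moreover $\partial_i f=c_i(1\otimes1)$ and $\|1\otimes1\|=1$, so $\mathcal E(f)=\sum_i c_i^2$.

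\emph{Item (2).} The proof of Proposition~\ref{prop1} gives $\langle f,h\rangle=\langle\bar\partial f,\bar\partial h\rangle$ for all centered $h\in\Dom(\bar\partial)$. A polynomial $g\in\A$ need not be centered, so apply the identity to $h=g-\tau(g).1$. Since $\partial_j1=0$ and $\tau(f)=0$, both sides are unchanged when $h$ is replaced by $g$. Inserting $\partial_jf=c_j(1\otimes1)$ and the defining relation $\langle 1\otimes1,\partial_jg\rangle=\langle\xi_j,g\rangle$ of conjugate variables (complex-conjugated form of \eqref{conj-rel}, the $c_j$ being real), we get $\langle f-\sum_jc_j\xi_j,g\rangle=0$ for all $g\in\A$. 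Density of $\A$ in $L^2(M)$ then gives $f=\sum_jc_j\xi_j$.

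\emph{Item (3).} $\delta_f$ is a finite real linear combination of the $\partial_j$, so it is a derivation on $\A$. Its adjoint contains $\sum_j\frac{c_j}{\sigma}\partial_j^\ast$ on $\bigcap_j\Dom(\partial_j^\ast)$. This set contains $M_0\otimes M_0^{op}$ (Voiculescu), so $\delta_f^\ast$ is densely defined and $\delta_f$ is closable. In particular $\delta_f^\ast(1\otimes1)=\sum_j\frac{c_j}{\sigma}\xi_j=f/\sigma=\widehat f$ by item (2). Since $\widehat f$ is a polynomial, $\delta_f(\widehat f)=\sum_{j,k}\frac{c_jc_k}{\sigma^2}\partial_jX_k=\sum_j\frac{c_j^2}{\sigma^2}(1\otimes1)=1\otimes1$ by item (1).

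\emph{Item (4).} This is the core step. For a one-variable polynomial $P$, $P(\widehat f)\in\A$. The Leibniz rule together with $\delta_f(\widehat f)=1\otimes1$ gives $\delta_f\big(P(\widehat f)\big)=(\partial P)(\widehat f)$, where $\partial P$ is the usual one-variable difference quotient $\frac{P(s)-P(t)}{s-t}$ evaluated at $\widehat f\otimes\widehat f$. Using item (3) and self-adjointness of $\widehat f$,
\[
\tau\big(\widehat f\,P(\widehat f)\big)=\langle \delta_f^\ast(1\otimes1),P(\widehat f)^\ast\rangle=(\tau\otimes\tau)\big((\partial P)(\widehat f)\big).
\]
Taking $P(t)=t^k$ yields the recursion $m_{k+1}=\sum_{a+b=k-1}m_am_b$ with $m_0=1$, $m_1=0$. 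This is the Catalan recursion for the moments of the standard semicircle law. The moments determine the distribution because $\widehat f$ is bounded, so $\widehat f$ is standard semicircular; the value $m_2=1$ is consistent with $\|\widehat f\|_2=1$.

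The main obstacle I expect is bookkeeping rather than substance. One must keep the linear/antilinear conventions of the inner products straight, and check that the conjugate relation passes to $\delta_f$ with real coefficients. Everything else is formal once items (1) and (2) are in place.
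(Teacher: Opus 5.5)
Your proposal is correct and follows essentially the same route as the paper: the affine form from Corollary~\ref{affine}, the Dirichlet identity combined with the conjugate relation to obtain $f=\sum_j c_j\xi_j$, and the Stein identity $\tau(\widehat f\,g)=\tau\otimes\tau(\delta_f g)$ applied to powers of $\widehat f$ to get the Catalan recursion. The differences are minor: you derive $c_j\in\R$ from linear independence of $1,X_1,\dots,X_n$ rather than from $\partial_i(f^\ast)=(\partial_i f)^\dagger$, you handle the centering of $g$ explicitly, and you justify closability of $\delta_f$ (its adjoint is densely defined on $M_0\otimes M_0^{op}$), which the paper only asserts.
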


\begin{proof}
\begin{enumerate}

  \item 
  From Corollary~\ref{affine}, for each $i=1,\ldots,n$ we have

\[
      \partial_i f = c_i.(1\otimes 1),
  \]

  with $c_i\in\C$. Since $\partial_i X_j^\circ=\delta_{ij}(1\otimes 1)$ and $f$ is centered,

\[
      0=\tau(f)=\sum_{j=1}^n c_j\tau(X_j)+c_0,
  \]

  so $c_0=-\sum_{j=1}^n c_j\tau(X_j)$ and therefore

\[
      f=\sum_{j=1}^n c_j\big(X_j-\tau(X_j).1\big)=\sum_{j=1}^n c_j X_j^\circ.
  \]

  Moreover,

\[
      \|f\|_{L^2(M)}^2
      =\sum_{j=1}^n \|\partial_j f\|_{L^2(M)\,\bar\otimes\,L^2(M^{op})}^2
      =\sum_{j=1}^n |c_j|^2,
  \]

  so $\sigma^2=\sum_{j=1}^n |c_j|^2$.  
  If $f=f^\ast$, then by the \emph{real} property of free difference quotients,

\[
      c_i.(1\otimes 1)=\partial_i f=\partial_i(f^\ast)
      =(\partial_i f)^\dagger=\overline{c_i}.(1\otimes 1),
  \]

  hence $c_i\in\R$ for all $i$.  
  This proves the first item.

  \item 
  Because $f$ saturates the free Poincaré inequality, we have $\Delta f=f$.  
  The Dirichlet identity gives, for any $g\in\A$,

\[
      \langle f,g\rangle_{L^2(M)}
      =\sum_{j=1}^n 
      \langle \partial_j f,\partial_j g\rangle_{L^2(M)\,\bar\otimes\,L^2(M^{op})}
      =\sum_{j=1}^n c_j\,\langle 1\otimes 1,\partial_j g\rangle_{L^2(M)\,\bar\otimes\,L^2(M^{op})}.
  \]

  Since Lipschitz-conjugate variables are bounded by Proposition \ref{prop5} $\xi_j=\partial_j^\ast(1\otimes 1)\in M$, this equals

\[
      \sum_{j=1}^n c_j\,\langle \xi_j,g\rangle_{L^2(M)}.
  \]

  Hence $f=\sum_{j=1}^n c_j\,\xi_j \in M$.  
  This proves the second point.

  \item 
  At this stage we have the two representations

\[
      f=\sum_{j=1}^n c_j X_j^\circ=\sum_{j=1}^n c_j \xi_j.
  \]

  Thus $f$ lies simultaneously in the span of the centered generators and in the span of the conjugate variables.  
  The crucial step is now to renormalize properly.

  \medskip
  We define

\[
      \widehat{f}:=\frac{f}{\sigma},\qquad 
      \delta_f:=\sum_{j=1}^n \frac{c_j}{\sigma}\,\partial_j.
  \]

  Then $\tau(\widehat f)=0$, $\|\widehat f\|_{L^2(M)}=1$, and $\Delta\widehat f=\widehat f$.

  A direct computation using $\partial_j f = c_j.(1\otimes 1)$ and $\sigma^2=\sum_j c_j^2$ shows

\[
      \delta_f(\widehat{f})
      =
      \sum_{j=1}^n \frac{c_j}{\sigma}\,\partial_j\Big(\frac{f}{\sigma}\Big)
      =
      \frac{1}{\sigma^2}\sum_{j=1}^n c_j\,\partial_j f
      =
      \frac{1}{\sigma^2}\sum_{j=1}^n c_j^2.\,(1\otimes 1)
      =
      1\otimes 1.
  \]

  \medskip
  From the second item, we know that

\[
      f=\sum_{j=1}^n c_j\,\xi_j,
      \qquad 
      \xi_j=\partial_j^\ast(1\otimes 1).
  \]

  Set

\[
      h:=\sum_{j=1}^n \frac{c_j}{\sigma}\,\xi_j \;\in\; M.
  \]

  For any $g\in\A$, using the definition of $\delta_f$ we compute:
  \begin{eqnarray*}
      \big\langle 1\otimes 1,\delta_f g\big\rangle_{L^2(M)\bar\otimes\,L^2(M^{op})}
      &=& \left\langle 1\otimes 1,\;
         \sum_{j=1}^n \frac{c_j}{\sigma}\partial_j g
         \right\rangle_{L^2(M\bar\otimes\,L^2(M^{op})} \\
      &=& \sum_{j=1}^n \frac{c_j}{\sigma}\,
         \big\langle 1\otimes 1,\;\partial_j g\big\rangle_{L^2(M)\bar\otimes\,L^2(M^{op})} \\
      &=& \sum_{j=1}^n \frac{c_j}{\sigma}\,
         \big\langle \xi_j,g\big\rangle_{L^2(M)} \\
      &=& \big\langle h,g\big\rangle_{L^2(M)}.
  \end{eqnarray*}

  Thus $1\otimes 1\in\mathrm{Dom}(\delta_f^\ast)$, and by definition of the adjoint,

\[
      \delta_f^\ast(1\otimes 1)=h
      =\sum_{j=1}^n \frac{c_j}{\sigma}\,\xi_j
      =\widehat{f}.
  \]

  \item 
  Finally, we prove the last point. 
  \medskip
  Since $\delta_f^\ast(1\otimes 1)=\widehat{f}$, the adjoint relation
  can be written purely in terms of the trace as follows: for every $g\in\A$,
  \begin{equation}\label{eq:stein-trace}
      \tau(\widehat{f}\,g)
      = 
      \tau\otimes\tau(\delta_f(g)).
  \end{equation}
  This is a Stein identity exactly analogous to the conjugate relation 
  \eqref{conj-rel} for the free difference quotients.

  Applying \eqref{eq:stein-trace} to the polynomials $g=\widehat{f}^m$, $m\ge 0$, we get
  \begin{equation}\label{eq:stein-moment}
      \tau(\widehat{f}^{m+1})
      = 
      \tau\otimes\tau(\delta_f(\widehat{f}^m)),
      \qquad m\ge 0.
  \end{equation}

  On the other hand, since $\delta_f$ is a derivation and $\delta_f(\widehat{f})=1\otimes 1$, the \emph{chain-rule} (or a simple induction) shows that for all $m\ge 1$,

\[
      \delta_f(\widehat{f}^m)
      \ =\ 
      \sum_{k=0}^{m-1} \widehat{f}^k\otimes \widehat{f}^{m-1-k}.
  \]

  Applying $\tau\otimes\tau$ yields

\[
      \tau\otimes\tau\big(\delta_f(\widehat{f}^m)\big)
       = 
      \sum_{k=0}^{m-1} \tau(\widehat{f}^k)\,\tau(\widehat{f}^{m-1-k}),
      \qquad m\ge 1.
  \]

  Combining this with \eqref{eq:stein-moment} we obtain the moment recursion

\[
      \tau(\widehat{f}^{m+1})
      \ =\ 
      \sum_{k=0}^{m-1} \tau(\widehat{f}^k)\,\tau(\widehat{f}^{\,m-1-k}),
      \qquad m\ge 1.
  \]

  By construction, $\tau(\widehat{f})=0$ and $\|\widehat{f}\|_2^2=\tau(\widehat{f}^2)=1$.  
  Thus the moments of $\widehat{f}$ satisfy the standard Catalan recursion which uniquely characterizes the centered semicircular
  distribution of variance $1$. Hence $\widehat{f}$ is semicircular.

\end{enumerate}
\end{proof}
\qed

\begin{flushleft}
We can now state our main theorem which shows that the saturation of the free Poincaré inequality enforces a semicircular direction as well as a free complementation inside the algebra.     
\end{flushleft}

\begin{theorem}[Free Obata rigidity theorem]\label{thm:free-product-lipschitz}
Let $(M,\tau)$ be a tracial $W^\ast$-probability space generated by a self-adjoint
$n$-tuple $X=(X_1,\ldots,X_n)$, and assume that $X$ admits Lipschitz conjugate
variables (Definition~\ref{def6}) and satisfies the curvature-dimension bound
$CD(1,\infty)$.

Suppose there exists a non-zero centered $f\in D(\Delta^{1/2})$, self-adjoint,
saturating the free Poincaré inequality

\[
  \|f\|_{L^2(M)}^2=\mathcal{E}(f).
\]

Then the following hold:

\begin{enumerate}

  \item There exists $U\in O(n)$ such that, writing $X_j^\circ:=X_j-\tau(X_j).1$,

\[
      Y_i:=\sum_{j=1}^n U_{ij}\,X_j^\circ,\qquad i=1,\ldots,n,
  \]

  we have

\[
      f=\|f\|_{L^2(M)}\,Y_1,
  \]

  i.e.\ $Y_1 = f/\|f\|_{L^2(M)}$.

  \item The variable $Y_1$ is a standard semicircular variable (centered of variance $1$).

  \item $Y_1$ is free from the family $(Y_2,\ldots,Y_n)$ under $\tau$.

  \item The von Neumann algebra generated by $X$ satisfies

\[
      W^\ast(X_1,\ldots,X_n)=W^\ast(Y_1,Y_2,\ldots,Y_n).
  \]

  \item Consequently, $(W^\ast(X_1,\ldots,X_n),\tau)$ admits a trace-preserving
  free product decomposition:

\[
      (W^\ast(X_1,\ldots,X_n),\tau)
      \cong
      (W^\ast(Y_1),\tau_{\mathrm{sc}})
      *
      (W^\ast(Y_2,\ldots,Y_n),\tau_N),
  \]

  where $\tau_{\mathrm{sc}}$ is the standard semicircular state on $W^\ast(Y_1)$
  and $\tau_N$ is the restriction of $\tau$ to $N=W^\ast(Y_2,\ldots,Y_n)$.
  In particular,

\[
      (W^\ast(X_1,\ldots,X_n),\tau)
      \cong
      L^\infty([-2,2],\mu_{\mathrm{sc}})
      *
      (W^\ast(Y_2,\ldots,Y_n),\tau_N).
  \]

\end{enumerate}

Moreover, $W^\ast(Y_1)$ is freely complemented in $W^\ast(X_1,\ldots,X_n)$, with
(free) complement $W^\ast(Y_2,\ldots,Y_n)$.
\end{theorem}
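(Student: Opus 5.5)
The plan is to read off the orthogonal change of variables from the affine form of $f$, transfer the free difference quotients to the rotated tuple, and recognize a Schwinger--Dyson relation characterizing a semicircular variable free from the rest.

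\textbf{Step 1: the rotation.} By Corollary~\ref{affine} and Lemma~\ref{lem:affine-conjugates-semicircle}(1), $f=\sum_j c_jX_j^\circ$ with $c\in\R^n$ and $|c|=\sigma=\|f\|_{L^2(M)}>0$. Complete the unit vector $c/\sigma$ to an orthonormal basis of $\R^n$, and let $U\in O(n)$ be the matrix whose first row is $c/\sigma$ and whose other rows are the remaining basis vectors. With $Y_i:=\sum_jU_{ij}X_j^\circ$ we get $Y_1=f/\sigma$, which is item (1).

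Item (2) is exactly Lemma~\ref{lem:affine-conjugates-semicircle}(4). Item (4) follows because $X_j^\circ=\sum_i U_{ij}Y_i$ and $X_j=X_j^\circ+\tau(X_j)1$. So the $Y_i$ and the $X_j$ generate the same $\ast$-algebra $M_0$, hence the same von Neumann algebra.

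\textbf{Step 2: difference quotients of the rotated tuple.} Define $\partial^Y_i:=\sum_jU_{ij}\partial_j$ on $M_0$. Since $\partial^Y_i(Y_k)=\sum_jU_{ij}U_{kj}\,1\otimes1=\delta_{ik}\,1\otimes1$, these derivations are the free difference quotients of $(Y_1,\dots,Y_n)$. In particular $\partial^Y_1=\delta_f$ with $\delta_f$ as in Lemma~\ref{lem:affine-conjugates-semicircle}(3). That lemma gives $\delta_f^\ast(1\otimes1)=\widehat f=Y_1$, i.e.
\[
\tau(Y_1P)=\tau\otimes\tau(\partial^Y_1P)\qquad\text{for all }P\in M_0 .
\]
So the conjugate variable of $Y_1$, relative to the whole tuple $(Y_1,\dots,Y_n)$, is $Y_1$ itself.

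\textbf{Step 3: freeness (the main step).} I would show that this relation determines the joint law of $(Y_1,\dots,Y_n)$ from the law of $(Y_2,\dots,Y_n)$. Take a monomial in the $Y$'s containing $k\ge1$ letters $Y_1$. By traciality, cyclically rotate it so that it reads $Y_1Q$. The relation then expresses $\tau(Y_1Q)$ as a sum of products $\tau(A)\tau(B)$, where $Y_1Q$ has been split at an occurrence of $Y_1$ in $Q$. Each such $A$ and $B$ contains at most $k-2$ letters $Y_1$ and has strictly smaller total degree. Induction on the number of $Y_1$'s therefore determines all mixed moments from the moments of $N$-words.

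Now let $S$ be a standard semicircular variable free from a copy of $(Y_2,\dots,Y_n)$ with the same law. By Voiculescu's computation, the conjugate variable of $S$ with respect to the tuple $(S,Y_2,\dots,Y_n)$ is $S$. So this model satisfies the same recursion with the same initial data, and the two joint distributions agree. Hence $Y_1$ is free from $(Y_2,\dots,Y_n)$, which is item (3).

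I expect this step to be the main obstacle. The bookkeeping must ensure the induction is well founded. In particular, applying the relation requires $\tau\otimes\tau(\partial_1^Y Q)$ to make sense for polynomials $Q$, which it does since $Q\in M_0$. Also, the word $Q$ may begin or end with $N$-letters; traciality is what handles this.

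\textbf{Step 4: the free product.} Freeness together with faithfulness of $\tau$ gives the trace-preserving isomorphism
\[
W^\ast(Y_1,\dots,Y_n)\cong W^\ast(Y_1)\ast W^\ast(Y_2,\dots,Y_n),
\]
by the standard uniqueness of reduced free products of tracial von Neumann algebras. Combining with item (4), and with $W^\ast(Y_1)\cong L^\infty([-2,2],\mu_{sc})$ via the spectral theorem and item (2), yields item (5). The free complementation statement is a restatement of item (5).
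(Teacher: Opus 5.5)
Your proof is correct. Steps 1 and 4 coincide with the paper's; the difference lies in how you obtain the conjugate relation for $Y_1$ and, above all, how you prove freeness.

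\textbf{The paper's route.} It gets the conjugate variable of $Y_1$ from Voiculescu's transformation rule for conjugate systems under an orthogonal change of variables, $\xi_{Y_i}=\sum_j U_{ij}\xi_j$, combined with $f=\sum_j c_j\xi_j$. It then obtains semicircularity and freeness in one stroke by citing the equality case of Voiculescu's free Cram\'er--Rao inequality \cite[Proposition~6.9]{V}.

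\textbf{Your route.} You check directly that $\sum_j U_{ij}\partial_j$ are the difference quotients of the rotated tuple. Hence $\partial^Y_1=\delta_f$, and Lemma~\ref{lem:affine-conjugates-semicircle}(3) already gives the Stein relation without the change-of-variables lemma. You then prove freeness by hand. Using traciality, the relation determines all mixed moments by strong induction on the number of occurrences of $Y_1$: each split removes two occurrences, and a word with a single occurrence has zero trace. A semicircular variable free from a copy of $(Y_2,\ldots,Y_n)$ satisfies the same word-level recursion with the same initial data, so the two joint laws agree.

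\textbf{What each buys.} Your argument is the natural extension of the Catalan recursion of Lemma~\ref{lem:affine-conjugates-semicircle}(4) to mixed moments. It is self-contained, uses only the polynomial-level identity, and yields items (2) and (3) simultaneously. The paper's argument is shorter, but it uses the Cram\'er--Rao equality case as a black box, and in the form actually needed: one variable relative to the algebra generated by the others.

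The only step you leave implicit is passing from freeness of $\mathbb{C}[Y_1]$ and $\mathbb{C}\langle Y_2,\ldots,Y_n\rangle$ to freeness of the generated von Neumann algebras. This is the standard density argument, which the paper also invokes.
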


\begin{remark}
In the regime $CD(1,\infty)$, it is immediate to see that
$\sigma(\Delta)\subset\{0\}\cup[1,\infty)$.  
When a non-zero saturator of the free Poincaré inequality exists,
Proposition~\ref{prop1} shows that it is an eigenfunction with eigenvalue $1$.
Hence $1$ is the first positive spectral value of $\Delta$, and
$\ker(\Delta-\mathrm{id})$ is indeed the first non-constant eigenspace (see again
the proof of Theorem~\ref{thm:BLP-Lipschitz}, which shows that $\ker(\Delta)$
consists only of scalar multiples of the identity).
\end{remark}

\begin{corollary}\label{cor:multi-free-obata}
Assume the hypotheses of Theorem~\ref{thm:free-product-lipschitz}, and in addition suppose that
the first eigenspace of the free Laplacian

\[
    E_1 := \ker(\Delta - \mathrm{id}) \subset L^2_0(M)
\]

is finite-dimensional of dimension $r \ge 1$.

By Corollary~\ref{cor1}, $E_1$ is spanned by a centered self-adjoint saturator of the free Poincaré inequality (reverse inclusion: \textit{eigenfunction}$\implies$ \textit{saturator} being immediate by duality). Since $E_1$ is assumed finite-dimensional, we may choose $f_1,\ldots,f_r$ to be an orthonormal basis of $E_1$. Then the following hold:

\begin{enumerate}

  \item There exists $U \in O(n)$ such that, writing $X_j^\circ := X_j - \tau(X_j).1$,

\[
      Y_i := \sum_{j=1}^n U_{ij}\,X_j^\circ,\qquad i=1,\ldots,n,
  \]

  we have

\[
      f_k = Y_k,\qquad k=1,\ldots,r.
  \]

  \item The variables $Y_1,\ldots,Y_r$ are standard semicircular variables (centered of variance $1$).

  \item The family $(Y_1,\ldots,Y_r)$ is free from the family $(Y_{r+1},\ldots,Y_n)$ under $\tau$.

  \item The von Neumann algebra generated by $X$ satisfies

\[
      W^\ast(X_1,\ldots,X_n)
      =
      W^\ast(Y_1,\ldots,Y_r,Y_{r+1},\ldots,Y_n).
  \]

  \item Consequently, $(W^\ast(X_1,\ldots,X_n),\tau)$ admits a trace-preserving
  free product decomposition:

\[
      (W^\ast(X_1,\ldots,X_n),\tau)
      \cong
      (W^\ast(Y_1,\ldots,Y_r),\tau_{\mathrm{sc}}^{*r})
      *
      (W^\ast(Y_{r+1},\ldots,Y_n),\tau_N),
  \]

  where $\tau_{\mathrm{sc}}^{*r}$ is the free product of $r$ copies of the standard
  semicircular state, and $\tau_N$ is the restriction of $\tau$ to
  $N=W^\ast(Y_{r+1},\ldots,Y_n)$. In particular,

\[
      (W^\ast(X_1,\ldots,X_n),\tau)
      \cong
      L(\mathbb{F}_r)
      *
      (W^\ast(Y_{r+1},\ldots,Y_n),\tau_Y).
  \]

\end{enumerate}

Moreover, $W^\ast(Y_1,\ldots,Y_r)$ is freely complemented in
$W^\ast(X_1,\ldots,X_n)$, with (free) complement $W^\ast(Y_{r+1},\ldots,Y_n)$.
\end{corollary}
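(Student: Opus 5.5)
The plan is to reduce the corollary to Corollary~\ref{affine} and Lemma~\ref{lem:affine-conjugates-semicircle}, applied now to the whole eigenspace rather than to a single saturator. Freeness will then come from a Schwinger--Dyson (conjugate-variable) characterization after an orthogonal change of variables.

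\emph{Step 1: a real orthonormal basis of $E_1$ and the matrix $U$.} By the computation in Corollary~\ref{cor1}, $\Delta(x^\ast)=(\Delta x)^\ast$, so $E_1$ is $J$-stable. Hence $E_1=E_1^{sa}\oplus iE_1^{sa}$, and the real subspace $E_1^{sa}$ has real dimension $r$. I choose $f_1,\dots,f_r$ to be a real-orthonormal basis of $E_1^{sa}$; this is also an orthonormal basis of $E_1$.

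Each $f_k$ saturates the inequality: indeed $\langle\Delta f,f\rangle=\mathcal E(f)$ for eigenfunctions. Therefore Corollary~\ref{affine} and Lemma~\ref{lem:affine-conjugates-semicircle} give real vectors $c^{(k)}\in\R^n$ with
\[
f_k=\sum_j c^{(k)}_j X_j^\circ=\sum_j c^{(k)}_j\xi_j ,\qquad \bar\partial_j f_k=c^{(k)}_j(1\otimes 1).
\]
The Dirichlet identity $\langle f_k,f_l\rangle=\langle\bar\partial f_k,\bar\partial f_l\rangle=\sum_j c^{(k)}_jc^{(l)}_j$ shows that $c^{(1)},\dots,c^{(r)}$ are orthonormal in $\R^n$. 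I complete them to an orthonormal basis and let $U\in O(n)$ have these vectors as rows. This gives item (1). Item (2) is Lemma~\ref{lem:affine-conjugates-semicircle}(4) applied to each $f_k$.

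\emph{Step 2: transport the conjugate system.} Since $U$ is orthogonal and the shift by constants is harmless, $W^\ast(X)=W^\ast(Y)$; this gives item (4). The free difference quotients with respect to $Y$ are $\partial^Y_i=\sum_j U_{ij}\partial_j$, because $\partial^Y_i Y_l=\sum_j U_{ij}U_{lj}\,1\otimes1=\delta_{il}\,1\otimes1$ and the two agree on polynomials. The corresponding conjugate variables are $\xi^Y_i=\sum_jU_{ij}\xi_j$. Using the second representation of $f_k$ from Step 1, for $k\le r$ this gives the key relation
\[
\xi^Y_k=Y_k ,\qquad\text{i.e.}\qquad \tau(Y_kP(Y))=\tau\otimes\tau(\partial^Y_kP(Y))\quad\text{for all polynomials }P.
\]

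\emph{Step 3 (main obstacle): freeness from the Schwinger--Dyson relations.} I will show that these relations determine every mixed moment of $(Y_1,\dots,Y_n)$ in terms of the moments of $(Y_{r+1},\dots,Y_n)$ alone. The argument is an induction on the number of letters from $\{Y_1,\dots,Y_r\}$, then on word length. Given a monomial containing some $Y_k$ with $k\le r$, I use traciality to rotate it into the form $Y_kW$. The relation then gives
\[
\tau(Y_kW)=\sum_{W=AY_kB}\tau(A)\tau(B),
\]
where both words $A,B$ contain fewer letters from $\{Y_1,\dots,Y_r\}$. So the joint law of $Y$ is uniquely determined by the law $\tau_N$ of $(Y_{r+1},\dots,Y_n)$.

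On the other hand, the model
\[
(S_1,\dots,S_r)\ast(Y_{r+1},\dots,Y_n),
\]
with $S_1,\dots,S_r$ a free semicircular family free from the rest, satisfies the same relations. This is Voiculescu's result that conjugate variables of a free semicircular family remain $S_k$ after adjoining a free family (equivalently, the vanishing of mixed free cumulants involving $S_k$ beyond order two). Hence the two laws coincide. This yields item (3) together with the semicircularity of the joint family $(Y_1,\dots,Y_r)$, which strengthens item (2).

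The delicate point is to organise the induction so that the right-hand side is genuinely of lower complexity. The words $A,B$ have strictly fewer letters from $\{Y_1,\dots,Y_r\}$, which suffices; words with no such letters are given by $\tau_N$.

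\emph{Step 4: conclusion.} Item (5) and the free complementation then follow from (3) and (4), since the von Neumann algebra generated by $r$ free standard semicirculars is $L(\mathbb F_r)$.
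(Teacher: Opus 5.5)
Your argument is correct, and it is organised differently from the paper's proof. The paper takes a self-adjoint orthonormal basis of $E_1$ and simply applies Theorem~\ref{thm:free-product-lipschitz} to each $f_k$ separately. Each application shows that $f_k$ is a standard semicircular variable. It also shows that $f_k$ is free from the von Neumann algebra generated by the directions orthogonal to $c^{(k)}$, i.e.\ $W^\ast(Y_k)$ is free from $W^\ast(Y_j : j\neq k)$.

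Two further steps are needed to reach items (1), (3) and (5), and the paper writes out neither:
\begin{enumerate}
\item A single $U$ must work for all $k$. This needs the orthonormality of the coefficient vectors, which the paper only records in the remark after the statement.
\item The $r$ one-versus-rest freeness relations must be combined, by associativity of freeness, into freeness of $W^\ast(Y_1),\dots,W^\ast(Y_r),W^\ast(Y_{r+1},\dots,Y_n)$.
\end{enumerate}

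You handle both points directly:
\begin{enumerate}
\item You build the common $U$ from the Dirichlet identity $\langle f_k,f_l\rangle=\sum_j c^{(k)}_jc^{(l)}_j$.
\item You get $\xi^Y_k=Y_k$ for all $k\le r$ at once from the formula $\partial^Y_i=\sum_j U_{ij}\partial_j$.
\item You prove joint freeness in one stroke by a Schwinger--Dyson uniqueness induction on the number of letters from $\{Y_1,\dots,Y_r\}$, compared against the model $(S_1,\dots,S_r)\ast(Y_{r+1},\dots,Y_n)$.
\end{enumerate}

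The paper's route is shorter and reuses the one-direction theorem as a black box. Yours is self-contained, avoids the associativity step, and shows directly that $(Y_1,\dots,Y_r)$ is a free semicircular family. Your $J$-stability argument also justifies the existence of a self-adjoint orthonormal basis of $E_1$. Such a basis is needed for $f_k=Y_k$ to hold, since each $Y_k$ is self-adjoint, and the paper takes it for granted.
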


\begin{remark}
The hypothesis that we have a first eigenspace $\dim E_1 = r < \infty$ is an additional very strong \emph{spectral assumption}.
It holds automatically in the semicircular/$q$--Gaussian algebra case~ \cite{Bozejko}, since $\Delta$
is exacty the \emph{number operator} on the Fock/q-Fock space and has a discrete spectrum with finite multiplicities.
For general free Gibbs states, and a fortiori for the more general case we considered here of \emph{Lipschitz conjugate variables}, compactness of the resolvent (and hence
finite-dimensionality of eigenspaces) is currently unknown an open problem; see discussion in 
Section~\ref{Open}.

Moreover, by the affine rigidity result (Corollary~\ref{affine}), every
first--eigenfunction is of the form
$f = \sum_{j=1}^n u_j X_j^\circ$ with $u \in \mathbb{R}^n$.
Thus the coefficient vectors $u_1,\ldots,u_r$ associated to an orthonormal basis
of $E_1$ are orthonormal in $\mathbb{R}^n$, and therefore

\[
r = \dim E_1 \le n.
\]

Consequently, Corollary~\ref{cor:multi-free-obata} has to be understood as a conditional rigidity
statement: whenever the first eigenspace is finite-dimensional, its extremal
directions necessarily form a free semicircular family and yield an
$L(\mathbb{F}_r)$ free product factor.
\end{remark}

\begin{proof}[Proof of Theorem~\ref{thm:free-product-lipschitz}]
We divide the proof into four steps.

\medskip
\noindent\textit{Step 1.}
By Corollary~\ref{cor1}, we may assume that $f$ is self-adjoint. Since $f$ is a non-zero
centered self-adjoint saturator of the free Poincaré inequality under $CD(1,\infty)$,
Lemma~\ref{lem:affine-conjugates-semicircle}(i)–(ii) yields coefficients
$c_1,\ldots,c_n\in \mathbb R$ such that

\[
  f=\sum_{j=1}^n c_j\,X_j^\circ
  \qquad\text{and}\qquad
  f=\sum_{j=1}^n c_j\,\xi_j,
\]

where $X_j^\circ:=X_j-\tau(X_j).1$ and $(\xi_1,\ldots,\xi_n)$ are the (bounded, i.e. in $M$ and not merely in $L^2(M)$ by~\ref{prop5}) Lipschitz
conjugate variables of $X$. Moreover, by Lemma~\ref{lem:affine-conjugates-semicircle}(i),

\[
  \|f\|_{L^2(M)}^2=\sum_{j=1}^n c_j^2>0.
\]

Set $c=(c_1,\ldots,c_n)$, for which we have  $\|c\|_2=(\sum_{j=1}^nc_j^2)^{1/2}=\|f\|_{L^2(M)}>0$, and $u=c/\|c\|_2$.
Choose $U\in O(n)$ with first row $U_{1\ast}=u$ and define

\[
  Y_i:=\sum_{j=1}^n U_{ij}\,X_j^\circ,
  \qquad i=1,\ldots,n.
\]

Then

\[
  f=\sum_{j=1}^n c_j X_j^\circ
    =\|c\|_2\sum_{j=1}^n u_j X_j^\circ
    =\|f\|_{L^2(M)}\,Y_1,
\]

so that

\[
  Y_1=\frac{f}{\|f\|_{L^2(M)}}.
\]

Since $U$ is orthogonal, the linear change of variables $X^\circ\mapsto Y$ is invertible, and hence

\[
  W^\ast(X_1,\ldots,X_n)
  =W^\ast(X_1^\circ,\ldots,X_n^\circ)
  =W^\ast(Y_1,\ldots,Y_n).
\]

\medskip
\noindent\textit{Step 2.} Since conjugate variables are invariant under adding scalar multiples of the identity to the generators, the conjugate system associated with $X^\circ=(X_1^\circ,\ldots,X_n^\circ)$ is still $(\xi_1,\ldots,\xi_n)$. 

\medskip

By Voiculescu’s results on derivations and conjugate variables under real invertible linear changes of
variables (\cite[Remark~6.6, Lemma~6.7, Corollary~6.8]{V}), since $Y = UX^\circ$, the
conjugate system transforms by the transpose of the inverse matrix, i.e. the conjugate variables of $Y$ are $\xi_{Y_i} = \sum_{j=1}^n
(U^{-1})_{ji}\,\xi_j$. Since $U$ is orthogonal, $U^{-1}=U^{T}$, i.e.
$(U^{-1})_{ji}=U_{ij}$, so that $
    \xi_{Y_i}=\sum_{j=1}^n
U_{ij}\,\xi_j$.
In particular, 
\begin{equation}
    \xi_{Y_1}=\sum_{j=1}^n
U_{1j}\,\xi_j=\sum_{j=1}^n u_j\,\xi_j
\end{equation}
\bigbreak
Using $f=\sum_{j=1}^n c_j\xi_j$ and $c=\|c\|_2\,u$, we obtain

\[
  f=\sum_{j=1}^n c_j\,\xi_j
   =\|c\|_2\sum_{j=1}^n u_j\,\xi_j
   =\|f\|_{L^2(M)}\,\xi_{Y_1}.
\]

Comparing with $f=\|f\|_{L^2(M)}\,Y_1$, we conclude that

\[
  \xi_{Y_1}=Y_1.
\]

\medskip

\medskip
\noindent\textit{Step 3.} From Step 2, we have the conjugate of $Y_1$ which is equal to itself, i.e. 
\[
\xi_{Y_1} = Y_1.
\]

By Voiculescu’s free Cramér--Rao equality characterization~\cite[Proposition~6.9]{V}, which is also a Stein-type characterization, this immediately implies that $Y_1$ is a centered semicircular variable with variance $1$ and free from $Y_2,\ldots,Y_n$, i.e. $\mathbb{C}[Y_1]$ free from the unital $*$-algebra $N_0 := \mathbb{C}\langle Y_2, \dots, Y_n \rangle$. Then, by Kaplansky’s density theorem (cf. Mingo--Speicher\cite[Proposition~5]{MS}), $N_0$ is strongly dense in the von Neumann algebra $N = W^\ast(Y_2, \dots, Y_n)$ and freeness extend so that
$Y_1$, i.e. $\mathbb{C}[Y_1]$, and thus $W^*(Y_1)$, is free from $N = W^\ast(Y_2, \dots, Y_n)$, proving items~\textnormal{(2)} and~\textnormal{(3)}.

\medskip
\noindent\textit{Step 4.}
Since $U$ is orthogonal,

\[
  W^\ast(X_1,\ldots,X_n)=W^\ast(Y_1,Y_2,\ldots,Y_n),
\]

and $Y_1$ is standard semicircular, free from $W^\ast(Y_2,\ldots,Y_n)$, we therefore have by the Universal property characterizing the free product (see.~\cite{VD} for details):

\[
  (W^\ast(X_1,\ldots,X_n),\tau)
  \cong
  (W^\ast(Y_1),\tau_{\mathrm{sc}})\ *\ (W^\ast(Y_2,\ldots,Y_n),\tau_N),
\]

where $\tau_{\mathrm{sc}}$ is the standard semicircular state on $W^\ast(Y_1)$ and $\tau_N$ is the
restriction of $\tau$ to $N=W^\ast(Y_2,\ldots,Y_n)$. Since
$(W^\ast(Y_1),\tau_{\mathrm{sc}})\cong L^\infty([-2,2],\mu_{\mathrm{sc}})$, item~(5) follows, and the
theorem is proved.
\end{proof}
\qed

\begin{proof}[Proof of Corollary~\ref{cor:multi-free-obata}]
Let $f_1,\ldots,f_r$ be a self-adjoint (and bounded, i.e. in $M$) orthonormal basis of $E_1$.  
By Proposition~\ref{prop1}, each $f_k$ is a centered self-adjoint saturator of the
free Poincaré inequality. Applying Theorem~\ref{thm:free-product-lipschitz} to each
$f_k$ yields the conclusion.
\end{proof}
\qed
\begin{flushleft}
As another direct consequence, the free product decomposition obtained above
yields several structural properties of the resulting von Neumann algebras,
in both the one-dimensional and multi-dimensional splitting regimes.  In
particular, this recovers the corresponding results (in our particular case) of
Dabrowski~\cite{Dab10} by more direct arguments that avoid free
Fisher/entropy techniques.
\end{flushleft}
\begin{flushleft}
We first record a general factoriality criterion for free products, which is well known and can for example be found in Ioana's lecture notes~\cite[Exercise~5.24]{Ioana}.    
\end{flushleft}

\begin{lemma}\label{lem:free-product-factor}
Let $(M_1,\tau_1)$ and $(M_2,\tau_2)$ be tracial von Neumann algebras such that
$M_1$ is diffuse and $M_2\neq\C 1$. Let $(M,\tau)=(M_1,\tau_1)*(M_2,\tau_2)$ be
their tracial free product. Then $M$ is a $\mathrm{II}_1$ factor.
\end{lemma}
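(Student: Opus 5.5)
The plan is to show that the center $Z(M)=M'\cap M$ is trivial and that $M$ is infinite dimensional. Since $M_1$ is diffuse, $M$ is then a $\mathrm{II}_1$ factor, because a finite factor with a faithful normal trace that is not finite dimensional is of type $\mathrm{II}_1$.

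\emph{Reduction to a Haar unitary.} Because $M_1$ is diffuse, it contains a diffuse abelian subalgebra. Such an algebra contains a Haar unitary $u\in M_1$, i.e.\ $\tau(u^k)=0$ for all $k\neq 0$. Because $M_2\neq\C 1$, we can pick a self-adjoint $b\in M_2$ with $\tau(b)=0$ and $b\neq 0$. If $b^2$ is not a scalar, we replace $b$ by a suitable unitary: $M_2\ne\C1$ contains a nontrivial projection $p$, and then $v=1-2p$ is a self-adjoint unitary in $M_2$, not scalar, though generally not centered.

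\emph{The central element is supported on the constants.} Let $z\in Z(M)$ and assume $\tau(z)=0$. Decompose $L^2(M)$ into the standard free product orthogonal decomposition
\[
\C 1\oplus\bigoplus_{k\ge1}\ \bigoplus_{i_1\neq\cdots\neq i_k} \mathring{L}^2(M_{i_1})\otimes\cdots\otimes \mathring{L}^2(M_{i_k}).
\]
The key estimate is the following. Conjugation by $u^m$ maps each word space starting or ending with a letter in $M_2$ to an orthogonal copy. As $m$ varies, the words $u^m w u^{-m}$ for words $w$ beginning and ending in $\mathring M_2$ are mutually orthogonal. Since $u^m z u^{-m}=z$, the component of $z$ on such words must vanish, because an $L^2$ vector cannot have infinitely many orthogonal translates of equal norm summing consistently. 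Concretely, $\langle z,\, u^m \eta u^{-m}\rangle=\langle z,\eta\rangle$, and the Bessel inequality forces $\langle z,\eta\rangle=0$. A similar argument with the unitary built from $M_2$ kills the words that begin or end with letters of $\mathring M_1$. More simply, $z$ commutes with $M_1$, and the $M_1$-bimodule $L^2(M)\ominus L^2(M_1)$ is a multiple of the coarse bimodule, which has no $M_1$-central vectors since $M_1$ is diffuse. This gives $z\in L^2(M_1)$. Then $z\in M_1$ commutes with $M_2$. Writing $[z,b]=0$ with $z,b$ centered, freeness gives $\|zb-bz\|_2^2=2\|z\|_2^2\|b\|_2^2$, so $z=0$. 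Hence $Z(M)=\C1$.

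\emph{Main obstacle.} The main obstacle is the step showing that a central vector has no component on reduced words. I would handle it with the coarse-bimodule argument: $L^2(M)\ominus L^2(M_1)\cong \bigoplus L^2(M_1)\otimes K\otimes L^2(M_1)$ as $M_1$-bimodules, and diffuseness of $M_1$ implies that a coarse bimodule contains no nonzero vector $\eta$ with $x\eta=\eta x$ for all $x\in M_1$. To see this, approximate $\eta$ by finite sums and use $\tau(u^k)\to0$ weakly for the Haar unitary. Infinite dimensionality is immediate since $M\supset M_1$ is diffuse.
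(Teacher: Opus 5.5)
Your proposal is correct. The paper gives no proof of this lemma; it only cites Ioana's lecture notes (Exercise~5.24). The argument you finally rely on is the standard solution to that exercise, and it goes as follows.
\begin{itemize}
\item As an $M_1$-bimodule, $L^2(M)\ominus L^2(M_1)$ is a multiple of the coarse bimodule $L^2(M_1)\bar\otimes L^2(M_1)$.
\item The coarse bimodule has no nonzero $M_1$-central vectors when $M_1$ is diffuse. Your Haar unitary / Bessel argument shows this; equivalently, the diffuse algebra $M_1'=JM_1J$ contains no nonzero Hilbert--Schmidt operator.
\item This gives the stronger statement $M_1'\cap M\subseteq M_1$, hence $Z(M)\subseteq M_1\cap M_2'$.
\item For $z\in\mathring{M}_1$ and $0\neq b\in\mathring{M}_2$, freeness gives $\|zb-bz\|_2^2=2\|z\|_2^2\|b\|_2^2$. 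So $[z,b]=0$ forces $z=0$, and $Z(M)=\mathbb{C}1$.
\end{itemize}
A finite factor containing the diffuse algebra $M_1$ is infinite-dimensional, so $M$ is a $\mathrm{II}_1$ factor.

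You should remove the preliminary attempts that this argument supersedes.
\begin{itemize}
\item The ``similar argument with the unitary built from $M_2$'' does not work in general. $M_2$ may be, e.g., $\mathbb{C}^2$, which contains no Haar unitary. The unitary $v=1-2p$ satisfies $v^2=1$, so conjugating by its powers gives no infinite orthogonal family.
\item Conjugation by $u^m$ does not produce orthogonal copies of mixed words such as $ab$ with $a\in\mathring{M}_1$, $b\in\mathring{M}_2$. These words are handled precisely by the coarse-bimodule step.
\item The discussion of replacing $b$ by a self-adjoint unitary is unnecessary. Only some nonzero $b\in\mathring{M}_2$ is used at the end, and one exists because $M_2\neq\mathbb{C}1$.
\end{itemize}
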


\begin{corollary}\label{cor:nonamenable-nonGamma}
Assume the hypotheses of Theorem~\ref{thm:free-product-lipschitz} or of
Corollary~\ref{cor:multi-free-obata}, and suppose $n\ge 2$. Then, for
$M=W^\ast(X_1,\ldots,X_n)$, we have:
\begin{enumerate}
  \item $M$ is a $\mathrm{II}_1$ factor;
  \item $M$ is non-amenable.
\end{enumerate}
\end{corollary}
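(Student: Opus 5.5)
We will deduce both assertions from the free product decomposition of Theorem~\ref{thm:free-product-lipschitz} (or Corollary~\ref{cor:multi-free-obata}), namely
\[
M=W^\ast(X_1,\ldots,X_n)\cong W^\ast(Y_1)\ast N,\qquad N=W^\ast(Y_2,\ldots,Y_n),
\]
where $W^\ast(Y_1)\cong L^\infty([-2,2],\mu_{\mathrm{sc}})$ is diffuse. In the multi-dimensional case we use instead $W^\ast(Y_1,\ldots,Y_r)\ast W^\ast(Y_{r+1},\ldots,Y_n)$ when $r<n$. If $r=n$, then $M\cong L(\mathbb F_n)$ with $n\ge2$, and both claims are classical; alternatively one can split $L(\mathbb F_n)=W^\ast(Y_1)\ast W^\ast(Y_2,\ldots,Y_n)$, so that this case reduces to the first one.

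\textbf{Step 1: the complement is non-trivial.} Since $n\ge 2$, the element $Y_2$ belongs to $N$. We must exclude $N=\C1$. The $Y_i$ are non-zero linear combinations of the $X_j^\circ$ with orthonormal coefficient rows. Algebraic freeness of $X$, which holds under Lipschitz conjugate variables by Dabrowski~\cite[Lemma~37]{Dab14} and Mai--Speicher--Weber~\cite{MSW}, therefore implies that $Y_2$ is not a scalar. A more elementary route is also available: $\|Y_2\|_{L^2(M)}^2=\tau(Y_2^2)>0$ because $\tau$ is faithful and $Y_2\neq0$, while $\tau(Y_2)=0$, so $Y_2\notin\C1$.

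\textbf{Step 2: factoriality.} With $M_1=W^\ast(Y_1)$ diffuse and $M_2=N\neq\C1$, Lemma~\ref{lem:free-product-factor} applies directly and shows that $M$ is a $\mathrm{II}_1$ factor. This proves item (1).

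\textbf{Step 3: non-amenability.} This is the main obstacle, because it needs more than the factoriality criterion. The plan is to exhibit a non-amenable subalgebra, or to use the structure of free products. Since $N\neq\C1$, choose a non-trivial projection or unitary in $N$, or work in a diffuse abelian subalgebra of $N$ if there is one.

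Our first approach is to produce a copy of $L(\mathbb F_2)$ inside $M$ with a conditional expectation. Take a Haar unitary $u\in W^\ast(Y_1)$, which exists because $W^\ast(Y_1)$ is diffuse. Take also a unitary $v\in N$ with $\tau(v)=0$. When $N$ is not abelian of dimension two, such a $v$ exists from a non-trivial projection $p$ of trace $1/2$; in general one uses $v=2p-1$ after cutting down.

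The elements $u$ and $vuv^\ast$ are free Haar unitaries: this follows from freeness of $W^\ast(Y_1)$ and $N$ by the standard argument for $A\ast B$ with $A$ diffuse. They therefore generate $L(\mathbb F_2)\subset M$. Since subalgebras of amenable finite von Neumann algebras are amenable (via the trace-preserving conditional expectation), $M$ cannot be amenable.

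Should the unitary construction be delicate in small cases, such as $N=\C^2$ with unequal weights, we fall back on the general fact that a free product $M_1\ast M_2$ with $M_1$ diffuse and $M_2\ne\C1$ is a non-amenable factor. This can be cited from Ioana's notes alongside Lemma~\ref{lem:free-product-factor}, or from Popa's results on freely complemented subalgebras~\cite{Popa}.
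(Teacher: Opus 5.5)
Steps 1 and 2 are fine and agree with the paper. The gap is in Step 3. Your construction needs a unitary $v\in N$ with $\tau(v)=0$, and the justification you give for its existence is false. Knowing only $N\neq\mathbb{C}1$ does not provide such a $v$. Suppose $N$ has a central projection $z$ with $Nz=\mathbb{C}z$ and $\tau(z)>1/2$; examples are $N=\mathbb{C}^3$ with weights $(0.1,0.1,0.8)$, or $\mathbb{C}^2$ with unequal weights. Then every unitary $v\in N$ satisfies $vz=\lambda z$ with $|\lambda|=1$, hence $|\tau(v)|\ge 2\tau(z)-1>0$. Such an $N$ also has no projection of trace $1/2$. ``Cutting down'' does not help, because $v$ must be a unitary of $N$ for $vuv^\ast$ to be a Haar unitary free from $u$. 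The fallback does not close the gap either: ``$M_1\ast M_2$ is non-amenable when $M_1$ is diffuse and $M_2\neq\mathbb{C}1$'' is exactly what is being proved. Invoking it without argument leaves item (2) unproved.

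The missing ingredient is the one the paper uses: in this setting $N$ is \emph{diffuse}, not merely non-trivial. Under the orthogonal change of variables the conjugate system becomes $\xi_{Y_i}=\sum_j U_{ij}\xi_j$. Projecting $\xi_{Y_2}$ onto $L^2(W^\ast(Y_2))$ gives a conjugate variable for $Y_2$ alone. Finite free Fisher information rules out atoms, so $W^\ast(Y_2)\subset N$ contains a Haar unitary $v$. With this your argument goes through; your observation that $A$ and $vAv^\ast$ are free whenever $\tau(v)=0$ is correct. You could then also simply take the free Haar unitaries $u,v$, as the paper does.

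Alternatively, you can avoid diffuseness of $N$ by making the Popa route explicit. By Step 2, $M$ is a $\mathrm{II}_1$ factor, so the diffuse abelian, freely complemented $W^\ast(Y_1)$ is maximal amenable. Amenability of $M$ would then force $M=W^\ast(Y_1)$. But freeness gives $N\cap W^\ast(Y_1)=\mathbb{C}1$, since a centered $y$ in both has $\tau(y^\ast y)=\tau(y^\ast)\tau(y)=0$. This contradicts $N\neq\mathbb{C}1$.

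Your last step, passing amenability to $L(\mathbb{F}_2)$ through the trace-preserving conditional expectation, is sound. It is more direct than the paper's appeal to Connes' classification.
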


\begin{proof}
Under the hypotheses of Theorem~\ref{thm:free-product-lipschitz} or
Corollary~\ref{cor:multi-free-obata}, we obtain a free product
decomposition

\[
(M,\tau)
\simeq
(W^\ast(Y_1),\tau_{\mathrm{sc}})
*
(N,\tau_N),
\]

where $Y_1$ is a centered semicircular variable of variance~$1$, and
$N=W^\ast(Y_2,\ldots,Y_n)$ with restricted trace (or $W^\ast(Y_{r+1},\ldots,Y_n)$ in the
multi-dimensional case).

By Voiculescu’s result~\cite[Corollary~4.7]{V}, finite free Fisher
information (implying finite free entropy by free log-Sobolev, cf.~\cite[Proposition~7.9]{V}) ensures that
each $X_j$ has a diffuse distribution. Since translation preserves
diffuseness, each centered variable $X_j^\circ$ is diffuse. The
orthogonal change of variables

\[
(X_1^\circ,\ldots,X_n^\circ)\mapsto (Y_1,Y_2,\ldots,Y_n)
\]

shows that each $(Y_2,\ldots,Y_n)$ has conjugate variable, and so each component is diffuse. For $n\ge 2$, at least one $Y_i$ is non-trivial, so
$N=W^\ast(Y_2,\ldots,Y_n)$ is non-trivial and contains a diffuse element.

\smallskip
\begin{enumerate}
\item Since $W^\ast(Y_1)$ is diffuse and $N\neq\C 1$, Lemma~\ref{lem:free-product-factor}
applies, and we conclude that $M$ is a $\mathrm{II}_1$ factor.

\item Identifying $W^*(Y_1)\cong L^\infty([-2,2],\mu_{\mathrm{sc}})$, we then easily produce a Haar unitary in $W^*(Y_1)$.
\newline
Since $N$ contains a diffuse element, it contains a diffuse abelian subalgebra 
$A\subset N$, and we can find $v$ a Haar unitary in $A$ (see Mingo-Speicher~\cite[Theorem~6]{SM} for details).  In the free product decomposition
$M\simeq W^*(Y_1)*N$, the Haar unitaries $u$ and $v$ are free, so
$W^*(u,v)\cong L(\mathbb F_2)\subset M
$. Since $M$ is a $\mathrm{II}_1$ factor with separable predual, if it were amenable then by Connes' classification it would be isomorphic to the hyperfinite $\mathrm{II}_1$ factor $R$, but every $\mathrm{II}_1$ subfactor of $R$ is again isomorphic to $R$, so hyperfinite/amenable, whereas $L(\mathbb F_2)$ is not. Thus $M$ is non-amenable.

\end{enumerate}
\end{proof}
\qed
\begin{flushleft}
A more striking point is the following. Popa’s breakthrough result shows that in a $\mathrm{II}_1$ factor, diffuse amenable subalgebras which are freely complemented are automatically
maximal amenable.
\end{flushleft}
\begin{theorem}[Popa~\cite{Popa}]
If $M$ is a $\mathrm{II}_1$ factor, then any diffuse amenable subalgebra
$B \subset M$ that is freely complemented in $M$ is maximal amenable in $M$.
\end{theorem}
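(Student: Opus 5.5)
The plan follows Popa's original strategy from the paper cited as \cite{Popa}. It combines two ingredients: an \emph{asymptotic orthogonality} property coming from free complementation, and an abundance of relative central sequences coming from amenability. Write $M=B*N$ with $N\neq\C 1$. Let $B\subset Q\subset M$ with $Q$ amenable. We must show $Q=B$, and we argue by contradiction, fixing $x\in Q\ominus B$ with $x\neq 0$. Fix a free ultrafilter $\omega$ on $\mathbb N$ and work in the ultrapower $M^\omega$.

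\emph{Step 1 (bimodule structure from freeness).} Expanding $L^2(M)$ into reduced words in $B\ominus\C1$ and $N\ominus\C1$, I would show that, as a $B$--$B$ bimodule, $L^2(M)\ominus L^2(B)$ is a direct sum of copies of the coarse bimodule $L^2(B)\bar\otimes L^2(B)$. Concretely, the summands are indexed by the words that begin and end with a letter from $N\ominus\C1$, with $B$ acting freely on both sides. Because $B$ is diffuse, this yields a mixing property: if $(u_k)$ are unitaries in $B$ tending weakly to $0$, then $\|E_B(a u_k b)\|_2\to 0$ for all $a,b\in M\ominus B$.

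\emph{Step 2 (asymptotic orthogonality).} From Step 1 I would deduce Popa's lemma. Suppose $z=(z_k)\in (B'\cap M^\omega)$ with $z\perp B^\omega$, and $a,b\in M\ominus B$. Then $a z\perp z b$ in $L^2(M^\omega)$, and in particular $\|az-zb\|_2^2=\|az\|_2^2+\|zb\|_2^2$. The proof approximates each $z_k$ by finite sums of coarse vectors and uses the fact that $z_k$ asymptotically commutes with a diffuse family of unitaries in $B$. The mixing from Step 1 then kills the cross terms $\langle a z_k, z_k b\rangle$ in the limit.

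\emph{Step 3 (central sequences from amenability).} Since $Q$ is amenable, hence hyperfinite by Connes, and $B\subset Q$ is diffuse, I would use Popa's relative result in the hyperfinite setting. Its content is that $B'\cap Q^\omega$ is large: its commutant $(B'\cap Q^\omega)'\cap Q$ inside $Q$ should reduce to $B$, at least after cutting by central projections. Consequently, for our $x\in Q\ominus B$ there is some $z\in B'\cap Q^\omega$, with $z\perp B^\omega$ and $\|z\|_2$ bounded below, such that $\|[x,z]\|_2$ is small compared to $\|xz\|_2$. I would obtain $z$ by approximating $Q$ by finite-dimensional algebras $Q_k\supset$ (discretizations of) $B$, and then averaging over unitary groups.

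\emph{Step 4 (conclusion).} Apply Step 2 with $a=b=x$: $\|xz-zx\|_2^2=\|xz\|_2^2+\|zx\|_2^2\ge \|xz\|_2^2$. Combined with Step 3, which says $\|xz-zx\|_2$ is small relative to $\|xz\|_2$, this gives a contradiction unless $xz=0$ for all such $z$. Together with the bound $\|z\|_2$ bounded below and the diffuseness of the relevant algebra, that forces $x=0$. Hence $Q=B$. The factoriality of $M$ (Lemma~\ref{lem:free-product-factor}) is used only to rule out degenerate central summands.

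The main obstacle is Step 3. The precise amenability input needed is that every element of $Q\ominus B$ fails to asymptotically commute with some sequence in $B'\cap Q^\omega$ orthogonal to $B^\omega$, and this requires Popa's delicate hyperfinite-approximation construction. My first attempt would be to avoid it via a bimodule reformulation. Amenability of $Q$ means $L^2(Q)$ is weakly contained in the coarse $Q$-bimodule, and restricting to $B$ gives that $L^2(Q)\ominus L^2(B)\subset L^2(M)\ominus L^2(B)$ is coarse over $B$ by Step 1. I would then try to derive a contradiction from $x$ generating, inside $Q$, a $B$-bimodule that is simultaneously coarse and approximately $Q$-central.
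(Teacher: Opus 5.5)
The paper does not prove this theorem; it is simply quoted from Popa. Your outline therefore has to stand on its own. Its architecture (asymptotic orthogonality from freeness, plus central sequences from amenability) is Popa's, but the amenability input in Step~3 is unproved and, as stated, points the wrong way.

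The property $(B'\cap Q^\omega)'\cap Q=B$ says that every $x\in Q\ominus B$ \emph{fails} to commute with some $z\in B'\cap Q^\omega$, and your last paragraph asks for the same thing. Step~4 needs the opposite: some $z\in B'\cap Q^\omega$ with $E_{B^\omega}(z)=0$ that \emph{commutes} with $x$ while $xz\neq 0$. For a non-commuting $z$, the identity $\|xz-zx\|_2^2=\|xz\|_2^2+\|zx\|_2^2$ contradicts nothing, so your ``Consequently'' does not follow.

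What works, for abelian $B$ (Popa's setting), is a relative property $\Gamma$:
\begin{itemize}
\item Let $q\in Z(Q)$ with $Qq$ of type $\mathrm{II}_1$. Then $Qq$ is hyperfinite, hence McDuff, so $(Qq)'\cap(Qq)^\omega$ contains unital copies of $M_m(\mathbb{C})$ for every $m$.
\item Since $B^\omega$ is abelian, the $m^2$ clock--shift unitaries $w$ of such a copy satisfy $\sum_w\|E_{B^\omega}(w)\|_2^2\le m$. A diagonal argument then produces a unitary $v\in(Qq)'\cap(Qq)^\omega$ with $E_{B^\omega}(v)=0$.
\item Since $xv=vx$, asymptotic orthogonality gives $\|xv\|_2^2=\langle xv,vx\rangle=0$. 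Hence $xq=0$, because $vv^\ast=q$. This also replaces the vague last sentence of Step~4.
\end{itemize}

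Such a $v$ need not exist on the rest of $Q$, and cutting by central projections does not help. Take $Q=A\bar\otimes M_2(\mathbb{C})\supset B=A\bar\otimes D_2$ with $A$ diffuse abelian. Then $B'\cap Q^\omega=A^\omega\otimes D_2=B^\omega$, so Steps~3--4 cannot even start. These type~I pieces must be excluded separately, by the mixing property of Step~1. Suppose $v\in Q$ is a partial isometry with $v^\ast v=e$ and $vv^\ast=f$ orthogonal projections of $B$, and $vBev^\ast=Bf$. Then $v\in M\ominus B$. Yet $E_B(vu_kv^\ast)=vu_kv^\ast$ keeps $2$-norm $\|f\|_2$ for unitaries $u_k$ of $Be$ tending weakly to $0$, which contradicts mixing.

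Finally, the coarse/mixing structure of Step~1 cannot by itself give Step~2, and your bimodule fallback cannot work either. Take the acting copy of $\mathbb{Z}$ in the amenable ICC group $\mathbb{Z}[1/2]\rtimes\mathbb{Z}$, with $\mathbb{Z}$ acting by multiplication by $2$. This subgroup is malnormal, so $L^2(L(\mathbb{Z}[1/2]\rtimes\mathbb{Z}))\ominus L^2(L(\mathbb{Z}))$ is a multiple of the coarse bimodule. Yet the ambient factor is amenable, and by the lemma above asymptotic orthogonality fails there.

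So Step~2 must use the reduced-word combinatorics, not just mixing. By diffuseness, $z_k$ asymptotically lives on words beginning and ending with letters of $B\ominus\mathbb{C}1$. Hence, for $n,n'\in N\ominus\mathbb{C}1$, the vectors $nz_k$ and $z_kn'$ begin with letters from different free factors, and are therefore orthogonal.
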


\begin{corollary}
Assuming the hypotheses and notations of
Theorem~\ref{thm:free-product-lipschitz} or of
Corollary~\ref{cor:multi-free-obata}, the subalgebras $W^\ast(Y_1)$ or $W^\ast (Y_i),\:1\leq i\leq r$ are
maximal amenable in $W^\ast(X_1,\ldots,X_n)$, and hence in particular MASAs.
\end{corollary}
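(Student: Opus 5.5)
The plan is to combine Popa's theorem, just recalled, with the structural output of Theorem~\ref{thm:free-product-lipschitz} and Corollary~\ref{cor:nonamenable-nonGamma}, after checking each of Popa's hypotheses in turn. Throughout, $M=W^\ast(X_1,\ldots,X_n)$ and $n\ge 2$, as in Corollary~\ref{cor:nonamenable-nonGamma}; for $n=1$ we have $M=W^\ast(Y_1)$, which is trivially maximal amenable in itself.

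\textbf{Step 1 ($M$ is a $\mathrm{II}_1$ factor).} This is item~(1) of Corollary~\ref{cor:nonamenable-nonGamma}, which is applicable under either set of hypotheses.

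\textbf{Step 2 ($B$ is diffuse and amenable).} Take $B=W^\ast(Y_1)$, or $B=W^\ast(Y_i)$ with $1\le i\le r$ in the multi-dimensional case. By item~(2) of the theorem (resp.\ the corollary), $Y_i$ is standard semicircular, so
\[
B\cong L^\infty([-2,2],\mu_{\mathrm{sc}}).
\]
This is abelian, hence amenable. It is diffuse because $\mu_{\mathrm{sc}}$ has no atoms.

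\textbf{Step 3 ($B$ is freely complemented).}
\begin{itemize}
\item For $W^\ast(Y_1)$ in the one-direction case, this is the last assertion of Theorem~\ref{thm:free-product-lipschitz}, with complement $W^\ast(Y_2,\ldots,Y_n)$.
\item In the multi-dimensional case we need a complement for each individual $W^\ast(Y_i)$. The family $(Y_1,\ldots,Y_r)$ is semicircular and free, so $W^\ast(Y_1),\ldots,W^\ast(Y_r)$ are free. It is also free from $W^\ast(Y_{r+1},\ldots,Y_n)$ by item~(3).
\item Associativity of freeness then shows that $W^\ast(Y_i)$ is free from
\[
N_i:=W^\ast\big(Y_j:\ j\ne i\big).
\]
\item Together with $M=W^\ast(Y_1,\ldots,Y_n)$ (item~(4)) and the universal property of free products, this gives $M\cong W^\ast(Y_i)*N_i$.
\item Finally $N_i\neq\mathbb C1$: it contains some $Y_j$ with $j\ne i$, which has diffuse distribution, exactly as argued in the proof of Corollary~\ref{cor:nonamenable-nonGamma}.
\end{itemize}
This step is the only point that is not a direct quotation of earlier results. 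The main thing to verify is that freeness of $W^\ast(Y_i)$ from the von Neumann algebra $N_i$ follows from freeness at the level of the $\ast$-algebras. This is handled by the same Kaplansky density argument used in Step~3 of the proof of Theorem~\ref{thm:free-product-lipschitz}.

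\textbf{Step 4 (conclusion).} Popa's theorem applies and gives that $B$ is maximal amenable in $M$. Since $B$ is abelian, any abelian subalgebra containing $B$ is amenable, so it equals $B$ by maximality. Hence $B$ is a maximal abelian subalgebra, i.e.\ a MASA.
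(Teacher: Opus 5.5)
Your proposal is correct and follows the paper's route: the paper gives no separate proof and obtains this corollary directly from Popa's theorem, applied to the diffuse abelian, freely complemented subalgebra $W^\ast(Y_i)$ inside the $\mathrm{II}_1$ factor $W^\ast(X_1,\ldots,X_n)$ (for $n\ge 2$). Your Step~3 makes explicit a point the paper leaves implicit: in the multi-dimensional case, each individual $W^\ast(Y_i)$ is freely complemented, and not merely the non-amenable block $W^\ast(Y_1,\ldots,Y_r)$.
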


\begin{remark}
Under the same hypotheses, one also has much stronger rigidity properties
for the free product decomposition of Theorem~\ref{thm:free-product-lipschitz}.

\begin{enumerate}
  \item \emph{Non-$\Gamma$.}
  In the decomposition of $M$ given by Theorem~\ref{thm:free-product-lipschitz},
  both free components are diffuse and non-trivial.  Hence Houdayer’s
  $\Gamma$-stability theorem for free products \cite[Theorem~A]{Houdayer-Gamma}
  applies to the inclusion $W^\ast(Y_1)\subset M$, and it follows that $M$
  does not have property~$\Gamma$.

  \item \emph{Absence of Cartan subalgebras.}
  By the same decomposition, $M$ is a free product of non-trivial, infinite-dimensional
  tracial von Neumann algebras.  Therefore Ioana’s result on the absence
  of Cartan subalgebras in free products \cite[Corollary~1.5]{Ioana-Cartan-AFP}
  applies, and $M$ admits no Cartan subalgebra.
\end{enumerate}
\end{remark}

\section{Open questions}\label{Open}
\subsection{Compactness of the resolvent and discrete spectrum}

In the classical setting, for a \emph{regular}
log-concave probability measure
$\mu\propto e^{-V}dx$ on $\mathbb{R}^n$ with generator

\begin{equation}\label{langevin}
   \mathcal L_{V}u=\Delta u-\nabla V\cdot\nabla u, 
\end{equation}

the operator $-\mathcal L_{V}$ is essentially self-adjoint and generates a symmetric
Markov semigroup $(P_t)_{t\ge0}$ on $L^2(\mu)$ with Dirichlet form
$\mathcal{E}(f)=\int |\nabla f|^2\,d\mu$.  A fundamental fact is the
equivalence
\begin{equation}\label{eq:comp-resolvent-discrete}
(\alpha+\mathcal L_{V})^{-1}\ \text{compact}
\quad\Longleftrightarrow\quad
\text{$-\mathcal L_{V}$ has discrete spectrum},
\end{equation}
see \cite{Davies,ReedSimonIV}.  Compactness may follow from several
functional-analytic criteria, such as Rellich--Kondrachov compact
embeddings, hypercontractivity, or Nash inequalities
\cite{Ledoux,Davies}.  Under these conditions, $-\mathcal L_{V}$ admits a complete
orthonormal basis of eigenfunctions with Rayleigh variational
characterization, and for confining potentials
(e.g.\ uniformly convex $\psi$) eigenfunctions decay exponentially
\cite{Simon,ReedSimonIV}.

\medskip

In the free setting, let $M=W^\ast(X_1,\ldots,X_n)$ be generated by
self-adjoint variables , with (possibly Lipschitz) conjugate variables, and consider the conservative generator

\[
\Delta=\sum_{j=1}^n \partial_j^{\,*}\,\bar\partial_j.
\]

By Dabrowski \cite{Dab10,Dab14}, $\Delta$ generates a completely Dirichlet
form on $L^2(M,\tau)$, and its resolvents
$\eta_\alpha=\alpha(\alpha+\Delta)^{-1}$ are completely positive
contractions with $\mathrm{Ran}(\eta_\alpha)\subset\Dom(\bar\partial)$.

The spectral theory of $\Delta$ is largely open.  In the semicircular
case, the free Ornstein--Uhlenbeck generator is the number operator on
full Fock space and has pure point spectrum
\begin{equation}\label{eq:free-OU-spectrum}
\mathrm{Sp}(-\mathcal{L}_{V_0})=\{0,1,2,\ldots\},
\end{equation}
with eigenprojections onto the Wigner chaoses \cite{BS,Di2}.  This
algebraic (here we have polynomial eigenfunctions).
diagonalization implies compact resolvent.

At present, compactness of the resolvent is known only in the
semicircular and $q$-Gaussian cases ($|q|<1$), the latter via
ultracontractivity \cite{Bozejko} (e.g.\ free Gibbs states with uniformly
convex potentials), discreteness of the spectrum remains unknown.  A
non-commutative analogue of the classical criteria would suffice:
\begin{enumerate}
  \item \emph{Compact resolvent:} a free Rellich-type compact embedding
        $\Dom(\Delta^{1/2})=\Dom(\bar\partial)\hookrightarrow L^2(M,\tau)$
        would imply discrete spectrum.
  \item \emph{Ultracontractivity:} if $e^{-t\Delta}$ maps $L^2$ into
        $L^\infty(M):=M$ for some $t>0$, then $e^{-t\Delta}$ (and hence
        $(\alpha+\Delta)^{-1}$) is compact
        \cite{CiprianiSauvageot2003}.
\end{enumerate}

Extending discreteness to general
free Gibbs states remains an open problem.
We also note a related question, raised already at the AIM 2006 workshop on free analysis
\cite{Aim}, 
asking whether the semigroup $\exp(-t\,\Delta)$ is compact in the limit
$n\to\infty$.  Since compactness of $e^{-t\Delta}$ is equivalent to
discrete spectrum of $\Delta$, this question is closely related to the
free spectral problem discussed above.  Even in the simplest perturbative
case $P=X_i^2+t_i q_i(X_1,\ldots,X_m)$ nothing is known. Moreover, even in regimes where free monotone transport shows
that the limiting von Neumann algebra is isomorphic to a free group factor
(see, e.g., \cite{GS}), there is currently no known operator (unitary or otherwise) that intertwines or compares the associated Dirichlet forms in a way strong enough to transfer compactness or discreteness of the spectrum.

\subsection{Free Ricci curvature and almost co-associativity}\label{freericci}
In Riemannian geometry, curvature manifests itself through commutation identities.
At the geometric level, this is expressed by the fundamental relation for smooth vector fields $X,Y,Z\in \mathfrak{X}(M)\times \mathfrak{X}(M)\times \mathfrak{X}(M)$

\[
R(X,Y)Z \;=\; \nabla_X \nabla_Y Z \;-\; \nabla_Y \nabla_X Z \;-\; \nabla_{[X,Y]} Z ,
\]
where $\nabla$ is the Levi-Civita connection

In local coordinates $(x^1,\dots,x^n)$ on a open set $U\subset M$, let
\[
\partial_i := \frac{\partial}{\partial x^i}, \qquad \nabla_i := \nabla_{\partial_i}.
\]
Since coordinate vector fields commute, $[\partial_i,\partial_j] = 0$, the Riemann curvature tensor acts on a vector field $X = X^k \partial_k$ as
\[
[\nabla_i, \nabla_j] X^k = R^k{}_{\ell ij} X^\ell,
\]
where $R^k{}_{\ell ij}$ are the coordinate components of the curvature tensor.
This expresses the failure of covariant derivatives to commute in components.
 In the classical Bakry--Émery framework on $\mathbb{R}^n$, the
underlying Riemannian metric is flat (so the geometric Ricci tensor vanishes),
and all curvature information comes from the potential $V$ through its Hessian
$\nabla^2 V$.  This is encoded in the commutation identity

\[
[\nabla,\mathcal{L}_V] \;=\; -\,\nabla^2 V,
\]

where $\mathcal{L}_V = \Delta - \nabla V\cdot\nabla$ is the Langevin generator
with invariant measure $\mu\propto e^{-V}dx$.  Thus, in this setting, the
``curvature'' in the sense of Bakry--Émery is purely of potential type: it is
entirely captured by $\nabla^2V$, with no additional geometric Ricci
contribution.

\smallskip

It is therefore natural to ask what should play the role of \emph{free Ricci
curvature}.  Dabrowski's work~\cite{Dab14v2} suggests this direction, though
perhaps not intended at first to carry geometric meaning, since it was developed
primarily to obtain the domain and commutation control needed to get mild
solutions of free SPDEs.  In particular, his framework of \emph{almost
co-associative} derivations \cite[Definition~20]{Dab14v2} provides a natural
candidate, which may be viewed as a kind of \emph{coupled} derivations taking
values in the coarse correspondence, and which is significantly more general
than the free difference quotient situation considered in the present work.

\bigbreak

Before describing the setting more precisely, let us first introduce some
notation: for $U\in L^2(M\bar \otimes M^{op})$ and
$C\in M\bar\otimes M\bar\otimes M^{op}$, one defines
$U\# C\in L^2(M\bar\otimes M\bar\otimes M^{op})$ by extending the rule

\[
(a\otimes b)\#(d\otimes e\otimes f)
  := ad\otimes e\otimes bf,
\]

which reflects the natural bimodule structure.  Equivalently, since $U\#1$ is
the canonical embedding into the threefold tensor product, one may write
$U\# C = (U\#1)\,C$.

\smallskip

In the sense of Dabrowski~\cite{Dab14v2}, almost coassociativity requires a
precise compatibility between the domains of the derivations and their tensor
extensions.  Let

\[
\delta=(\delta_1,\ldots,\delta_n),\qquad
\tilde\delta=(\tilde\delta_{n+1},\ldots,\tilde\delta_M)
\]

be two families of real closable derivations
$\delta: \Dom(\delta)\to (L^2(M)\bar\otimes L^2(M^{op}))^{\oplus_n}$ and
$\tilde{\delta}: \Dom(\tilde{\delta})\to (L^2(M)\bar\otimes L^2(M^{op}))^{\oplus_n}$
satisfying the standing domain assumptions of~\cite[Definition~20]{Dab14v2}
(both $\Dom({\delta})$ and $\Dom(\tilde{\delta})$ weakly dense $*$-subalgebras
of $M$).  We say that $\delta$ is almost coassociative with respect to
$\tilde\delta$, with defect tensor

\[
C=(C^k_{i,j},C^k_{j,i})\in (M_0\bar\otimes M_0\bar\otimes M^{op}_0)^{2nM^2},
\]

if the following holds.  For every $x\in \Dom(\delta)\cap \Dom(\tilde\delta)$
such that $\delta_j(x)\in \Dom(\overline{\tilde\delta\otimes 1})\cap
\Dom(\overline{1\otimes\tilde\delta})$ for all $j=1,\ldots,n$, and similarly
$\tilde\delta_i(x)\in \Dom(\overline{\delta\otimes 1})\cap
\Dom(\overline{1\otimes\delta})$ for all $i=1,\ldots,M$, the co-associator
identities

\[
(\delta_j\otimes 1)\circ\tilde\delta_i(x)
-
(1\otimes\tilde\delta_i)\circ\delta_j(x)
=
\sum_{k=N+1}^M \tilde\delta_k(x)\# C^k_{j,i},
\]

and

\[
(\tilde\delta_i\otimes 1)\circ\delta_j(x)
-
(1\otimes\delta_j)\circ\tilde\delta_i(x)
=
\sum_{k=1}^N \delta_k(x)\# C^k_{i,j},
\]

hold.

\medskip

If $\delta=\tilde\delta$, we simply say that $\delta$ is
\emph{almost co-associative}.  Thus the \emph{co-associator} of $\delta$ and
$\tilde\delta$ is expressed as a controlled linear combination of the
gradients $\tilde\delta_k(x)$, with coefficients in
$M\bar\otimes M\bar\otimes M^{op}$.  This is directly analogous to the
classical fact that the commutator of vector fields is a linear combination of
vector fields with scalar coefficients; in this sense, almost coassociativity
provides a non-commutative \emph{Lie algebra} structure on derivations.  The
defect tensor $C$ plays the role of an operator-valued curvature for this
co-derivation geometry.

\smallskip

One of the main technical achievements of~\cite{Dab14v2} is that, under
additional Sobolev-type regularity and suitable ($\Gamma_1$--type) bounds,
almost co-associativity allows one to derive an \emph{almost commutation}
relation between the gradient and the associated second-order operator.  More
precisely, if $\Delta=\delta^\ast\bar\delta$ is a generator in divergence form
built from a family of derivations $\delta$, and if $\tilde\delta$ is another
family almost coassociative with respect to $\delta$, then there exists a
bounded operator

\[
H : (L^2(M)\bar\otimes L^2(M^{op}))^{\oplus_n} \longrightarrow
    (L^2(M)\bar\otimes L^2(M^{op}))^{\oplus_n}
\]

such that

\[
\Delta^{\otimes}\circ\tilde\delta(x)
-
\tilde\delta\circ\Delta(x)
=
H(\tilde\delta(x)),
\qquad x\in \Dom(\tilde\delta\circ\Delta),
\]

where

\[
\Delta^{\otimes}
  = \overline{\Delta\otimes\mathrm{id}+\mathrm{id}\otimes\Delta}
\]

is the natural (tensor extension) Laplacian on
$(L^2(M)\bar\otimes L^2(M^{op}))^{\oplus_n}$.  This identity is a free analogue
of the classical commutation relation evoked before, with the bounded operator
$H$ playing the role of a curvature operator acting on gradients.  The
coefficients of $H$ are extremely complicated and expressed explicitly in terms
of the defect tensor $C$, the adjoints $\delta_i^\ast(1\otimes 1)$, and the
bounded maps $(1\otimes\tau)\circ\delta_i$
(see~\cite[Section~2.1.5]{Dab14v2} and in particular~\cite[Corollary~32]{Dab14v2}).

\smallskip

The present work focuses on the case where the underlying non-commutative space
is the free analogue of Euclidean space $(\mathbb{R}^n,\lvert\,\cdot\,\rvert)$,
so there is no geometric Ricci curvature built into the background von Neumann
algebra $M=W^\ast(X_1,\ldots,X_n)$.  In this flat situation, the free
difference quotients are exactly co-associative, and therefore the defect
tensor $C$ vanishes.  Consequently, all curvature comes from the potential,
through the conjugate variables $\xi$ and their Jacobian $\mathscr{J}\xi$, just
as in the classical Bakry--Émery theory on $\mathbb{R}^n$ where $\mathrm{Ric}=0$
and the only curvature is $\nabla^2V$.

\smallskip

By contrast, in the full almost coassociative framework of Dabrowski, a
non-trivial defect tensor $C$ provides a natural \emph{free Ricci
curvature}, independent of the potential.  In that more general setting, the
commutation relation

\[
\Delta^{\otimes}\tilde\delta - \tilde\delta\Delta = H(\tilde\delta)
\]

contains contributions both from the potential (via $\mathscr{J}\xi$) and from
the geometric defect $C$, in direct analogy with the decomposition
$\mathrm{Ric}_V = \mathrm{Ric} + \nabla^2V$ in the classical Bakry--Émery
identity.  Extending the analytic part of the present work (curvature--dimension
estimates, Poincaré inequalities, and rigidity phenomena) to this more general
setting would therefore lead to a fully non-commutative curvature--dimension
theory in which both geometric curvature (encoded by $C$) and potential
curvature (encoded by $\mathscr{J}\xi$) contribute equally to the curvature
term.

\section{Acknowledgments}
This work was supported by ISF grant 2574/24 and NSF-BSF grant 2022707.
C.\,P.~Diez also acknowledges support from the Ministry of Research, Innovation and Digitalization (Romania), grant CF-194-PNRR-III-C9-2023. The author would like to express his gratitude to Dr.~David Jekel for many helpful comments and discussions, which in particular led to a simplification of the main theorem. He is also indebted to Professor Liran Rotem for encouragement and insightful remarks during the preparation of this work.

\end{document}